\documentclass[12pt]{amsart}
\usepackage{mathrsfs}
\usepackage{amsfonts, amssymb, amsmath, amscd}
\usepackage{enumerate}
\usepackage{graphicx}
\usepackage{tikz}
\usepackage{pgfplots}
\usetikzlibrary{shapes}
\usetikzlibrary{plotmarks}
\usepackage{stmaryrd}
\usepackage{booktabs}
\usepackage{mathtools}
\usepackage{young}
\usepackage{color,soul}
\usepackage{MnSymbol,wasysym}
\usepackage{calligra,mathrsfs}
\usetikzlibrary{matrix,arrows}

\usepackage[left=.9truein,right=.9truein,bottom=.9truein,top=.9truein]{geometry}


\makeatletter
\def\@tocline#1#2#3#4#5#6#7{\relax
  \ifnum #1>\c@tocdepth 
  \else
    \par \addpenalty\@secpenalty\addvspace{#2}%
    \begingroup \hyphenpenalty\@M
    \@ifempty{#4}{%
      \@tempdima\csname r@tocindent\number#1\endcsname\relax
    }{%
      \@tempdima#4\relax
    }%
    \parindent\z@ \leftskip#3\relax \advance\leftskip\@tempdima\relax
    \rightskip\@pnumwidth plus1em \parfillskip-\@pnumwidth
    #5\leavevmode\hskip-\@tempdima #6\relax
    \dotfill\hbox to\@pnumwidth{\@tocpagenum{#7}}\par
    \nobreak
    \endgroup
  \fi}
\makeatother

\let\oldtocsection=\tocsection
\let\oldtocsubsection=\tocsubsection

\renewcommand{\tocsection}[2]{\hspace{0em}\oldtocsection{#1}{#2}}
\renewcommand{\tocsubsection}[2]{\hspace{1.25em}\oldtocsubsection{#1}{#2}}

\usepackage{hyperref}



\newcommand*\rfrac[2]{{}^{#1}\!/_{#2}}

\newcommand{\ext}{\textnormal{ext}}
\newcommand{\Hom}{\textnormal{Hom}}

\newcommand{\sheafhom}{\mathscr{H}\text{\kern -3pt {\calligra\large om}}\,}
\newcommand{\sheafext}{\mathscr{E}\text{\kern -3pt {\calligra\large xt}}\,}

\newcommand{\Ext}{\textnormal{Ext}}

\newcommand{\ch}{\textnormal{ch}}

\newcommand{\ZZ}{\mathbb Z}
\newcommand{\QQ}{\mathbb Q}
\newcommand{\RR}{\mathbb R}
\newcommand{\CC}{\mathbb C}

\newcommand{\PP}{\mathbb P}

\newcommand{\OO}{\mathcal O}

\newcommand{\EEE}{\mathcal E}
\newcommand{\UU}{\mathcal U}
\newcommand{\LL}{\mathcal L}

\newcommand{\FFF}{\mathcal{F}}

\newcommand{\QS}{\mathbb{P}^1\times \mathbb{P}^1}
\newcommand{\gbar}{\overline{\gamma}}
\newcommand{\MMM}{M(\xi)}
\newcommand{\MS}{M^{s}(\xi)}

\newcommand{\osum}{\bigoplus}
\newcommand{\isom}{\cong}

\newcommand{\pic}{\textnormal{Pic}}

\newcommand{\ns}{\textnormal{NS}}

\newtheorem{theo}{Theorem}[section]
\newtheorem{theorem}[theo]{Theorem}
\newtheorem*{theorem*}{Theorem}
\newtheorem{thm}[theo]{Theorem}
\newtheorem*{thm*}{Theorem}
\newtheorem{proposition}[theo]{Proposition}
\newtheorem*{proposition*}{Proposition}
\newtheorem{prop}[theo]{Proposition}
\newtheorem*{prop*}{Proposition}

\newtheorem*{remark*}{Remark}
\newtheorem{lemma}[theo]{Lemma}
\newtheorem*{lemma*}{Lemma}
\newtheorem{cor}[theo]{Corollary}
\newtheorem*{cor*}{Corollary}

\newtheorem*{claim*}{Claim}

\newtheorem*{details*}{Details}

\newtheorem*{recall*}{Recall}
\newtheorem{ass}[theo]{Assumption}
\newtheorem*{ass*}{Assumption}
\newtheorem{conj}[theo]{Conjecture}
\newtheorem*{conj*}{Conjecture}

\newtheorem*{intprob*}{The Interpolation Problem}

\theoremstyle{definition}
\newtheorem{definition}[theo]{Definition}
\newtheorem*{definition*}{Definition}
\newtheorem{deff}[theo]{Definition}
\newtheorem*{deff*}{Definition}

\newtheorem*{problem*}{Problem}

\newtheorem*{prob*}{Problem}

\newcommand{\fto}[1]{\stackrel{#1}{\to}}
\begin{document}
\title{The Effective Cone of Moduli Spaces of Sheaves on a Smooth Quadric Surface}
\author{Tim Ryan}
\address{Dept. of Mathematics, Statistics, and Computer Science, University of Illinois at Chicago, Chicago, IL 60607}
\email{tryan8@uic.edu}
\maketitle
\begin{abstract}
Let $\xi$ be a stable Chern character on $\QS$, and let $M(\xi)$ be the moduli space of Gieseker semistable sheaves on $\QS$ with Chern character $\xi$.
In this paper, we provide an approach to computing the effective cone of $M(\xi)$.
We find Brill-Noether divisors spanning extremal rays of the effective cone using resolutions of the general elements of $M(\xi)$ which are found using the machinery of exceptional bundles.
We use this approach to provide many examples of extremal rays in these effective cones.
In particular, we completely compute the effective cone of the first fifteen Hilbert schemes of points on $\QS$.
\end{abstract}
\setcounter{tocdepth}{2}
\section{Introduction}
\label{sec:introduction}
In this paper, we provide an approach to computing extremal rays of the effective cone of moduli spaces of sheaves on $\QS$.
In particular, we show that this approach succeeds in computing the entire effective cone on the first fifteen Hilbert schemes of points.

The effective cone of a scheme is an important invariant which controls much of the geometry of the scheme \cite{L:L04}.
For Mori dream spaces, it determines all of the birational contractions of the space \cite{HK:HK00}.
However, in general, determining the effective cone of a scheme is a very difficult question.
There has been progress computing the effective cone for certain moduli spaces. 

Moduli spaces of sheaves (on a fixed surface) are one kind of moduli space that has been extensively studied (e.g. \cite{BCZ:BCZ16}, \cite{BHLRSWZ:BHLRSWZTBD}, \cite{CC:CC15}, \cite{CH4:CH15}, \cite{DN:DN89}, \cite{Fo:Fo73}, \cite{Gi:Gi77}, \cite{LQ:LQ94}, \cite{Ma:Ma77}, \cite{Ma3:Ma75}, \cite{MO:MO07}, \cite{MO2:MO08}, \cite{MW:MW97}, \cite{Ta:Ta72}, \cite{Y4:Y12}).
In this setting, the geometry of the underlying variety can be used to study the moduli space.
In the past decade, 
Bridgeland stability has motivated a program to compute the effective cones of these moduli spaces by corresponding the edge of the effective cone with the \textit{collapsing wall} of Bridgeland stability.
The recent advances in Bridgeland stabilty (e.g. \cite{AB:AB13}, \cite{B:B07}, \cite{B2:B208}, \cite{BBMT:BBMT14}, \cite{BeMa:BeMa15}, \cite{BM:BM14}, \cite{BMS:BMSTBD}, \cite{BMT:BMT14}, \cite{CHP:CHP16}, \cite{LoQi:LoQi14}, \cite{Mac:Mac14}, \cite{Maci2:Maci}, \cite{MM:MM13}, \cite{MP:MP13}, \cite{MS:MS16}, \cite{Nu2:Nu14:}, \cite{S:S14}, \cite{To:To14}, \cite{To2:To14}, \cite{To3:To14-3}, \cite{Tr:Tr15}, \cite{Y4:Y12}, \cite{Y5:Y14}) have helped this approach be successful in general on \textit{K3 surfaces} \cite{BM2:BM14-2}, \textit{Enriques surfaces} \cite{Nu:NuTBD}, \textit{Abelian surfaces} \cite{Y2:Y01}, and $\PP^2$ \cite{CHW:CHW14}.

The proof in the last case varies greatly from the proofs in the other cases as it is a surface of negative Kodaira dimension.
More generally, there has been a lot of work on $\PP^2$ (e.g. \cite{ABCH:ABCH13}, \cite{BMW:BMW14}, \cite{CH:CHTBD}, \cite{CH2:CH14}, \cite{CH3:CH14-2}, \cite{DLP:DLP85}, \cite{Hu:Hu16}, \cite{Hu2:Hu13}, \cite{LZ:LZ13}, \cite{LZ2:LZTBD}, \cite{Wo:Wo13}). 
Although much of this work has been extended to more rational and ruled surfaces (e.g. \cite{A:ATBD}, \cite{Ba:Ba87}, \cite{Go:Go96}, \cite{K2:K94}, \cite{Mo:Mo13}, \cite{Q:Q92}), no general method to compute the entire effective cone of a moduli space of sheaves on $\QS$ has been given.
This is because the proof in \cite{CHW:CHW14} relies heavily on properties that are unique to $\PP^2$.

This paper provides the general framework to potentially extend the results of \cite{CHW:CHW14} to del Pezzo and Hirzebruch surfaces and explicitly works out the framework on $\QS$.
Under some additional hypotheses, this framework gives a method to compute the entire effective cone of moduli spaces of sheaves on $\QS$.
The increased ranks of the Picard group and the derived category in the case of $\QS$ compared to $\PP^2$ make the proofs and results significantly harder to obtain.

These difficulties force us to add two new ingredients to the method.
The first new addition is putting the choice of an exceptional collection in the context of the work of Rudakov et al. on coils and helices (e.g. \cite{R:R89}, \cite{NZ:NZ90}, \cite{G:G89}).
Certain special properties of exceptional collections on $\PP^2$ that were used are no longer needed once the choice is put in these terms.
The second addition is providing a way to link neighboring extremal rays to show that there are no missing extremal rays in between them.
This addition is needed as the Picard rank of the moduli spaces are now higher than two, and it will be essential for expanding these results to other surfaces.

Let $\xi$ be a Chern character of postive integer rank on $\QS$.
Then there is a nonempty moduli space $M(\xi)$ that parametrizes $S$-equivalence classes of semistable sheaves with that Chern character on $\QS$ iff $\xi$ satisfies a set of Bogomolov type inequalities given by Rudakov in \cite{R2:R94}.
It is an irreducible \cite{Wa:Wa98}, normal \cite{Wa:Wa98}, projective variety \cite{Ma2:Ma78}.
We show that these spaces are $\QQ$-factorial [Prop. \ref{prop:Q-factorial}] and, furthermore, are Mori dream spaces [Thm. \ref{thm:MDS}].

We construct effective Brill-Noether divisors of the form
$$D_V = \{U \in \MMM : h^1(U \otimes V) \neq 0\}.$$
We create an algorithmic method to produce these divisors.
Conjecturally, this method produces a set of divisors spanning the effective cone.

\begin{conj*}
The method laid out in this paper produces a set of effective divisors spanning the effective cone for $M(\xi)$ for all $\xi$ above Rudakov's surface.
\end{conj*}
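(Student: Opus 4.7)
The plan is to establish the conjecture via the standard Bridgeland-theoretic strategy pioneered on $\PP^2$ in \cite{CHW:CHW14}, adapted to account for the higher Picard rank and the more intricate combinatorics of exceptional collections on $\QS$. First, I would identify each extremal ray of $\Eff(\MMM)$ with a numerical \emph{collapsing wall} in a slice of the Bridgeland stability manifold: the wall at which the generic object $U$ of $\MMM$ becomes strictly semistable along a destabilizing short exact sequence governed by an exceptional (or at least rigid) bundle $V$ from the coil chosen by the algorithm. The Brill-Noether divisor $D_V$ produced by the algorithm is numerically proportional to the wall class (its class is determined by $\chi(U \otimes V)=0$ and the vanishing jumps at exactly the wall), and so spans the corresponding extremal ray.

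Second, once each ``visible'' extremal ray has such a divisorial witness, one must rule out hypothetical extremal rays \emph{between} neighboring divisors produced by the algorithm. This is where the new linking step is essential. Given two consecutive outputs $D_{V_1}$ and $D_{V_2}$ of the algorithm, I would argue that any intermediate extremal ray would correspond to a Bridgeland wall strictly between the walls of $V_1$ and $V_2$, and therefore to a controlling exceptional or orthogonal pair whose Chern character is sandwiched — in the sense of Rudakov's order — between those of $V_1$ and $V_2$. The helix/coil machinery of \cite{R:R89,NZ:NZ90,G:G89} together with the orthogonality conditions built into the algorithm would then force that intermediate pair to already have been produced, a contradiction. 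This step is essentially combinatorial once the wall-to-divisor dictionary is in place.

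The principal obstacle is the first step, and more precisely the following rigidity issue. On $\PP^2$, the numerical Grothendieck group has rank $3$, so a wall determines the destabilizing class up to a one-parameter family and Dr\'ezet-Le Potier's classification pins down the exceptional candidate. On $\QS$ the lattice has rank $4$, and a wall imposes only a rank-two condition, leaving a pencil of potential destabilizers. Proving that the \emph{actual} destabilizer at the collapsing wall is the exceptional bundle selected by the algorithm — rather than some non-exceptional semistable object of the same slope — requires refined Bogomolov-type inequalities above Rudakov's surface, exploiting $\QQ$-factoriality and the Mori dream space property already established in the paper to localize the wall numerically, and then an $\Ext$-vanishing analysis using the coil structure to force the destabilizer to be rigid. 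I expect this rigidity theorem for destabilizers to be the crux; granted it, the remainder of the argument is a bookkeeping adaptation of the $\PP^2$ proof, with the linking step filling the gaps left by higher Picard rank.
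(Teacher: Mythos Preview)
The statement you are attempting to prove is explicitly labeled a \emph{conjecture} in the paper, not a theorem; the paper offers no proof of it. The author presents the algorithm, verifies that it computes the full effective cone for $\left(\QS\right)^{[n]}$ with $n\leq 16$ and for various infinite families, and records the general statement as Conjecture~4.2 (equivalently, the conjecture in the introduction). So there is no ``paper's own proof'' to compare against.

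Your proposal is therefore not a proof but a strategy for attacking an open problem, and you are candid about this: you identify the rigidity of the destabilizer at the collapsing wall as ``the crux'' and do not resolve it. That is exactly the missing piece. On $\PP^2$ the rank-$3$ lattice and the Dr\'ezet--Le~Potier classification make the destabilizer essentially unique; on $\QS$ the rank-$4$ lattice leaves a one-parameter family of candidate destabilizing classes, and nothing in your outline forces the actual destabilizer to be the exceptional bundle the algorithm selects. The ``refined Bogomolov-type inequalities'' and ``$\Ext$-vanishing analysis'' you invoke are precisely what would need to be proved, and neither the paper nor the existing literature supplies them in the required generality. Likewise, your linking step assumes that every intermediate wall is governed by an exceptional or orthogonal pair captured by the helix combinatorics; the paper's own Conjecture~4.2 (that every controlling exceptional in an extremal pair lies in two extremal pairs) is exactly the combinatorial statement needed here, and it too is open. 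In short, your plan is a reasonable outline of what a proof might look like, but the two load-bearing steps remain conjectural.
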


One reason for this conjecture is that the method computes the entire effective cone of the first fifteen Hilbert schemes of points on $\QS$ (which is as many as we applied it to).
In the last section of the paper, we explicitly compute the effective cone of $\left(\QS\right)^{\lbrack n\rbrack}$ for $n \leq 16$ as well as several instances of types of extremal rays that show up in infinite sequences of $n$, and we give a rank two example.
Even if this method fails to fully compute the effective cone of every moduli space, it does give a method to produce effective divisors on these moduli spaces.

The proofs of the steps of the method follow from constructing birational maps or Fano fibrations to simpler, Picard rank one, spaces and analyzing them to find our extremal divisors.
Given a birational map or Fano fibration, giving an divisor on an edge of the effective cone follows directly.
The difficult part of the process is constructing the maps.

The moduli spaces that we map to are moduli spaces of Kronecker modules, $Kr_V(m,n)$.
The way we construct the map is to find a resolution of the general element of $M(\xi)$ containing a Kronecker module and then to forget the rest of the resolution.
The key then is to find resolutions of the general element of the moduli space that contain Kronecker modules.
On $\QS$, we have a powerful tool for finding resolutions of sheaves in the form of a generalized Beilinson spectral sequence \cite{G:G89}.
Using this method for finding a resolution will make it clear that the extremal divisors we construct are Brill-Noether divisors as they will be defined in terms of the jumping of ranks of cohomology groups appearing in the spectral sequence.
Using that spectral sequence, finding resolutions with Kronecker modules is reduced to finding the right collections of exceptional bundles spanning the derived category.

We find the elements of the right collections by studying Rudakov's classification of stable bundles over the hyperbola of Chern characters $\zeta$ with the properties 
$$\chi(\zeta^*,\xi) = 0 \textnormal{ and } \Delta(\zeta) = \frac{1}{2}.$$ 
Rudakov's necessary and sufficient inequalities for a Chern character to be stable each came from an exceptional bundle on $\QS$, and the right collections of exceptional bundles are determined by which exceptional bundles have the sharpest inequalities over this curve.
Say that $(E_\alpha,E_\beta)$ is an exceptional pair of bundles that have the sharpest inequalities over that curve.
Then the resolution we get for the general $U \in \MMM$ might look like
$$0 \to E_\alpha^*(K)^{m_3} \osum F_{0}^{*m_2} \to F_{-1}^{*m_1} \osum E_\beta^{*m_0} \to U \to 0.$$

Using this resolution, we get the maps we need.
There are several cases to be dealt with.
On $\PP^2$, there was only two cases.
The new cases are a phenomenon that will persist on other surfaces and are not unique to $\QS$.

We summarize the approach in the most common case.
The resolution of the general object of the moduli space in this case will look like the example resolution above.
Then the resolution has four objects and four maps.
The map $W: F_{0}^{*m_2} \to F_{-1}^{*m_1}$ gives the required Kronecker module. 
We map to the Kronecker moduli space corresponding to it,  $f: \MMM \dashrightarrow Kr_V(m,n)$.
Constructing the Brill-Noether divisor in this case is slightly tricky because the bundle whose corresponding divisor spans the extremal ray is not obviously cohomologically orthogonal to the general object of the moduli space.
That orthogonality is established using properties of the Kronecker modules in the resolution of a bundle whose corresponding divisor spans the extremal ray and in the resolution of the general object.

\subsection{Organization}
In Sec. \ref{sec:prelim}, we extensively lay out the necessary background and prove two properties of the moduli spaces we want to study.
In Sec. \ref{sec:excpair}, we define primary orthogonal Chern characters via controlling exceptional pairs.
In Sec. \ref{sec: Beilinson Spec seq}, we use the controlling pairs and a generalized Beilinson spectral sequence to resolve the general object of our moduli space which constructs effective divisors on our space.
In Sec. \ref{sec: The Kron. Fib.}, we use these resolutions to construct maps from our moduli space to spaces of Kronecker modules that provide the dual moving curves we need.
In Sec. \ref{sec: primary extremal rays of the effective cone}, we use these results to compute extremal rays of the effective cone of $M(\xi)$.
Finally, in Sec. \ref{sec: examples}, we compute the effective cone of 
for $n\leq 16$, provide some recurring examples of types of corners, and work out a rank two example.

\section{Preliminaries}
\label{sec:prelim}
In this section, we will discuss all preliminary material needed to understand the following sections. 
We base our discussion of the general preliminaries on sections in \cite{HuL:HuL10} and \cite{LP:LP97}.
For the subsections more specific to $\QS$, we also draw on \cite{R:R89} and \cite{R2:R94}.

In this paper, all sheaves will be coherent torsion free sheaves.
Other than Prop. \ref{prop:Q-factorial} and Thm. \ref{thm:MDS}, we will work exclusively on $\QS$ and so will drop that from labels as convenient.
Let $E$ be a sheaf with Chern character $\xi = \left( \ch_0, \ch_1, \ch_2 \right)$.


\subsection{Slopes and the discriminant}
\label{subsec:slopes and the discriminant}
Recall that for a locally free sheaf, we have $$\ch_0(E) = \textnormal{rank}(E) = r(E).$$

Using this equality, we define the \textit{slope} and \textit{discriminant} of $E$ to be 
$$\mu(E) = \frac{\ch_1(E)}{\ch_0(E)} \textnormal{ and } \Delta(E) = \frac{1}{2}\mu(E)^2-\frac{\ch _2(E)}{\ch_0(E)}.$$
The slope and discriminant are more convenient than the Chern character for us as they have the property that 
$$\mu(E \otimes F) = \mu(E) +\mu(F) \textnormal{ and } \Delta(E \otimes F) = \Delta(E) +\Delta(F).$$ 
We should note that these notions are easily extended to Chern characters in $K\left(\QS\right) \otimes \RR$.

On a Picard rank one variety, the slope is a generalization of the degree of a line bundle to higher rank vector bundles.
On higher Picard rank varieties, we can think of the slope as a generalization of a multi-degree that carries the information of the degree for every choice of embedding.
Sometimes we would like an analog to the degree with respect to a specific embedding so we give the following definition.
The \textit{slope with respect to an ample divisor $H$} is $$\mu_H(E) = \mu(E) \cdot H.$$
We also call this the \textit{$H$-Slope} of $E$.
The two ample divisors that we will use in this paper are $\OO_{\QS}(1,1)$ and $\OO_{\QS}(1,2)$ for which the slopes will be denoted $\mu_{1,1}$ and $\mu_{1,2}$, respectively.

We want to write the Riemann-Roch formula in terms of these invariants so let's recall the definition of the (reduced) Hilbert polynomial.

\begin{deff}
\label{def:Hilb Poly}
The \textit{Hilbert polynomial of a sheaf $\FFF$ with respect to an ample line bundle $H$} is $$P_\FFF(k) = \chi\left( \FFF(k) \right) = \frac{\alpha_d}{d!}k^d+\cdots+\alpha_1 k+\alpha_0$$ where $\FFF(k) = \FFF \otimes H^{\otimes k}$, where the dimension of $\FFF$ is $d$, and where we think of this as a polynomial in the variable $k$.
\end{deff}

\begin{deff}
\label{def:reduced hilb poly}
The \textit{reduced Hilbert polynomial} of a sheaf $\FFF$ with respect to an ample line bundle $H$ is defined to be $$p_\FFF(k) = \frac{P_\FFF(k)}{\alpha_d}$$
where $d$ is the dimension of $\FFF$.
\end{deff}
The Hilbert polynomial, unlike the individual cohomology groups it sums over, is a numerical object that is entirely determined by the Chern character of the sheaf and the Chern character of the ample line bundle.

When $\ch_0(E)>0$, we can write the Riemann-Roch formula as
$$\chi(E)=r\left( P(\mu)-\Delta \right),$$
where $P(\mu)$ is the Hilbert polynomial of $\OO$, 
$$P(m,n) = \frac{1}{2} (m,n)^2 +(1,1) \cdot (m,n) +1 = mn+m+n+1 =(m+1)(n+1).$$

\subsubsection{Classical Stability Conditions}
\label{subsubsec: Classical Stability Conditions}

Using our notations of slope and the reduced Hilbert polynomial,
we can now define the different classical notions of stability that will we need.

\begin{deff}
\label{def:slope stability}
A sheaf $\FFF$ is \textit{slope (semi-)stable} with respect to an ample line bundle $H$ if for all proper subsheaves $\FFF' \subset \FFF$, $$\mu_H(\FFF') \left( \leq \right) < \mu_H(\FFF) .$$ 
\end{deff}

A stronger notation of stability is the notion of \textit{Gieseker stability}, which is also known as $\gamma$ stability.

\begin{deff}
A sheaf $\FFF$ is \textit{Gieseker (semi-)stable} with respect to an ample divisor $H$ if for all proper subsheaves $\FFF' \subset \FFF$, $p_{\FFF'} \left( \leq \right) < p_\FFF$ where the polynomials are compared for all sufficiently large input values.
We will also call this $\gamma$ (semi-)stability.
\end{deff}
The ordering on the polynomials could also have been phrased as the lexiographic ordering on their coefficients starting from the highest degree term's coefficient and working down.
The condition on the Hilbert polynomial is equivalent on surfaces to requiring $\mu_H(\FFF') \leq \mu_H(\FFF)$ and, in the case of equality, $\Delta(\FFF') \left(\geq\right) > \Delta(\FFF)$.

These two notations of stability and semi-stability are related by a string of implications that seems slightly odd at first, but becomes clear using this last equivalence: $$\textnormal{slope stable} \rightarrow \textnormal{Gieseker stable} \rightarrow \textnormal{Gieseker semi-stable} \rightarrow \textnormal{slope semi-stable}.$$

As we will be focused on $\QS$, one additional notion of stability will be relevant.
\begin{deff}
A sheaf $\FFF$ is \textit{$\overline{\gamma}$ (semi-)stable} if it is Gieseker semi-stable with respect to $\OO(1,1)$ and for all proper subsheaves $\FFF' \subset \FFF$, if $\mu_{1,1}\left(\FFF'\right) = \mu_{1,1}\left(\FFF\right)$ and  $\Delta\left(\FFF'\right) = \Delta\left(\FFF\right)$, then $\mu_{1,2}\left(\FFF'\right) \left(\leq \right)< \mu_{1,2}\left(\FFF\right)$.
\end{deff}

Again we have implications:  $$\textnormal{slope stable} \rightarrow \textnormal{Gieseker stable} \rightarrow \overline{\gamma}\textnormal{ stable}$$ 
$$\rightarrow \overline{\gamma}\textnormal{ semi-stable} \rightarrow \textnormal{Gieseker semi-stable} \rightarrow \textnormal{slope semi-stable}.$$
Because Gieseker stability generalizes to all varieties, it might seem odd to add a third, very variety specific, condition.
However, adding this third condition allows $\overline{\gamma}$ stability to order all Chern characters in a way that was not possible for slope or Gieseker stability.
This order is possible with $\overline{\gamma}$ stability because it has three conditions that can distinguish the three variables for a Chern character on $\QS$: the two coordinates of $c_1$ and the coordinate of $c_2$.



\subsection{Exceptional Bundles}
\label{subsec: exceptional bundles}
In this subsection, we start with a relative version of Euler characteristic.
We will eventually see that sheaves $E$ with relative Euler characteristic $\chi(E,E) = 1$ will control the geometry of our moduli spaces. 

\subsubsection{The Relative Euler Characteristic}
\label{subsubsec: rel Euler char}

The \textit{relative Euler characteristic} of two sheaves on a variety $X$ of dimension $n$ is
$$\chi(E,F) = \sum_{i=0}^n (-1)^i \textnormal{ext}^i(E,F)$$
where $\textnormal{ext}^i(E,F) = \dim(\Ext^i(E,F))$.
For locally free sheaves, we can equivalently define it by the formula
$$\chi(E,F) = \chi(E^*\otimes F) = \sum_{i=0}^n (-1)^i h^i(E^*\otimes F).$$
Restricting to the case where $n=2$, we again write out Hirzebruch-Riemann-Roch in order to explicitly compute this as 
$$\chi(E,F) = r(E) r(F) \left( P\left( \mu(E) -\mu(F) \right) -\Delta(E)- \Delta(F) \right)$$
where $P(x,y) = (x+1)(y+1)$ is the Euler characteristic of  $\OO_{\QS}(x,y)$.

Using the alternate definition of relative Euler characteristic, we define a bilinear pairing on \\$K\left( \QS\right)$ by
$$\left( E,F\right) = \chi(E^*,F) = \chi(F^*,E) = \chi(E \otimes F).$$
Then we define the \textit{orthogonal complement} of $E$, denoted $\ch(E)^\perp$, to be all bundles $F$ such that $(E,F) = 0$.
Note that the pairing is symmetric 
so the orthogonal complement does not depend on whether $E$ is the first or second element in the pairing.


The $\Ext$ groups we used to define the relative Euler characteristic also allow us to describe a set of vector bundles that control the geometry of $\QS$.
\begin{deff}A sheaf $E$ is \textit{exceptional} if $\Hom(E,E) = \CC$ and $\Ext^i(E,E)=0$ for all $i>0$.\end{deff}
We say a Chern character (slope) is \textit{exceptional} if there is an exceptional sheaf with that character (slope).
The prototypical exceptional sheaves are the line bundles, $\OO(a,b)$.
Many of the properties of line bundles pass to exceptional sheaves on $\QS$.
We list some of these properties without repeating the proofs.
Their proofs are in the previously mentioned paper by Gorodentsev \cite{G:G89}.
\begin{prop}On $\QS$, we have the following results.
\leavevmode
\begin{list}{$\bullet$}{}
\item Every exceptional sheaf is a vector bundle.
\item There exists a unique exceptional bundle for each exceptional slope.
\item The two coordinates of the first Chern class are coprime with the rank of $E$.
\item The denominator of $\mu_{1,1}(E)$ is the rank of $E$.
\item Exceptional bundles are stable with respect to $\OO(1,1)$.
\end{list}
\end{prop}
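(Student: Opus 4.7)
The plan is to establish the five properties sequentially, using as the main structural inputs the defining vanishings $\Hom(E,E) = \CC$ and $\Ext^i(E,E) = 0$ for $i \geq 1$, the Riemann--Roch identity $\chi(E,E) = r^2(1 - 2\Delta) = 1$, and Serre duality with $K_{\QS} = \OO(-2,-2)$.

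For local freeness, I would first eliminate torsion: if $T \subset E$ is the maximal torsion subsheaf, then applying $\Hom(-,E)$ and $\Hom(E,-)$ to the short exact sequence $0 \to T \to E \to E/T \to 0$ and using the $\Ext$-vanishings together with Serre duality would force $T = 0$. Since $\QS$ is a smooth surface, any torsion-free sheaf has cokernel $Q := E^{**}/E$ supported in dimension zero; applying $\Hom(E, -)$ to $0 \to E \to E^{**} \to Q \to 0$ and combining $\Ext^1(E,E) = 0$ with the simplicity of $E$ should force $Q = 0$, so $E$ is reflexive and hence locally free on the surface.

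For coprimality and the denominator of $\mu_{1,1}$, one expands Riemann--Roch using $r^2(1 - 2\Delta) = 1$. Writing $c_1(E) = (a,b)$, this becomes the integrality constraint $2 r\, \ch_2(E) = 2ab - r^2 + 1$, which modulo $r$ reads $2ab \equiv -1$. Any common prime factor of $r$ with $2$, $a$, or $b$ would contradict this, so $r$ is odd and $\gcd(r,a) = \gcd(r,b) = 1$. The statement $\gcd(a+b, r) = 1$ requires a finer arithmetic argument coupling this with the uniqueness property (proved in parallel), since $(a+b)/r$ being in lowest terms does not follow from the individual coprimalities alone. For stability with respect to $\OO(1,1)$, suppose $F \subset E$ were destabilizing: the coprimality of $a+b$ with $r$ rules out equality $\mu_{1,1}(F) = \mu_{1,1}(E)$ for any proper subsheaf of smaller rank, so any destabilization must be strict; a Harder--Narasimhan argument combined with simplicity $\Hom(E,E) = \CC$ then rules this out, since a nontrivial Harder--Narasimhan filtration would produce additional endomorphisms.

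Finally, for uniqueness, if $E$ and $F$ are exceptional of the same slope, the coprimality and denominator results pin down equal ranks and first Chern classes, so $\chi(E,F) = 1$ by Riemann--Roch; Serre duality rules out $\Ext^2(E,F)$ using the stability established in the previous step, giving a nonzero $\Hom(E,F)$, and stability then forces it to be an isomorphism. The main obstacle I anticipate is ruling out the codimension-two quotient $Q$ in the double-dual sequence, which requires a careful interplay between $\Ext^2(E,E) = 0$ via Serre duality and the arithmetic constraints from $\chi(E,E) = 1$; the denominator computation is a second delicate point, appearing to need input beyond the pure $\chi$ identity and to couple tightly with the uniqueness statement.
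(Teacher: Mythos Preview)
The paper does not give its own proof of this proposition; it explicitly defers all five items to Gorodentsev \cite{G:G89}. So there is no in-paper argument to compare against, and your proposal is an attempt to reconstruct what Gorodentsev does.

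Your sketch contains one genuine error and two real gaps that you have partly flagged yourself. The error is in the stability argument: the sentence ``a nontrivial Harder--Narasimhan filtration would produce additional endomorphisms'' is not correct. Simplicity $\Hom(E,E)=\CC$ does \emph{not} by itself preclude a nontrivial HN filtration; simple unstable sheaves exist in general. The standard argument on a del Pezzo surface uses rigidity $\Ext^1(E,E)=0$ together with Serre duality in an essential way: if $F\subset E$ is the maximal destabilizing subsheaf, one shows $\Hom(F,E/F)=0$ and $\Ext^2(E/F,F)=\Hom(F,(E/F)(K))^\vee=0$ by slope inequalities, and then the long exact sequences force $\Ext^1(F,F)=\Ext^1(E/F,E/F)=0$, allowing an induction on rank (or a Bogomolov-type discriminant inequality) to reach a contradiction. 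Your proposal does not supply this mechanism.

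The two gaps you already flag are both real. First, $2ab\equiv -1\pmod r$ does \emph{not} imply $\gcd(a+b,r)=1$: for instance $r=7$, $a=5$, $b=2$ satisfies $2ab\equiv -1\pmod 7$ yet $a+b=7$. So the denominator statement for $\mu_{1,1}$ genuinely needs more than the Riemann--Roch identity; in the literature it comes out of the inductive construction of exceptional bundles via mutations. Second, your double-dual argument for local freeness is incomplete as stated: applying $\Hom(E,-)$ to $0\to E\to E^{**}\to Q\to 0$ and using $\Ext^1(E,E)=0$ gives a surjection $\Hom(E,E^{**})\twoheadrightarrow\Hom(E,Q)$, not the vanishing of $Q$. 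One route is to observe that if $Q\neq 0$ then $\ch_2(E^{**})>\ch_2(E)$, so $\Delta(E^{**})<\Delta(E)$ and $\chi(E^{**},E^{**})>\chi(E,E)=1$, which is impossible for a simple sheaf once one knows $\Ext^2(E^{**},E^{**})=0$; filling this in requires care. Since your stability and denominator arguments also feed into uniqueness, the whole chain currently rests on steps that are either circular or unproven.
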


Given these properties, it makes sense to define the \textit{rank} of an exceptional slope $\nu$ to be the smallest $r \in \ZZ$ such that $r \mu_{1,1}(\nu) \in \ZZ$.
Also, set the notation $E_{\frac{a}{r},\frac{b}{r}}$ for the unique exceptional bundle with slope $\left(\frac{a}{r},\frac{b}{r} \right)$.
Given a Chern character $\xi$ with slope $\alpha = \left( \frac{a}{r},\frac{b}{r} \right)$, we will interchangeably write $E_{-\alpha} = E_\alpha^* = E_{\frac{-a}{r},\frac{-b}{r}}$.
Similarly, we will abuse notation and write the slope and the whole Chern character interchangeably.

We can characterize exceptional bundles among all stable bundles by the Euler characteristic $\chi(E,E)$. 
For stable $E$, Serre duality implies $\Ext^2(E,E) = \Hom(E,E(K)) = 0$.
Similarly, the stability of $E$ implies that $\Hom(E,E) = \CC$.
Then for stable $E$ we have
$$\chi(E,E)= 1 -\textnormal{ext}^1(E,E) \leq 1$$
with equality precisely when $E$ is exceptional.
Conversely, we use Hirzebruch-Riemann-Roch to explicitly compute 
$$\chi(E,E) = r(E)^2\left(1-2\Delta \right).$$
Putting these together, we see that for a stable bundle
$$\Delta(E) = \frac{1}{2}\left( 1-\frac{\chi(E,E)}{r(E)^2}\right).$$
For exceptional bundles, this reduces to
$$\Delta(E) = \frac{1}{2}\left( 1-\frac{1}{r(E)^2}\right).$$
Since $\chi(F,F)\leq 0$ for all other stable bundles, we see that exceptional bundles are the only stable bundles with $\Delta < \frac{1}{2}$.
As there is a unique exceptional bundle for an exceptional slope and there can be no other stable bundles with that discriminant, the moduli space of stable bundles with an exceptional Chern character is a single reduced point \cite{G:G89}.
Giving an explicit description of the exceptional bundles analogous to the description of the exceptional bundles on $\PP^2$ given in \cite{CHW:CHW14} and \cite{LP:LP97} is an open question.


\subsection{Exceptional Collections}
\label{subsec: exceptional collections}

Exceptional bundles naturally sit inside of collections.
\begin{deff}A collection of exceptional bundles $\left( E_0,\cdots,E_n\right)$ is an \textit{exceptional collection} if for all $i<j$, $\Ext^k(E_j,E_i) = 0$  for all $k$ and there is at most one $k$ such that $\Ext^k(E_i,E_j) \neq 0$.\end{deff}
An exceptional collection is \textit{strong} if $k=0$ for all pairs $(i,j)$.
We say the \textit{length} an exceptional collection $\left( E_0,\cdots,E_n\right)$ is $n+1$.
An \textit{exceptional pair} is an exceptional collection of length two. 
A \textit{coil} is a maximal exceptional collection, which in the case of $\QS$ is length four.
Our stereotypical (strong) coil is $\left( \OO,\OO(1,0),\OO(0,1),\OO(1,1)\right)$.

Every exceptional bundle sits inside of an exceptional collection, and every exceptional collection can be completed to a coil \cite{R2:R94}.
Given an exceptional collection of length three $(E_0,E_1,E_2)$ there are exactly four ways to complete it to a coil:
$$(A,E_0,E_1,E_2), (E_0,B,E_1,E_2), (E_0,E_1,C,E_2) \textnormal{, and } (E_0,E_1,E_2,D).$$
In other words, once you pick where you would like the fourth bundle to be, there is a unique way to complete the exceptional collection to a coil.
This uniqueness follows as an easy consequence of the requirement that the fourth bundles forms an exceptional pair in the correct way with the other three bundles.
First, each bundle we require it to be in an exceptional pair with imposes an independent condition on its rank and first Chern classes so they are determined.
Then the rank and first Chern class determine its discriminant, and the bundle is uniquely determined by its Chern classes.

Before we can state how to extend an exceptional collection of length two to a coil, we first must explain a process that turns an exceptional collection into different exceptional collections called \textit{mutation} or \textit{reconstruction}.
We first define mutation for exceptional pairs and then bootstrap this definition into a definition for all exceptional collections.
The definitions of mutation that we use are equivalent on $\QS$ to the general definitions \cite{G:G89}.
\begin{deff}The \textit{left mutation of an exceptional pair} $\left(E_0,E_1\right)$ is the exceptional pair $L\left(E_0,E_1\right) = \left( L_{E_0}E_1,E_0\right)$ where $L_{E_0}E_1$ is determined by one of the following short exact sequences:
$$\textnormal{(regular) } 0\to L_{E_0}E_1 \to E_0 \otimes \Hom(E_0,E_1) \to E_1 \to 0,$$
$$\textnormal{(rebound) } 0\to E_0 \otimes \Hom(E_0,E_1) \to E_1 \to L_{E_0}E_1 \to 0\textnormal{, or}$$
$$\textnormal{(extension) } 0\to E_1 \to L_{E_0}E_1 \to E_0 \otimes \Ext^1(E_0,E_1) \to 0.$$\end{deff}
One of these sequences exists by Gorodentsev \cite{G:G89}.
By rank considerations only one of the previous short exact sequences is possible so the left mutation is unique.
Rebound and extension mutations are called \textit{non-regular}.
\textit{Right mutation of an exceptional pair}, denoted $R\left(E_0,E_1\right)=\left( E_1,R_{E_1}E_0 \right)$, is defined similarly by tensoring the $\Hom$ or $\Ext$ with $E_1$ rather than with $E_0$.
Note that left and right mutation are inverse operations in the sense that 
$$L\left( R\left(E_0,E_1\right) \right) = R\left( L\left(E_0,E_1\right) \right) =\left(E_0,E_1\right).$$

We can also mutate any part of an exceptional collection.
In particular, replacing any adjacent exceptional pair in an exceptional collection with any of its left or right mutations gives another exceptional collection.
For example, given an exceptional collection $\left( E_0,E_1,E_2,E_3\right)$,
$$\left(  L(E_0,E_1),E_2,E_3 \right), \left(  R(E_0,E_1),E_2,E_3 \right), \left(  E_0,L(E_1,E_2),E_3 \right)$$
$$\left(  E_0,R(E_1,E_2),E_3 \right), \left(  E_0,E_1,L(E_2,E_3) \right)\textnormal{, and } \left(  E_0,E_1,R(E_2,E_3) \right)$$
are all exceptional collections. 
Rudakov proved that all possible exceptional collections can be gotten from our stereotypical collection via these pairwise reconstructions \cite{R:R89}. 

Mutating an exceptional collection is then just mutating all of the bundles in a systematic way.
We define the \textit{left(right) mutation of an exceptional collection} $\left( E_0,E_1,E_2,,\cdots, E_n\right)$ as 
$$\left( L_{E_0}\cdots L_{E_{n-1}} E_n, \cdots, L_{E_0}L_{E_1}E_2 ,L_{E_0}E_1,E_0\right)$$
$$\left(\left( E_n,R_{E_n}E_{n-1} ,R_{E_n}R_{E_{n-1}}E_{n-2},\cdots, R_{E_n}\cdots R_{E_1}E_0\right)\right).$$
For a coil $(E_0,E_1,F_0,F_1)$ on $\QS$, its left mutation is $(F_{1}(K),F_2(K),E_{-1},E_0)$ and its right mutation $(F_1,F_2,E_{-1}(-K),E_0(-K))$.

Now, let's return to the problem of completing an exceptional pair to a coil.
Say that we could extend $(E_0,E_1)$ to the coil $(E_0,E_1,F_0,F_1)$, this extension is not unique, even up to placement because $(E_0,E_1,L(F_0,F_1))$ and $(E_0,E_1,R(F_0,F_1))$ are also coils.
To make a unique notation of extension, we need the notion of a \textit{system}.
\begin{deff}Using mutation, each exceptional pair $\left(E_0,E_1\right)$ generates a \textit{system of exceptional bundles} $\{ E_i\}_{i\in\ZZ}$ where we inductively define $E_{i+1} = R_{E_i}E_{i-1}$ and $E_{i-1} = L_{E_i}E_{i+1}$.\end{deff}
Given an exceptional pair  $\left(E_0,E_1\right)$, we define the \textit{right completion system} of it to be the unique system $\{F_i\}$ such that  $\left(E_0,E_1,F_i,F_{i+1}\right)$ is a coil.
\textit{Left} and \textit{center} completion systems are defined analogously.
A \textit{left (right,center) completion pair} is any pair $\left(F_i,F_{i+1}\right)$ coming from the left (right,center) completion system.
By Prop. 4.5 \& 4.8 of Rudakov's paper \cite{R:R89}, the completion system is either a system of line bundles or has a \textit{minimally ranked completion pair} where the minimally ranked completion pair is the pair in the system with the lowest sum of the two ranks of the bundles in the system.

Completing an exceptional collection of length one to a coil can be reduced to the two steps of completing it to an exceptional pair (which is highly not unique) and then completing the pair to a coil.
In this paper, we will start with pairs and provide a unique way to extend them to a coil.

Given a complex $W:A^a  \to B^b$ of powers of an exceptional pair $(A,B)$, we extend the idea of mutation to the complex.
Define $L W$ to be the complex 
$$LW: (L_A B)^a \to A^b$$
and similarly define $R W$ to be the complex 
$$RW: B^a \to (R_B A)^b.$$

We also define the mutations relative to an exceptional bundle $C$ where $\{C,A,B\} \left(\{A,B,C\}\right)$ is an exceptional collection as follows:
If $\{C,A,B\}$ is an exceptional collection, define $L_C W$ to be the complex 
$$L_C W: (L_C A)^a \to (L_C B)^b,$$
and similarly, if $\{A,B,C\}$ is an exceptional collection, define $R_C W$ to be the complex 
$$R_C W: (R_C A)^a \to (R_C B)^b.$$

We can now state a theorem of Gorodentsev which establishes the spectral sequence that we need on $\QS$.
\begin{thm}[\cite{G:G89}]
\label{thm:specseq}
Let $U$ be a coherent sheaf on $\QS$, and let $\left( E_0,E_1,F_0,F_1 \right)$ be a coil.
Write $\mathcal{A} = \left( A_{0},A_{1},A_{2},A_{3}\right) = \left(E_0,E_1,F_0,F_1 \right)$ and $\mathcal{B} = \left( B_{-3},B_{-2},B_{-1},B_{0} \right) =\left( F_{1}(K), F_{2}(K), E_{-1},E_0\right)$
There is a spectral sequence with $E_1^{p,q}$-page
$$E_1^{p,q} = B_p \otimes \Ext^{q-\Delta_p}\left(A_{-p} ,U\right)$$
that converges to $U$ in degree 0 and to 0 in all other degrees
where $\Delta_p$ is the number of non-regular mutations in the string $L_0 ...L_{p-1} A_p$ which mutates $A_p$ into $B_{-p}$.
\end{thm}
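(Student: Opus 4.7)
The plan is to realize this as a Beilinson-type spectral sequence obtained from a resolution of the diagonal $\Delta \subset \QS \times \QS$ built from the coil $\mathcal{A}$ and its dual $\mathcal{B}$. The classical model is Beilinson's spectral sequence on $\PP^n$; the only substantive change is that the role played there by the twisted cotangent bundles is taken over by the dual coil $\mathcal{B}$, with the shifts $\Delta_p$ recording where the mutation sequence producing $\mathcal{B}$ from $\mathcal{A}$ is non-regular.

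First I would verify that $\mathcal{A}$ is a \emph{full} exceptional collection generating $D^b(\QS)$. Rudakov's result that every coil on $\QS$ is obtained from the standard coil $(\OO,\OO(1,0),\OO(0,1),\OO(1,1))$ by iterated mutation, together with the fact that mutation preserves the triangulated subcategory generated, reduces fullness of $\mathcal{A}$ to fullness of the standard coil --- which is immediate from the K\"unneth decomposition $D^b(\QS)\simeq D^b(\PP^1)\boxtimes D^b(\PP^1)$ and Beilinson's theorem on each factor.

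Next I would identify $\mathcal{B}$ as the \emph{right dual collection} to $\mathcal{A}$. Iterated left mutation produces objects $\widetilde{B}_{-p} := L_{A_0}L_{A_1}\cdots L_{A_{p-1}}A_p$; using the defining triangle of each mutation (regular, rebound, or extension) and induction on $p$, one obtains the orthogonality
\[
\Ext^{k}(A_{-p},\widetilde{B}_q) \;=\; \begin{cases} \CC & (q,k)=(-p,\Delta_p),\\ 0 & \text{otherwise.}\end{cases}
\]
Each non-regular step introduces a cohomological shift by $[1]$, so $\widetilde{B}_{-p}\isom B_{-p}[\Delta_p]$ with $B_{-p}$ an honest sheaf; the explicit identification of the $B_{-p}$ with $(F_1(K),F_2(K),E_{-1},E_0)$ follows from the formula for left mutation of a coil on $\QS$, itself a consequence of Serre duality and the definition of the completion system.

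Finally, using the dual pair $(\mathcal{A},\mathcal{B})$ one obtains a Koszul-type resolution of $\OO_\Delta$ in $D^b(\QS\times\QS)$ whose $p$-th term is $B_p\boxtimes A_{-p}^{\vee}[\Delta_p]$. Tensoring with $p_1^*U$ and applying $Rp_{2*}$, the projection formula yields a complex whose $p$-th term is $B_p\otimes \mathbf{R}\Hom(A_{-p},U)[\Delta_p]$, and the hypercohomology spectral sequence of this complex has exactly the stated $E_1$-page $B_p\otimes \Ext^{q-\Delta_p}(A_{-p},U)$, abutting to $U$ in degree $0$ and to $0$ otherwise. The main obstacle is bookkeeping of the shifts: one must check inductively that each non-regular mutation in the string $L_0\cdots L_{p-1}A_p$ contributes exactly one extra cohomological degree, so that the combinatorial quantity $\Delta_p$ really coincides with the homological shift appearing on the $E_1$-page. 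Once this is in place, the existence and exactness of the resolution of $\OO_\Delta$ is a formal consequence of $(\mathcal{A},\mathcal{B})$ being a full, cohomologically orthogonal dual pair.
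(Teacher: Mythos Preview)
The paper does not supply a proof of this theorem: it is stated with attribution to Gorodentsev \cite{G:G89} and used as a black box. Your outline is the standard argument for generalized Beilinson spectral sequences and is essentially correct; indeed, the paper itself implicitly uses exactly the resolution-of-the-diagonal picture you describe when it later unwinds the spectral sequence via Fourier--Mukai transforms (see the proofs of Propositions \ref{triangleThm} and \ref{triangleThm otherway}, where a resolution of $\OO_\Delta$ by terms of the form $B_p\boxtimes(\cdot)$ is split into short exact sequences and pushed forward).

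Two small points of bookkeeping. First, your orthogonality display $\Ext^k(A_{-p},\widetilde B_q)$ has the roles of the two collections reversed from the usual convention for a right dual; the correct statement is that $\mathbf{R}\Hom(\widetilde B_{-p},A_j)$ (or its transpose, depending on conventions) is $\CC$ concentrated in one degree when $j=p$ and zero otherwise, and it is this that forces the terms $B_p\boxtimes A_{-p}^{\!*}$ (up to shift) in the resolution of $\OO_\Delta$. Second, your projection-formula step should read ``tensor with $p_2^*U$ and apply $Rp_{1*}$'' (or the symmetric choice), matching the paper's convention; as written the factors are swapped. Neither of these affects the substance of the argument.
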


It should be clear that $\Delta_0=0$. 
Considering the spectral sequence converging to different bundles of the coil allows us to deduce that $\Delta_3 = 1$ and that the other two are either 0 or 1 [Rmk. 1.5.2, \cite{K2:K94}].
Also, notice that $\mathcal{B}$ is the left mutation of $\mathcal{A}$.

\subsection{Moduli spaces of sheaves}

Let $E$ be exceptional with Chern character $e$.
Then, $E$ imposes a numerical condition on stable bundles with ``nearby'' Chern characters.
To see this condition, start with an exceptional bundle $E$ and another stable bundle $F$, we know that there are no maps from $E$ to $F$ if the $(1,1)$-slope of $E$ is bigger than $F$'s by $F$'s stability so
$$\hom(E,F) = 0.$$
Similarly, there are no maps from $F$ to $E$ twisted by the canonical if the $(1,1)$-slope of $F$ is greater than the $(1,1)$-slope of $E(K)$ so
$$\hom(F,E(K)) = 0.$$
By Serre duality, 
$$\hom(F,E(K)) =\ext^2(E,F) = 0.$$
Thus, if we have both of these conditions, we know that 
$$\chi(E,F) = \textnormal{ext}^1(E,F) \leq 0.$$
This is the numeric condition that $E$ imposes on nearby stable bundles.
Given a fixed exceptional $E$, we can encode this data by saying that the Chern character of $F$ must lie on or above a certain surface, $\delta_E$, in the $(\mu,\Delta)$ space.
We define $\delta_E(\mu)$
\begin{displaymath}
   \delta_E(\mu) = \left\{
     \begin{array}{lr}
       \chi(E,\mu)=0 & \textnormal{ if } \mu_{1,1}(E)-4 <\mu_{1,1}(\mu) \textnormal{ and } \gbar(\mu)<\gbar(E) \\
       \chi(\mu,E)=0 & \textnormal{ if } \mu_{1,1}(\mu) <\mu_{1,1}(E) +4  \textnormal{ and } \gbar(E)<\gbar(\mu) \\
       0 & \textnormal{otherwise                   }
     \end{array}
   \right\}.
\end{displaymath}
Then $F$'s Chern character lying on or above $\delta_E$ means that 
$$\Delta(F) \geq \delta_E(\mu(F)).$$

Using these conditions, each exceptional bundle gives an inequality that a Chern character must satisfy in order to be stable.
We combine all of these conditions into one by looking at the maximum over all of the inequalities.
Formally, let $\EEE$ be the set of exceptional bundles and define the $\delta$ surface by  
\begin{definition}\label{def:deltasurface}
$$\delta(\mu) = \sup_{\{E\in \EEE \}} \delta_E\left( \mu \right).$$
\end{definition}
Then saying that a stable Chern character, $\zeta$, must satisfy all of the inequalities from exceptional bundles is equivalent to
$$\Delta(F) \geq \delta(\mu(F)).$$
Alternatively, we say that a stable Chern character must lie on or above the $\delta$-surface.

Rudakov proved that lying above the $\delta$ surface was not only necessary but also sufficient for a Chern character to be stable.
\begin{theorem}\label{thm:modulispace}[Main theorem, \cite{R2:R94}]
Let $\xi = (r,\mu,\Delta)$ be a Chern character of postive integer rank. There exists a positive dimensional moduli space of $\gbar$-semistable sheaves $M_{\gbar}\left( \xi \right)$ with Chern character $\xi$ if and only if $c_1(\mu) \cdot(1,1)=r\mu_{1,1} \in \ZZ$, $\chi = r\left( P(\mu)-\Delta\right) \in \ZZ$, and $\Delta \geq \delta(\mu)$\cite{R2:R94}.
The same conditions are necessary and sufficient for $\gamma$-semistability as long as $\Delta > \frac{1}{2}$ and $\mu \notin \EEE \ZZ$\cite{R2:R94}.
\end{theorem}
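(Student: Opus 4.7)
The plan is to split the statement into (i) the two integrality requirements, (ii) necessity of $\Delta \geq \delta(\mu)$, and (iii) sufficiency, which is the deep part. The integrality claims are immediate: $r\mu_{1,1} = c_1(\mu)\cdot(1,1)$ is an intersection number of integral classes on $\QS$, and $\chi = r(P(\mu)-\Delta)$ is the Euler characteristic of any representative, so both lie in $\ZZ$ as soon as such a sheaf exists.

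For necessity of $\Delta(\xi) \geq \delta(\mu(\xi))$, I would fix any exceptional bundle $E \in \EEE$ and any $\gbar$-semistable sheaf $F$ with Chern character $\xi$. In the slope region where $\delta_E$ is nontrivial, the stability of $E$ together with the $\gbar$-semistability of $F$ forces
\[
\hom(E,F) = 0 \quad\text{and}\quad \ext^2(E,F) = \hom(F,E(K))^{*} = 0
\]
by Serre duality and comparison of slopes. Hence $\chi(E,F) = -\ext^1(E,F) \leq 0$, and unwinding this via Hirzebruch-Riemann-Roch gives exactly $\Delta(F) \geq \delta_E(\mu(F))$. Taking the supremum over $E \in \EEE$ (Def. \ref{def:deltasurface}) produces the asserted inequality.

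Sufficiency is the hard direction. Given $\xi$ satisfying the three conditions, the plan is to construct an actual $\gbar$-semistable sheaf with character $\xi$ via a generalized Beilinson resolution. First I would pick a coil $(E_0,E_1,F_0,F_1)$ whose inequality $\delta_{E_i}$ (or $\delta_{F_j}$) is sharpest at $\mu(\xi)$; the existence of such a sharpest coil comes out of Rudakov's classification of exceptional bundles. Applying Gorodentsev's spectral sequence (Thm. \ref{thm:specseq}) to a putative $\xi$-sheaf, the choice of sharpest coil together with the integrality hypotheses collapses the $E_1$-page to at most two nonzero terms, so a candidate $\xi$-sheaf is realized as the cohomology of a two-term complex of direct sums of coil bundles with multiplicities determined explicitly by $\xi$. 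A straightforward dimension count shows the parameter space of such complexes is nonempty (and in fact positive dimensional) precisely when $\Delta \geq \delta(\mu)$.

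The main obstacle is to show that the \emph{generic} such complex has cohomology which is $\gbar$-semistable. The plan is a Drezet--Le Potier style argument: assume for contradiction a destabilizing subsheaf $F' \subset F$, observe that $F'$ must itself lie above the $\delta$-surface, then use the fixed resolution of $F$ to bound $\hom$ and $\ext^1$ of $F'$ against each bundle in the coil, and derive an incompatibility with the sharpness of the chosen exceptional inequalities at $\mu(\xi)$. Finally, the refinement to $\gamma$-semistability follows because the extra conditions $\Delta > \tfrac{1}{2}$ and $\mu \notin \EEE\ZZ$ prevent $\xi$ itself from being exceptional, so the $\gbar$-semistable sheaves produced above vary in a positive-dimensional family and the generic one is forced to be Gieseker semistable by the equivalent formulation of $\gamma$-stability in terms of $\mu_H$ and $\Delta$ recorded in Subsec. \ref{subsubsec: Classical Stability Conditions}.
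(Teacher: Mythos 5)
The first thing to say is that the paper does not prove this statement at all: it is quoted, with attribution, as the main theorem of Rudakov's paper \cite{R2:R94} and is used as an external input, so there is no internal proof to compare yours against. Your integrality observations and your necessity argument are correct, and the necessity part coincides with the discussion the paper gives immediately before the theorem: for each exceptional $E\in\EEE$ in the relevant slope range one has $\hom(E,F)=0$ and $\ext^2(E,F)=\hom(F,E(K))^{*}=0$, hence $\chi(E,F)\leq 0$, which via Riemann--Roch is $\Delta(F)\geq\delta_E(\mu(F))$, and one takes the supremum over $\EEE$.

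The sufficiency direction, however, is where the entire content of the theorem lies, and your sketch has genuine gaps there. First, the claim that a ``sharpest'' coil collapses the Beilinson spectral sequence to a two-term complex with admissible (nonnegative) multiplicities, and that nonemptiness of the space of such complexes is \emph{equivalent} to $\Delta\geq\delta(\mu)$, is asserted rather than argued; this is delicate because $\delta$ is a supremum over infinitely many exceptional inequalities and the controlling coil varies with $\mu$, so ``a straightforward dimension count'' does not suffice. Second, the crucial step that the \emph{generic} such complex has $\gbar$-semistable cohomology is exactly the hard part: the Drezet--Le Potier-style program you invoke is a substantial piece of work on $\QS$ (it is, in effect, Rudakov's paper, together with irreducibility results for prioritary sheaves of the type the present paper cites from \cite{Wa:Wa98}), and you give no actual argument for why a destabilizing subsheaf produces a contradiction with the sharpness of the chosen inequalities. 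Third, the final passage from $\gbar$-semistability to $\gamma$-semistability under $\Delta>\frac{1}{2}$ and $\mu\notin\EEE$ does not follow merely from $\xi$ not being exceptional and the family being positive dimensional; it requires a comparison of the two stability orders, which you do not supply. So as a blind proof the proposal is an outline whose central steps are missing; as a reading of the paper, the accurate statement is simply that this theorem is cited from \cite{R2:R94}, not proved there.
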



\subsection{New basic properties}
\label{subsec:new prop}
With this background, we can begin our original work on $\MMM$.
In the last subsection, we listed many properties of the nonempty moduli spaces. 
In this subsection, we prove that the moduli spaces are $\QQ$-factorial and are Mori dream spaces.
In fact, we prove these results for all moduli spaces of Gieseker semistable sheaves on any del Pezzo surface, not just $\QS$.

\begin{prop}
\label{prop:Q-factorial}
Let $M$ be the moduli space of semistable sheaves with a fixed Chern character on a del Pezzo surface.
Then $M$ is $\QQ$-factorial.
\end{prop}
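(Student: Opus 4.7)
The plan is to exhibit $M$ as a GIT quotient of a smooth scheme by a connected reductive group and then descend individual Weil divisors to $\mathbb{Q}$-Cartier divisors via Kempf's descent lemma. This is the standard strategy used by Dr{\'e}zet for moduli on rational surfaces; what the del Pezzo hypothesis buys us is smoothness of the parameter scheme, after which the argument is largely formal.

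By the Gieseker--Simpson construction, for $m \gg 0$ there is an $\mathrm{SL}(N)$-invariant open subscheme $R^{ss}$ of the Quot scheme $\mathrm{Quot}(\mathcal{O}_X(-m)^{\oplus N}, P)$ parametrizing $m$-regular semistable sheaves with the prescribed Chern character, together with a good quotient $\pi : R^{ss} \to M$. The first step is to verify that $R^{ss}$ is smooth. The deformation theory of $\mathrm{Quot}$ locates the obstruction at a point $[\mathcal{O}(-m)^{\oplus N} \twoheadrightarrow E]$ inside $\mathrm{Ext}^2(E,E)$, and Serre duality gives $\mathrm{Ext}^2(E,E) \cong \mathrm{Hom}(E, E \otimes K_X)^{\vee}$. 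Since $X$ is del Pezzo, $-K_X$ is ample, so every Jordan--H\"older factor of $E$ has strictly larger slope than its own twist by $K_X$; comparing reduced Hilbert polynomials then forces $\mathrm{Hom}(E, E \otimes K_X) = 0$, so $R^{ss}$ is smooth. In particular every $\mathrm{SL}(N)$-invariant Weil divisor on $R^{ss}$ is automatically Cartier.

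Given a Weil divisor $D$ on $M$, the preimage $\pi^{-1}(D)$ is a codimension-one $\mathrm{SL}(N)$-invariant closed subscheme of $R^{ss}$, hence Cartier, yielding an $\mathrm{SL}(N)$-invariant line bundle $\mathcal{L}$. Because $\mathrm{Pic}(\mathrm{SL}(N)) = 0$, $\mathcal{L}$ admits a canonical $\mathrm{SL}(N)$-linearization, and the question of whether $kD$ is Cartier on $M$ becomes the question of whether $\mathcal{L}^{\otimes k}$ descends along $\pi$. Kempf's descent lemma answers this affirmatively provided that, for every closed orbit $\mathrm{SL}(N) \cdot x$ in $R^{ss}$ (corresponding to a polystable sheaf $E = \bigoplus_i E_i^{\oplus m_i}$), the reductive stabilizer $\mathrm{SAut}(E)$ acts on the fiber $\mathcal{L}_x$ by a character whose order divides $k$.

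The main obstacle is verifying this torsion-character condition when strictly polystable sheaves are present, since the character lattice of $\prod_i \mathrm{GL}(m_i)$ (modulo the scalar subgroup) is free of positive rank in general. I would address this by invoking Luna's \'etale slice theorem to present $\pi$ \'etale-locally near a closed orbit as the affine GIT quotient $\mathrm{Ext}^1(E,E) \to \mathrm{Ext}^1(E,E)/\!/\mathrm{Aut}(E)$, with $\mathrm{Ext}^1(E,E) = \bigoplus_{i,j} \mathrm{Hom}(V_i, V_j) \otimes \mathrm{Ext}^1(E_i, E_j)$ an explicit representation of $\mathrm{Aut}(E)$. The del Pezzo hypothesis re-enters here because the same slope argument that gave smoothness of $R^{ss}$ also forces $\mathrm{Ext}^2(E_i, E_j) = 0$, so the local model is genuinely linear, and the finitely many $\mathrm{Aut}(E)$-semi-invariant hypersurfaces in the slice have weights from a finitely generated sublattice of $X^{\ast}(\mathrm{Aut}(E))$. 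Taking $k$ to be a uniform integer clearing these weights across the finitely many polystable strata of $M$ then produces $kD$ Cartier, which is $\mathbb{Q}$-factoriality; this weight analysis is where I expect essentially all of the technical work to lie.
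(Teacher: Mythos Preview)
Your argument and the paper's rest on the same two ingredients: smoothness of the parameter space $R^{ss}$, and a passage from that smoothness to $\QQ$-factoriality of the quotient. You prove the first correctly---the vanishing $\Ext^2(E,E)=0$ via $-K_X$ ample is exactly the content behind the paper's citation of Dr\'ezet. For the second, the paper simply invokes a theorem of Hausen as a black box, whereas you try to carry out the descent by hand via Kempf's lemma and the Luna slice.

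That attempt has a genuine gap in the last paragraph. At a polystable point $E=\bigoplus_i E_i^{\oplus m_i}$ with $s\ge 2$ distinct stable factors, the stabilizer $H\subset\mathrm{SL}(N)$ has character lattice of free rank $s-1$, so no positive power of $\mathcal{L}$ will kill a nonzero character: ``clearing the weights'' by passing to $\mathcal{L}^{\otimes k}$ only works when the character is already torsion. Saying the weights lie in ``a finitely generated sublattice of $X^*(\Aut(E))$'' is no constraint, since that lattice is itself finitely generated; what you must show is that the \emph{specific} character by which $H$ acts on $\mathcal{L}_x$ vanishes (or is torsion), and nothing in your slice description forces this. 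Indeed the implication ``good quotient of a smooth variety by a connected reductive group $\Rightarrow$ $\QQ$-factorial'' is false in general: $\CC^*$ acting on $\AA^4$ with weights $(1,1,-1,-1)$ has the affine quadric cone as good quotient, and that cone is not $\QQ$-factorial precisely because the character on the fiber over the vertex is non-torsion. So some input beyond smoothness of $R^{ss}$ is genuinely required, and your outline does not identify it. The efficient fix is the paper's: cite the relevant quotient theorem directly rather than reproving it.
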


\begin{proof}
By \cite{D2:D91}, $M$ is a geometric quotient of a smooth variety.
Applying Thm. 4 of \cite{Hau:Hau01} immediately allows us to conclude that $M$ is $\QQ$-factorial. 
\end{proof}

The proof that these spaces are Mori dream spaces is slightly more involved, but similar in flavor.
\begin{thm}
\label{thm:MDS}
Let $M$ be the moduli space of semistable sheaves with a fixed Chern character on a del Pezzo surface.
Then $M$ is a Mori dream space.
\end{thm}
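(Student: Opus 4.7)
The plan is to invoke the theorem of Birkar, Cascini, Hacon, and McKernan (BCHM), which implies that any $\QQ$-factorial projective variety $X$ admitting an effective $\QQ$-divisor $\Delta$ with $(X,\Delta)$ klt and $-(K_X+\Delta)$ ample is a Mori dream space. It therefore suffices to exhibit such a klt log Fano pair $(M,\Delta)$.

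First, Proposition \ref{prop:Q-factorial} together with the projectivity, normality, and irreducibility noted in the introduction reduces the problem to producing $\Delta$ and verifying the singularity condition. For the singularities, since $M$ arises as a GIT quotient of a smooth parameter scheme (a Quot scheme) by a reductive group, Boutot's theorem yields rational singularities; combined with normality and the $\QQ$-Gorenstein property that comes from the Donaldson determinant description of $K_M$, this can be upgraded to klt, with care required at strictly semistable points via Luna's slice theorem.

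The crux is constructing the log Fano boundary. I would use the Donaldson morphism $\lambda : \xi^\perp \to \pic(M)_\QQ$ to write $K_M = \lambda(w)$ for a class $w$ that depends linearly on $K_S$. Because $S$ is del Pezzo, $-K_S$ is ample, and transferring this positivity through $\lambda$ produces a natural ample contribution to $-K_M$. A suitable effective $\QQ$-boundary $\Delta$, for instance supported on Brill-Noether divisors of the form constructed in the main body of this paper together with divisors coming from universal cohomology jumping, can then be chosen so that $-(K_M+\Delta)$ is ample while $(M,\Delta)$ remains klt.

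The main obstacle will be the discrepancy bookkeeping near the strictly semistable locus: the coefficients and support of $\Delta$ must be calibrated so that the klt condition survives while preserving strict positivity of $-(K_M+\Delta)$. With such a $\Delta$ in hand, BCHM applies directly to conclude that $M$ is a Mori dream space.
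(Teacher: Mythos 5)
Your overall strategy is the one the paper uses: reduce via BCHM to showing $M$ is log Fano, get rational singularities from Boutot's theorem applied to the quotient presentation of $M$, and then produce a klt boundary. But as written there are two genuine gaps, both at the points where the actual work lies. First, the positivity step is asserted rather than proved: saying that the ampleness of $-K_S$ ``transfers through $\lambda$'' and that a boundary $\Delta$ supported on Brill--Noether divisors ``can then be chosen'' so that $-(K_M+\Delta)$ is ample is just a restatement of the log Fano condition, not an argument. The input the paper actually uses is the Huybrechts--Lehn determinant-bundle description of $K_M$ (Thm.\ 8.2.8 and 8.3.3 of \cite{HuL:HuL10}), which gives that $-K_M$ is nef and that there exist effective divisors $E$ with $-K_M-\epsilon E$ ample for small $\epsilon>0$; this is precisely where the del Pezzo hypothesis on the surface enters, and without it your ``calibration'' of $\Delta$ has nothing to stand on. Moreover, leaning on the Brill--Noether divisors of this paper is both unnecessary and out of order: their effectivity is established later, only for $\QS$, whereas the theorem is stated for all del Pezzo surfaces.

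Second, the upgrade from rational singularities to the klt condition needs a concrete mechanism, and ``care at strictly semistable points via Luna's slice theorem'' does not supply one. The paper's route is: $M$ is a geometric quotient of a smooth variety \cite{D2:D91}, hence has rational singularities by \cite{Bo:Bo87}; $M$ is 1-Gorenstein [Thm.\ 8.3.3, \cite{HuL:HuL10}]; and a 1-Gorenstein variety with rational singularities has canonical singularities [Thm.\ 11.1, \cite{K3:K96}]. Canonical singularities together with $\QQ$-factoriality (Prop.\ \ref{prop:Q-factorial}) then give that $(M,\epsilon E)$ is klt for any effective $E$ and sufficiently small $\epsilon$, which combined with the nefness statement above yields the log Fano structure. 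If you add these two inputs --- the Huybrechts--Lehn positivity of $-K_M$ and the 1-Gorenstein-plus-rational-implies-canonical step --- your outline closes up and coincides with the paper's proof.
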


\begin{proof}
The proof follows the same basic outline as the proof for moduli spaces of sheaves on $\PP^2$ \cite{CHW:CHW14}.
By \cite{BCHM:BCHM10}, a log Fano variety is a Mori dream space.
This result reduces the theorem to showing that $M$ is a log Fano variety.
Since the anticanonical bundle of $M$ is nef [\cite{HuL:HuL10},Thm. 8.2.8 \& 8.3.3] and there exists effective divisors $E$ such that $-K_{M}-\epsilon E$ is ample for all sufficiently small $\epsilon >0$, showing $M$ is log Fano reduces to showing that $\left( M,\epsilon E\right)$ is a klt-pair for all effective divisors $E$.
Showing that $\left( M,\epsilon E\right)$ is a klt-pair for all effective divisors $E$ further reduces to showing that $M$ has canonical singularities. 

We now show that $M$ has canonical singularities. 
By \cite{D2:D91}, $M$ is also a geometric quotient of a smooth variety.
By \cite{Bo:Bo87}, a geometric quotient of a variety with rational singularities has rational singularities so $M$ has rational singularities.
As a $M$ is 1-Gorenstein [Thm. 8.3.3, \cite{HuL:HuL10}], it has canonical singularities [Thm. 11.1, \cite{K3:K96}]. 
\end{proof}


\subsection{Additional Assumptions}
\label{subsec: additional assumptions}
There are two previously mentioned properties that we would also like our moduli spaces on $\QS$ to have. 
We want the complement of the stable locus to be codimension at least two, and we want them to have Picard rank 3.
As mentioned before, the first assumption allows us to ignore the strictly semistable locus when we are working with divisors, and the second assumption lets us use properties of the Picard group that we need.
These assumptions are justified for a few reasons.
First, they hold for the Hilbert schemes of points, which are the primary examples of such moduli spaces.
Second, Yoshioka proved that the second assumption holds for $M(\xi)$ where one of the slope components of $\xi$ is an integer and $\xi$ is above the $\delta$ surface \cite{Y:Y96}.
Lastly, there are no examples of $M(\xi)$ where $\xi$ is above the $\delta$ surface for which either assumption is known to fail.
In fact, both assumptions are believed to be true above the $\delta$ surface.
Proving that they hold in this region is the focus of current research.

We also assume that $\xi$ is above the $\delta$ surface and is not a multiple of an exceptional Chern character.

\subsection{The Picard Group of $M(\xi)$}
\label{subsec: the Picard group of M}
We have a good description of the Picard group of $\MMM$ if we assume that the Picard rank is three which we just added as a standing assumption.
Linear and numerical equivalence coincide on $\MMM$, so we have
$$\textnormal{NS}(\MMM) =\textnormal{Pic}(\MMM) \otimes \RR .$$
As mentioned above, we will work with $M^s(\xi)$ when it is convenient.
$M^s(\xi)$ is a coarse moduli space for the stable sheaves.
In contrast, $\MMM$ is not a coarse moduli space, unless $\xi$ is a primitive character, as it identifies $S$-equivalence classes of (strictly semistable) sheaves.

In order to understand the classes of the divisors that will span the effective cone, we have to understand the isomorphism 
$$\xi^{\perp} \isom \textnormal{NS}\left( \MMM\right).$$
We construct this isomorphism by uniquely defining a line bundle on families of sheaves for each element of $\xi^\perp$ as done in \cite{LP:LP97} and \cite{CHW:CHW14} for the case of $\PP^2$.  

Let $\UU/S$ be a flat family of semistable sheaves with Chern character $\xi$ where $S$ is a smooth variety.
Define the two projections $p: S \times \QS \to S$ and $q: S\times \QS \to \QS$.
Then consider the composition of maps
$$\lambda_\UU:K(\QS) \xrightarrow{q^*} K(S\times \QS) \xrightarrow{\cdot [\UU]} K(S\times\QS) \xrightarrow{-p!} K(S) \xrightarrow{\det} \pic(S)$$
where $p! = \sum (-1)^iR^iP_*$.
Colloquially, we are taking a sheaf on $\QS$, pulling it back along the projection to $S \times \QS$, restricting it to our family, pushing it forward along the projection onto $S$, and then taking its determinant to get a line bundle on $S$.

Tensoring our family $\UU$ by $p^*L$, where $L$ is a line bundle on $S$, does not change the given moduli map $f: S \to \MMM$, but does change the $\lambda$ map as follows:
$$\lambda_{\UU \otimes p^* L}(\zeta) = \lambda_{\UU}(\zeta) \otimes L^{\otimes -(\xi, \zeta)}.$$

Now given a class $\zeta \in \xi^\perp$, we want a $\lambda_M$ map which commutes with the moduli map in the sense that we should have
$$\lambda_{\UU}(\zeta) = f^* \lambda_M(\zeta)$$
for all $\UU$.
This equality determines a unique line bundle $\lambda_M(\zeta)$ on $\MMM$ and gives a linear map $\lambda_M:\xi^\perp \to \ns(\MMM)$ which is an isomorphism under our assumptions.

Caveat: We have normalized $\lambda_M$ using $-p!$ as in \cite{CHW:CHW14} rather than $p!$ as in \cite{HuL:HuL10} and \cite{LP:LP97} so that the positive rank Chern characters form the ``primary half space" that we define later this chapter. 


\subsection{A Basis for the Picard Group}
\label{subsec:a basis for the Picard space}
Using this isomorphism, we want to construct a basis for the Picard group.
Following Huybrechts and Lehn \cite{HuL:HuL10} and modifying it for $\QS$, we define three Chern characters $\zeta_{0}$, $\zeta_{a}$, and $\zeta_{b}$.
Bundles with these Chern characters will be a basis for the Picard space.
These Chern characters depend on the character $\xi$ of the moduli space. 
Let $a$ be the Chern character of a line of the first ruling and $b$ be the Chern character of a line of the second ruling on $\QS$.
We define our Chern characters by the equations
$$\zeta_{0} = r(\xi)\OO_{\QS}-\chi(\xi^*,\OO_{\QS})\OO_p,$$
$$\zeta_{a} = r(\xi)a_1-\chi(\xi^*,a)\OO_p\textnormal{, and}$$
$$\zeta_{b} = r(\xi)b_1-\chi(\xi^*,b)\OO_p$$
where $\OO_p$ is the structure sheaf of a point on $\QS$.
(Note the difference from the analogous definition in \cite{HuL:HuL10} by a sign due to our convention for $\lambda_M$.)

Given that $(\xi,\OO_{p}) = r(\xi)$, it should be clear that they are all in $\xi^\perp$.
They can also be shown to be in $\xi^\perp$ by using Riemann-Roch given that the Chern characters $(\ch_0, \ch_1, \ch_2)$ of $\zeta_{0},\zeta_{a}, \zeta_{b}$ are
$$\zeta_{0} = \left( r(\xi), (0, 0), -\chi(\xi) \right),$$
$$\zeta_{a} = \left( 0, \left(r(\xi),0 \right), - r(\xi)-c_1(\xi)\cdot(0,1)\right) ,  \textnormal { and}$$
$$\zeta_{b} = \left( 0, \left(0,r(\xi)\right), - r(\xi)-c_1(\xi)\cdot(1,0) \right).$$

Using the map from the previous section, define $\LL_{0}$, $\LL_{a}$, $\LL_{b}$ by the formulas $\LL_{0} = \lambda_M(\zeta_{0})$, $\LL_{a} = \lambda_M(\zeta_{a})$, and $\LL_{b} = \lambda_M(\zeta_{b})$. 
They are a basis for the Picard space.
%
We know that for $n>>0$, $\LL_{0} \otimes \LL_{a}^n$ and $\LL_{0} \otimes \LL_{b}^n$ are ample.

Now, $\zeta_a$ and $\zeta_b$ span the plane of rank zero sheaves in $\xi^\perp$.
Define the \textit{primary half space} of $\xi^\perp$ to be the open half space of positive rank Chern characters and the \textit{secondary half space} to be the closed half space of negative rank and rank zero Chern characters in $\xi^\perp$.
Similarly, define the \textit{primary and secondary halves} of $\ns(\MMM)$ as the images of these under the isomorphism $\lambda_M$.
Every extremal ray of the effective and nef cones sits in one of our halves.
Call an extremal ray of the effective or nef cone \textit{primary} or \textit{secondary} according to which half it lies in.

We know that the ample cone of $\MMM$ lies in the primary half space because $\LL_0$ lies in that half space.

\subsection{Brill-Noether Divisors}
\label{subsec: BN divisors}
Now that we have constructed a basis for the Picard space, we discuss the divisors that we will construct to span the effective cone.
These divisors will be examples of \textit{Brill-Noether divisors}.
A Brill-Noether locus in general is the place where the rank of some cohomology group jumps.

\begin{proposition}\label{prop:brillnoether}
Suppose $V \in M(\zeta)$ is a stable vector bundle and is cohomologically orthogonal to the general $U \in \MMM$. 
Put the natural determinantal scheme structure on the locus $$D_V = \overline{\{ U \in \MS : h^1(U\otimes V) \neq 0 \}}$$.

(1) $D_V$ is an effective divisor.

(2) If $\mu_{1,1}(U\otimes V)>-4$, then $\OO_{\MMM}(D_V) \isom \lambda_M(\zeta)$.

(3) If $\mu_{1,1}(U\otimes V)<0$, then $\OO_{\MMM}(D_V) \isom \lambda_M(\zeta)^* \isom \lambda_M(-\zeta)$.
\end{proposition}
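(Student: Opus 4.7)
The plan is to realize $D_V$ locally as the degeneration locus of a map between two locally free sheaves of equal rank on the moduli space, and then to identify the associated line bundle via the $\lambda_M$ formalism.

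First, I would use the hypothesis together with the appropriate slope condition to make two of the three cohomology groups $h^i(U \otimes V)$ vanish on a large open subset of $M^s(\xi)$. In case (2), the condition $\mu_{1,1}(U \otimes V) > -4$ forces $\mu_{1,1}(V) > \mu_{1,1}(U^*(K))$, so Serre duality combined with the stability of $U$ and $V$ gives $h^2(U \otimes V) = \dim \Hom(V, U^*(K)) = 0$ for every stable $U$. Combined with the numerical equality $\chi(U \otimes V) = 0$ coming from $\zeta \in \xi^\perp$, this forces $h^0(U \otimes V) = h^1(U \otimes V)$ pointwise, so $D_V$ is set-theoretically the jumping locus of $h^1$, and it is a proper subset by the cohomological orthogonality of $V$ with the general $U$. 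In case (3), the symmetric slope argument, using $\Hom(V^*, U) = 0$ when $\mu_{1,1}(V^*) > \mu_{1,1}(U)$, gives $h^0(U \otimes V) = 0$ on all of $M^s(\xi)$, and $D_V$ is the jumping locus of $h^2$.

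Next, on any flat family $\UU / S$ of sheaves with Chern character $\xi$, I would push $\UU \otimes q^* V$ forward along $p \colon S \times \QS \to S$. Choosing a two-term locally free resolution $0 \to W^{-1} \to W^0 \to V \to 0$ with $W^i$ a sum of sufficiently negative line bundles on $\QS$ (so that each $Rp_*(\UU \otimes q^* W^i)$ is locally free and concentrated in a single degree), one can represent $Rp_*(\UU \otimes q^* V)$ by a two-term complex $\alpha \colon A \to B$ of locally free sheaves on $S$. The condition $\chi(U \otimes V) = 0$ forces $\rk A = \rk B$, and the general-point vanishing from the previous step forces $\alpha$ to be generically an isomorphism. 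The vanishing locus of $\det \alpha$ is then an effective Cartier divisor whose scheme structure agrees with the natural determinantal one on $D_V |_S$, establishing (1), with class $\OO_S(D_V|_S) = \det(B) \otimes \det(A)^{-1}$.

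Finally, I would match this class against the appropriate $\lambda_\UU$. In case (2) the complex $\alpha$ lives in cohomological degrees $0$ and $1$, so $[Rp_*(\UU \otimes q^* V)] = [A] - [B]$ in $K(S)$ and hence $\lambda_\UU(\zeta) = \det(-p_!([\UU] \cdot q^*\zeta)) = \det(B) \otimes \det(A)^{-1}$, matching $\OO_S(D_V|_S)$. In case (3) the complex lives in degrees $1$ and $2$, introducing an overall sign that yields $\OO_S(D_V|_S) = \lambda_\UU(-\zeta) = \lambda_\UU(\zeta)^*$. Because this construction is functorial in $\UU$, the universal property defining $\lambda_M$ upgrades these identifications to the asserted isomorphisms on $M(\xi)$ itself. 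The chief technical hurdle is producing the two-term complex $\alpha \colon A \to B$ globally on $S$ and verifying that its determinantal zero locus carries precisely the scheme structure built into $D_V$; this is handled by standard cohomology-and-base-change applied to the resolution of $V$, together with the cohomological orthogonality hypothesis, which is exactly what prevents $\alpha$ from being identically degenerate and hence what makes $D_V$ a genuine divisor rather than all of $M(\xi)$.
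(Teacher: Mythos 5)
Your proposal is correct and follows essentially the same route as the paper, which simply adapts Prop.\ 5.4 of \cite{CHW:CHW14} by replacing the slope with the $(1,1)$-slope and $-3$ with $-4$: kill $h^2$ (resp.\ $h^0$) via stability and Serre duality using the canonical slope bound, represent $Rp_*(\UU \otimes q^*V)$ by a two-term complex of equal-rank locally free sheaves, and identify the determinantal divisor with $\lambda_M(\zeta)$ or $\lambda_M(-\zeta)$ according to the degrees in which the complex sits. Your write-up just makes explicit the argument the paper cites by reference.
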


\begin{proof}
After replacing slope with $(1,1)$-slope and $-3$ with $-4$ (the first is the slope of $K_{\PP^2}$
while the second is the $(1,1)$-slope of $K_{\QS}$), the proof is identical to that of Prop 5.4 in \cite{CHW:CHW14}.
\end{proof}

Given a Brill-Noether divisor, a natural question to ask is whether it lies in the primary or the secondary half of $\ns(\MMM)$.
The answer is immediate from the computation of the class of the Brill-Noether divisors.

\begin{cor}
\label{cor: brill noether half space}
Keep the notation and hypotheses from Prop. \ref{prop:brillnoether}.\newline
(1) If $\mu_{1,1}(U\otimes V) > -4$, then [$D_V$] lies in the primary half of $\textnormal{NS}(M(\xi))$. \newline
(2) If $\mu_{1,1}(U\otimes V) < 0$, then [$D_V$] lies in the secondary half of $\textnormal{NS}(M(\xi))$.
\end{cor}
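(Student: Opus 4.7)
The plan is to observe that this corollary is essentially a direct consequence of Proposition \ref{prop:brillnoether} together with the definitions of the primary and secondary halves of $\xi^\perp$ and $\ns(M(\xi))$. All the real work has been done in identifying the class of $D_V$ with $\lambda_M(\pm\zeta)$; what remains is just to track the sign of the rank.

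First I would recall that since $V \in M(\zeta)$ is a stable vector bundle, the Chern character $\zeta$ has positive rank, i.e. $r(\zeta) = \ch_0(\zeta) > 0$. Thus $\zeta$ lies in the primary half space of $\xi^\perp$ (the open half space of positive rank Chern characters in $\xi^\perp$), while $-\zeta$ has negative rank and therefore lies in the secondary half space of $\xi^\perp$.

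For statement (1), the hypothesis $\mu_{1,1}(U\otimes V)>-4$ together with Proposition \ref{prop:brillnoether}(2) gives $\OO_{M(\xi)}(D_V)\isom \lambda_M(\zeta)$. Since the isomorphism $\lambda_M\colon \xi^\perp \to \ns(M(\xi))$ carries the primary half space of $\xi^\perp$ onto the primary half of $\ns(M(\xi))$ by definition, and $\zeta$ lies in the former, $[D_V]$ lies in the primary half of $\ns(M(\xi))$. For statement (2), the hypothesis $\mu_{1,1}(U\otimes V)<0$ together with Proposition \ref{prop:brillnoether}(3) gives $\OO_{M(\xi)}(D_V)\isom \lambda_M(-\zeta)$, and since $-\zeta$ lies in the secondary half space of $\xi^\perp$, its image $[D_V]$ lies in the secondary half of $\ns(M(\xi))$.

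There is no real obstacle here: the entire content of the corollary is bookkeeping on the sign of the rank of $\zeta$ once one knows which of $\lambda_M(\zeta)$ or $\lambda_M(-\zeta)$ represents $[D_V]$. The one thing to be careful about is the sign convention for $\lambda_M$ (the paper uses $-p_!$ rather than $p_!$), but this was precisely the convention chosen in Section \ref{subsec: the Picard group of M} so that positive rank characters form the primary half; so there is no conflict to resolve.
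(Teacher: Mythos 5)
Your proposal is correct and is essentially the paper's argument: the paper treats the corollary as immediate from the computation of the class of $D_V$ in Prop. \ref{prop:brillnoether}, and your write-up just makes explicit the bookkeeping that $\zeta$ has positive rank (so $\lambda_M(\zeta)$ lands in the primary half) while $-\zeta$ has negative rank (so $\lambda_M(-\zeta)$ lands in the secondary half).
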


Note that since $\mu_{1,1}(U\otimes V)$ cannot be between $-4$ and $0$ there is no contradiction in these results.  

\subsection{The Interpolation Problem}\label{subsec: the interpolation problem}

As we previously mentioned, in order to construct a Brill-Noether divisor, it is necessary to find a sheaf $V$ for which $h^1(U\otimes V)= 0$ for the general $U \in \MMM$ and the easiest way to do so is to find a sheaf that is cohomologically orthogonal to $U$.
Cohomological orthogonality implies that $\chi(U \otimes V) = 0$.
The vanishing of that Euler characteristic is a strictly weaker condition.
For example, it might be the case that $h^0 = h^1 =1$ and $h^2=0$.
We would like an added condition which would make them equivalent. 
A bundle $V$ is \textit{non-special with respect to} $U$ if $\chi(U \otimes V)$ determines the cohomology groups (i.e. they are lowest ranks allowed by the Euler characteristic).
We can rephrase cohomological orthogonality as $V$ having $\chi(U \otimes V) = 0$ and being non-special with respect to $U$.
In general, there will be many such sheaves, but those that interest us will be those of ``minimum slope."
Finding a sheaf like this is a form of the interpolation problem.

\begin{intprob*}
Given invariants $\xi$ of a vector bundle and a polarization $H$ of $\QS$, find a stable vector bundle $V$ with minimum $\mu_H$ that is cohomologically orthogonal to the general element of $M(\xi)$.
\end{intprob*}
Note, if we restrict our interest to line bundles on $\QS$ and $M(\xi) = \left(\QS\right)^{[m]}$, this is the classical interpolation for points lying on $\QS$: find the ``lowest" bidegree $(a,b)$ such that $m$ points lie on a curve of type $(a,b)$.

By construction of the Brill-Noether divisors, any solution of the interpolation problem will give an effective Brill-Noether divisor.
Our goal is to give a method to construct Brill-Noether divisors which are sufficient to span the effective cone in many examples.
We show that they span in examples by providing an alternate construction of the divisors.
This description provides dual moving curves which prove that they are extremal divisors in the effective cone.

The alternate construction starts by using the cohomological vanishing guaranteed by these Brill-Noether divisors to resolve the general objects of $\MMM$.
In general, we then use these resolutions to construct maps from $\MMM$ to Picard rank one varieties that have positive dimensional fibers.
The Brill-Noether divisors will match up with the pull backs of an ample divisor on these Picard rank one varieties.
We will prove the extremality of each divisor using the dual moving curves that cover the fibers of the morphism. 

\subsection{Kronecker Moduli Spaces}
\label{subsec:kronecker modules}
The simpler varieties that we will map to are moduli spaces of \textit{Kronecker modules}.

The quivers that we will be interested in have two points and $N$ arrows in one direction between the points.
If there are multiple, say $N$, arrows with the same head and tail, we will only draw one arrow in our quivers but label that arrow with $N$.
$$
\begin{tikzpicture}[scale=3]
\node at (-1/2,0) {$p_0$};
\node at (1/2,0) {$p_1$};
\node at (0,0.1) {$N$};
\draw [->] (-.4,0) -- (.4,0) node[above] {};
\end{tikzpicture}
$$
\begin{deff} This type of quiver, with two vertices and arrows in only direction between them, is a \textit{Kronecker quiver}. \end{deff}
\begin{deff} A \textit{Kronecker $V$-module} is a representation of this quiver and is equivalent to a linear map $A \otimes V \to B$ where $V$ is a vector space of dimension $N$ and $A$ and $B$ are arbitrary vector spaces. \end{deff}
\begin{deff} The moduli space of semistable Kronecker $V$-modules with dimension vector $r = (a,b)$ is $Kr_N(a,b)$. \end{deff}
The \textit{expected dimension} of this space is
$$(\textnormal{edim}) \textnormal{ } Kr_N(a,b) = 1- \chi(r,r) = Nab-a^2-b^2+1.$$
For Kronecker moduli spaces, we also know that they are nonempty and of the expected dimension if their expected dimension is nonnegative \cite{Re:Re08}.
Another fact about Kronecker moduli spaces is that they are Picard rank one \cite{D:D87}.
We use this fact later when we create effective divisors on our moduli spaces $\MMM$ by constructing fibrations from them to these Kronecker moduli spaces and pulling back a generator of the ample cone of $Kr_N(a,b)$.
In order to do this, we need to get Kronecker modules from maps between \textit{exceptional bundles}.

\subsubsection{Kronecker Modules from Complexes}
Given a complex $W:A^a\to B^b$ where $\{A,B\}$ is an exceptional pair, we get a Kronecker $\Hom(A,B)$-module $R$ with dimension vector $r = (a,b)$.
The properties of exceptional bundles tell us that homomorphisms of the Kronecker module are exactly the homomorphisms of $W$ and that $\chi(r,r') = \chi(W,W')$ where $R'$ is the Kronecker module corresponding to a complex $W':A^{a'}\to B^{b'}$.
We will get these complexes between exceptional bundles from resolutions of the general objects of our moduli spaces $\MMM$.


\section{Corresponding Exceptional Pairs}\label{sec:excpair}
In this section, we use exceptional pairs to identify the Brill-Noether divisors that we expect to span the effective cone by identifying possible solutions to the interpolation problem.

Let $U \in \MMM$ be a general element.
Recall that to solve the interpolation problem we wanted to find bundles that were cohomologically orthogonal to $U$ and that being cohomologicaly orthogonal is equivalent to $V$ having $\chi(U \otimes V) = 0$ and $V$ being cohomologically non-special with respect to $\xi$.
Now, we are able find all cohomologically orthogonal bundles by imposing each of these two conditions separately.


\subsection{Bundles with Vanishing Euler Characteristic}
\label{subsec: bundles with vanishing Euler characteristic}
First, we find bundles $V$ with $\chi(U \otimes V) = 0$.
As we are trying to compute the effective cone, scaling Chern characters is relatively unimportant.
In particular, we can scale a Chern character so that $\ch_0 =1$ (unless it was 0 to start). 
\begin{definition}\label{def: orthogonal surface}
When a Chern character $\xi$ has positive rank, the \textit{orthogonal surface to $\xi$} is
$$Q_\xi = \{ (\mu,\Delta) : (1,\mu,\Delta) \textnormal{ lies in } \xi^\perp \} \subset \RR^3.$$
\end{definition}
Using the orthogonal surface rather than the full $\xi^\perp$ has the advantage of working in the three dimensional $\left( \mu, \Delta\right)$-space instead of in the full four dimensional $K(\QS)$.
We define the \textit{reference surface}, $Q_{\xi_0}$, to be the orthogonal surface to $\xi_0 = \ch(\OO) = (1,(0,0),0)$.

Using Hirzebruch-Riemann-Roch to compute the equation of the orthogonal surface to $\xi$ gives the formula 
$$Q_\xi : P(\mu(\xi) +\mu) -\Delta(\xi) = \Delta$$
where $P(x,y) = (x+1)(y+1)$. 
This equation defines a saddle surface that is a shift of the reference surface.
It has unique saddle point at the point $\left(-1-x_0,-1-y_0\right)$ where $(x_0,y_0) = \mu(\xi)$.
Consequently, any two such surfaces intersect in a parabola that lies over a line in the slope plane.

Using this language, the condition that $\chi(U \otimes V) = 0$ can be rephrased as saying that $V$ must lie on $Q_\xi$.
In other words, we can restrict our search for solutions to the interpolation problem to bundles that lie on the orthogonal surface (to $\xi$).

As a side note, we can now give an alternative description of each fractal part of the $\delta$ surface using orthogonal surfaces. 
Given an exceptional bundle $E_\alpha$, $\delta_E$ can equivalently be written as 
\begin{displaymath}
   \delta_E(\mu) = \left\{
     \begin{array}{lr}
       Q_{E^*}(\mu) & \textnormal{ if } \mu_{1,1}(E)-4 <\mu_{1,1}(\mu) \textnormal{ and } \gbar(\mu)<\gbar(E) \\
       Q_{E^*(K)}(\mu) & \textnormal{ if } \mu_{1,1}(\mu) <\mu_{1,1}(E) +4  \textnormal{ and } \gbar(E)<\gbar(\mu) 
     \end{array}
   \right.
\end{displaymath}
Thus, every saddle subsurface of the surface $\delta(\mu) =\Delta$ can be seen to be a portion of some surface $Q_{E_\alpha}$.


\subsection{Cohomologically Non-special Bundles}
\label{subsec: cohomologically non-special bundles}
Now that we have found the bundles with $\chi(U \otimes V) = 0$, we would like to impose the second condition of cohomological orthogonality, being \textit{non-special}.
In some cases, we can find numerical conditions defining being non-specialty as well.
Those conditions will be in terms of certain exceptional bundles that we have to pick out.
Picking them out begins with studying $Q_\xi$ again.

Note that $Q_\xi$ intersects the plane $\Delta =\frac{1}{2}$.
The exceptional bundles that we want to pick out are those exceptional bundles that control this intersubsection.

\begin{definition}\label{def:controllingexceptional}
A \textit{controlling exceptional bundle} of $\xi$ is an exceptional bundle, $E_{\alpha}$, for which there exists a slope $\nu$ for which $\delta(\nu) = \delta_{E_\alpha}(\nu)$ and $Q_\xi(\nu) = \frac{1}{2}$.
\end{definition}

As promised above, each controlling exceptional bundle will provide a necessary condition for a stable Chern character to be non-special with respect to the general element of $\MMM$.
The condition that a controlling exceptional bundles imposes on a Chern character is that the Chern character must be on or above a surface corresponding to that exceptional bundle.
\begin{definition}\label{def:correspondingsurfaces}
The \textit{corresponding surface} to an exceptional bundle $E_\alpha$ for $\xi$ is defined as
\begin{displaymath}
   Q_{\alpha,\xi}(\nu) = \left\{
     \begin{array}{lr}
       Q_{E_\alpha^*}(\nu) & : \textnormal{if } \chi\left(E_\alpha^*,U\right) >0\textnormal{ }\\
       Q_{E_\alpha^*(K)}(\nu) & : \textnormal{if } \chi\left(E_\alpha^*,U \right) <0.
     \end{array}
   \right.
\end{displaymath}
\end{definition}
Then in some, if not all, cases, the Chern characters $\nu$ such that $Q_{\alpha,\xi}(\nu)>0$ for all controlling bundles $E_\alpha$ of $\xi$ are precisely the non-special Chern characters with respect to $\xi$.


\subsection{Potential Extremal Rays}
\label{subsec: potential extremal rays}
The solutions to interpolation are the intersubsection of the orthogonal surface and the maximum of the corresponding surfaces.
Each part of this intersubsection is where the orthogonal surface intersects a corresponding surface, i.e. where $Q_\xi = Q_{\alpha,\xi}$ for a controlling exceptional $E_\alpha$ of $\xi$. 
The corners of the intersubsection are where the orthogonal surface intersects two different corresponding surfaces, i.e. where $Q_\xi = Q_{\alpha,\xi} = Q_{\beta,\xi}$ for controlling exceptionals $E_\alpha$ and $E_\beta$ of $\xi$.

As $\alpha$, $\beta$, and $\xi$ are three linearly independent rational Chern characters, $Q_\xi \cap Q_{\alpha,\xi} \cap Q_{\beta,\xi}$ is a single point that corresponds to the intersubsection of the 3-planes $\alpha^\perp \cap \beta^\perp \cap \gamma^\perp$, $(\alpha+K)^\perp \cap \beta^\perp \cap \gamma^\perp$, $\alpha^\perp \cap (\beta+K)^\perp \cap \gamma^\perp$, or $(\alpha+K)^\perp \cap (\beta+K)^\perp \cap \gamma^\perp$. 
We determine which intersubsection it is by which cases of $Q_{\alpha,\xi}$ and $Q_{\beta,\xi}$ we are using.
We want to find the corners made in this way in order to get effective divisors.
To find those divisors, we first define all of the possible triple intersubsections that might work. 

\begin{definition}\label{def:corner}
The \textit{corresponding orthogonal point} of a pair of controlling exceptional bundles $E_\alpha$ and $E_\beta$ is one of the following\\
(1) the unique point $\left(\mu^+,\Delta^+ \right) \in Q_\xi \cap Q_{\alpha,\xi} \cap Q_{\beta,\xi}$,\\
(2) $\beta$ if $\beta \in Q_\xi \cap Q_{\alpha,\xi}$, or \\
(3) $\alpha$ if $\alpha \in Q_\xi \cap Q_{\beta,\xi}$.\\
\end{definition}
(2) and (3) can occur simultaneously, but you can treat them individually so we will say ``the orthogonal point".

Some of the corresponding orthogonal points will not actually be what we want, as they can both fail to satisfy interpolation (by being below one of the other corresponding surfaces) or by not being extremal (as they have a slope that is not linearly independent of other solutions).
We want to pick out the only the solutions that satisfy interpolation and are extremal among those solutions.

\begin{definition}\label{def:controllingpair}
A \textit{controlling pair} of $\xi$ is an exceptional pair of controlling exceptional bundles $E_\alpha$ and $E_\beta$ of $\xi$ with a corresponding orthogonal point $\left( \mu^+, \Delta^+ \right)$ that is stable. 
\end{definition}

We now want to turn these points back into Chern characters for convenience.
\begin{definition}\label{def:orthogonalcherncharacter}
A \textit{potential (primary) orthogonal Chern character} $\xi^+$ to $\xi$ is defined by any character $\xi^+ = \left(r^+,\mu^+,\Delta^+ \right)$ where $r^+$  is sufficiently large and divisible and $(\mu^+,\Delta^+)$ is the corresponding orthogonal point of a controlling pair of $\xi$.
\end{definition}

We call them \textit{potential} because we will see in the next section that we need a few additional conditions to make sure that they actually span an extremal ray of the effective cone.
We will give an approach to showing that some of the potential primary orthogonal Chern characters span the solutions of the interpolation problem and the effective cone of $\MMM$ is spanned by the Brill-Noether divisors $D_V$ for many examples of $\MMM$ where $V$ are bundles whose Chern character is an orthogonal Chern character $\xi^+$.

The behavior of these extremal rays depends on the sign of $\chi(E_{\alpha}^*,U)$ and $\chi(E_{\beta}^*,U)$.
Keeping the identification of $\textnormal{NS}\left( M(\xi) \right) \isom \xi^\perp$ in mind, recall that the primary half of the space corresponds to characters of positive rank.
\newline
(1) If $\chi(\xi_{-\alpha},\xi) > 0$ and $\chi(\xi_{-\beta},\xi) > 0$, the ray is spanned by a positive rank Chern character in $\xi^\perp \cap \xi_{-\alpha}^{\perp} \cap \xi_{-\beta}^{\perp}$. 
\newline
(2) If $\chi(\xi_{-\alpha},\xi) < 0$ and $\chi(\xi_{-\beta},\xi) > 0$, the ray is spanned by a positive rank Chern character in $\xi^\perp \cap \left(\xi_{-\alpha+K}\right)^{\perp} \cap \xi_{-\beta}^{\perp}$. 
\newline
(3) If $\chi(\xi_{-\alpha},\xi) < 0$ and $\chi(\xi_{-\beta},\xi) < 0$, the ray is spanned by a positive rank Chern character in $\xi^\perp \cap \left(\xi_{-\alpha+K}\right)^{\perp} \cap \left(\xi_{-\beta+K}\right)^{\perp}$. 
\newline
(4) If $\chi(\xi_{-\alpha},\xi) = 0$ or $\chi(\xi_{-\beta},\xi) = 0$, the ray is spanned by $\alpha$ or $\beta$, respectively.


\section{The Beilinson Spectral Sequence}
\label{sec: Beilinson Spec seq}
In this section, we find a resolution of the general object $U \in \MMM$ for each orthogonal Chern character via the generalized Beilinson spectral sequence.
In the next section, we use these resolutions to construct fibrations $\MMM \dashrightarrow Kr_V(m,n)$.
In the second last section, these fibrations will give us effective divisors on $\MMM$.

An orthogonal Chern character already has exceptional pairs associated to it.
In order to use the spectral sequence, we have to complete each of those exceptional pairs to a coil.

The coil we use to resolve $U$ depends on the behavior of the extremal ray that we exhibited in the last section.
\newline
(1) If $\chi\left( E_{-\alpha},U \right)\geq 0$ and $\chi\left( E_{-\beta},U \right)\geq 0$, we will decompose $U$ according to the coil 

$\left(F_0^*,F_{-1}^*,E_{-\beta},E_{-\alpha}\right)$ where $(F_0,F_1)$ is a minimally ranked right completion pair of $(E_\alpha,E_\beta)$.
\newline
(2) If $\chi\left( E_{-\alpha},U \right)\leq 0$ and $\chi\left( E_{-\beta},U \right)\geq 0$, we will decompose $U$ according to the coil 

$\left(E_{-\alpha}(K),F_0^*,F_{-1}^*,E_{-\beta}\right)$ where $(F_0,F_1)$ is a minimally ranked right completion pair of $(E_\alpha,E_\beta)$.
\newline
(3) If $\chi\left( E_{-\alpha},U \right)\leq 0$ and $\chi\left( E_{-\beta},U \right)\leq 0$, we will decompose $U$ according to the coil 

$\left(E_{-\beta}(K),E_{-\alpha}(K),F_0^*,F_{-1}^* \right)$ where $(F_0,F_1)$ is a minimally ranked right completion pair of $(E_\alpha,E_\beta)$.


The spectral sequence will only give a resolution under some assumptions on the controlling pairs.
These are the additional conditions needed to make a potential orthogonal Chern character span an extremal ray of the effective cone.
We call those controlling pairs that satisfy the needed conditions \textit{extremal pairs}; there will be a different definition for if a controlling pair is extremal based on the signs of some Euler characteristics so we defer the definition to the following three subsections.

Extremal pairs pick out the exact Chern characters that correspond to extremal effective divisors using our approach as promised in the previous section.
\begin{definition}
\label{def: primary orthogonal chern character}
A \textit{primary orthogonal Chern character} is the potential primary orthogonal Chern character associated to an extremal pair
\end{definition}

It is a current area of research to show the following conjecture for extremal pairs (including those as defined analogously in the next three subsections).
\begin{conj}
Every $\xi$ above the Rudakov $\delta$-surface has an extremal pair and every controlling exceptional bundle of it that is in an extremal pair is in two extremal pairs.
\end{conj}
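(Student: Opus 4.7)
The plan is to analyze the intersection curve $Q_\xi \cap \{\Delta = \tfrac{1}{2}\}$ and exploit the saddle-surface structure of both $Q_\xi$ and the Rudakov $\delta$-surface. First I would show that when $\xi$ sits strictly above $\delta$, the orthogonal surface $Q_\xi$ meets the horizontal plane $\Delta = \tfrac{1}{2}$ in a smooth convex curve $\gamma_\xi$ in the $\mu$-plane, and that $\gamma_\xi$ lies over the open region $\{\delta < \tfrac{1}{2}\}$. Since $\delta$ is defined as the supremum of the saddle pieces $\delta_E$ over exceptional bundles, and each such piece dominates only on a bounded cell by Rudakov's classification, the projection of $\gamma_\xi$ subdivides into finitely many arcs, each sitting over a cell on which a single exceptional $E_\alpha$ realizes $\delta = \delta_{E_\alpha}$. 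By Definition \ref{def:controllingexceptional}, every such $E_\alpha$ is a controlling exceptional of $\xi$, so at least one exists.

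Next I would establish that two controlling exceptionals $E_\alpha, E_\beta$ whose cells abut along $\gamma_\xi$ necessarily form an exceptional pair. This ought to follow from the coil structure assembled in Section \ref{subsec: exceptional collections}: the ridge along which two saddle pieces $\delta_{E_\alpha}$ and $\delta_{E_\beta}$ meet lies in $Q_{E_\alpha} \cap Q_{E_\beta}$, and by Rudakov's pairwise-reconstruction theorem adjacent exceptional bundles on the $\delta$-surface are linked by mutation, hence form an exceptional pair. To upgrade this to a controlling pair in the sense of Definition \ref{def:controllingpair}, I would solve the three linear equations $Q_\xi = Q_{\alpha,\xi} = Q_{\beta,\xi}$ for the corresponding orthogonal point $(\mu^+,\Delta^+)$, verify $\Delta^+ \geq \tfrac{1}{2}$ from the fact that this point lies on $Q_\xi$ above the $\gamma_\xi$ region, and invoke Theorem \ref{thm:modulispace} to conclude stability provided the point lies above $\delta$; this last inequality should be forced by the separating-hyperplane role played by the controlling pair.

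For the second half of the statement, once $E_\alpha$ is a controlling exceptional appearing in some extremal pair, its cell along $\gamma_\xi$ has two boundary endpoints (it cannot be an entire component, by strict convexity of the quadric $Q_\xi$ restricted to the plane $\Delta = \tfrac{1}{2}$ and convexity of the neighboring saddle pieces). Each endpoint produces a second controlling exceptional $E_\beta$, and hence a controlling pair $(E_\alpha,E_\beta)$. To check that at least one such pair on each side is extremal, I would perform a case analysis on the signs of $\chi(E_{-\alpha},U)$ and $\chi(E_{-\beta},U)$, matching one of the three decomposition coils listed at the start of Section \ref{sec: Beilinson Spec seq}, and then verify that the associated Beilinson spectral sequence of Theorem \ref{thm:specseq} collapses to a three-term resolution of the general $U \in M(\xi)$.

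The main obstacle will be this final collapse. Because there is no closed-form classification of exceptional bundles on $\QS$ paralleling the Drézet--Le Potier description on $\PP^2$, every argument must be structural: the finiteness of the cell decomposition of $\gamma_\xi$, the matching of endpoints with mutation-adjacent exceptional pairs, and the vanishing of all unwanted $\Ext^q(A_{-p},U)$ in the $E_1^{p,q}$ page must be deduced from combinatorics of coils rather than by enumeration. A reasonable first reduction is to assume the completion pair $(F_0,F_1)$ of $(E_\alpha,E_\beta)$ is minimally ranked and to try to induct on $\mathrm{rk}(F_0)+\mathrm{rk}(F_1)$ using mutation; this mirrors the role played by the minimally ranked completion pair in Rudakov's analysis and would reduce the general case to a base case for small-rank controlling pairs, which can be checked by the explicit examples (Hilbert schemes of low $n$) treated in Section \ref{sec: examples}.
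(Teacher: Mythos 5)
This statement is not proved in the paper at all: it is stated as a conjecture, and the paper explicitly says that establishing it is ``a current area of research.'' So there is no proof of record to compare against, and your text should be judged as a research plan. As such it is a reasonable outline, but it is not close to a proof, and the gaps sit exactly where the actual difficulty of the conjecture lies. The notion of \emph{extremal pair} (Definitions \ref{def:extremal pair}, \ref{def:neg extremal pair}, \ref{def:pos extremal pair}) is not ``controlling pair whose spectral sequence collapses''; it is a controlling pair satisfying conditions (1)--(6), including the sign/$\Delta_i$ conditions (3)--(4), global generation of the four $\sheafhom$ sheaves in (5), and the prioritary condition (6) for every bundle sitting in the relevant triangle. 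The collapse of the Beilinson spectral sequence (Theorem \ref{thm:resolution-neg pos} and its variants) is a \emph{consequence} of these hypotheses, so your final step conflates the conclusion with what must be verified. Your sketch says nothing about how to establish (5) or (6) in general, and since there is no Dr\'ezet--Le Potier-style classification of exceptional bundles on $\QS$, this is precisely the structural input that is missing; asserting that it ``must be deduced from combinatorics of coils'' names the obstacle without overcoming it. The proposed induction on $\rk(F_0)+\rk(F_1)$ is a sensible heuristic, but no inductive step is given, and the base cases in Section \ref{sec: examples} were verified for specific $\xi$ (Hilbert schemes with $n\le 16$), not for all $\xi$ with small-rank controlling pairs.

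Two further steps are asserted rather than argued. First, that controlling exceptionals whose cells abut along $Q_\xi\cap\{\Delta=\tfrac12\}$ form an exceptional pair: Rudakov's pairwise-reconstruction theorem says all exceptional collections arise by mutation from the standard one, but it does not by itself say that the two bundles dominating adjacent cells of the $\delta$-surface form an exceptional pair; that adjacency-to-pair statement needs its own proof. Second, stability of the corresponding orthogonal point $(\mu^+,\Delta^+)$: $\Delta^+\ge\tfrac12$ does not imply $\Delta^+\ge\delta(\mu^+)$, since the $\delta$-surface rises above $\tfrac12$ near exceptional slopes, and your parenthetical that the needed inequality ``should be forced by the separating-hyperplane role'' is exactly the kind of claim that must be proved, not assumed (this is built into Definition \ref{def:controllingpair}, which requires stability). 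Finally, the second half of the conjecture --- that a controlling bundle in one extremal pair lies in \emph{two} extremal pairs, which is what the paper needs to link neighboring extremal rays by moving curves --- is reduced in your plan to ``each endpoint produces a controlling pair'' plus an unexecuted check that those pairs are extremal; that check is again conditions (3)--(6), so the plan is circular at the point where the conjecture has content.
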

Proving the conjecture would show that the process laid out in this paper computes the effective cone of $\MMM$ for all $\xi$ above the $\delta$ surface.

\subsection{The ``Mixed Type" Spectral Sequence.}
\label{subsec:spec seq neg and pos}
Assume $\chi\left( E_{-\alpha},U \right) \leq 0$ and $\chi\left( E_{-\beta},U \right) \geq 0$.
Let the right mutated coil of $\left(E_{-\alpha}(K),F_0^*,F_{-1}^*,E_{-\beta}\right)$ be $\left(E_{-\beta},E_{-1}^*,E_{-2}^*,E_{-\alpha}\right)$.
Let $\Delta_i$ be as in the spectral sequence with input these two coils.
\begin{definition}\label{def:extremal pair}
A (mixed type) controlling pair of $\xi$, $(E_\alpha,E_\beta)$, with corresponding orthogonal slope and discriminant $\left( \mu^+, \Delta^+ \right)$ is called \textit{extremal} if it satisfies the following conditions:\newline
(1) They are within a unit in both slope coordinates. \newline
(2) $\mu_{1,1}(E_\alpha)$, $\mu_{1,1}(E_{-2})$, $\mu_{1,1}(E_{-1})$, and $\mu_{1,1}(E_\beta)$ are all greater than $\mu_{1,1}(U)-4$.\newline
(3) ($\Delta_2 = 1$ and $\chi\left( E_{-2}^*,U\right)\geq 0$) or ($\Delta_2 = 0$ and $\chi\left( E_{-2}^*,U\right)\leq 0$).\newline
(4) ($\Delta_1 = 1$ and $\chi\left( E_{-1}^*,U\right)\geq 0$) or ($\Delta_1 = 0$ and $\chi\left( E_{-1}^*,U\right)\geq 0$).\newline
(5)$\sheafhom(E_{-\alpha}(K),F_{-1}^*) \textnormal{, } \sheafhom(E_{-\alpha}(K),E_{-\beta}) \textnormal{, } \sheafhom(F_0^*,F_{-1}^*) \textnormal{, and } \sheafhom(F_0^*,E_{-\beta})$
are all globally generated.\newline
(6) Any bundle sitting in a triangle $  \left(F_{-1}^*\right)^{m_{1}} \osum E_{-\beta}^{m_{0}} \to U \to \left(E_{-\alpha}(K)^{m_{3}} \osum \left( F_{0}^*\right)^{m_{2}}\right)[1]$ is prioritary.
\end{definition}

Given an extremal pair, we can resolve the general object of $\MMM$.

\begin{thm}\label{thm:resolution-neg pos}
The general $U \in M(\xi)$ admits a resolution of the following form 
$$0  \to E_{-\alpha}(K)^{m_{3}} \osum \left( F_{0}^*\right)^{m_{2}} \fto{\phi} \left(F_{-1}^*\right)^{m_{1}} \osum E_{-\beta}^{m_{0}} \to U \to 0.$$
\end{thm}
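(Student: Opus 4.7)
The plan is to apply Gorodentsev's generalized Beilinson spectral sequence (Theorem~\ref{thm:specseq}) with a coil arranged so that the four objects $E_{-\alpha}(K), F_0^*, F_{-1}^*, E_{-\beta}$ appearing in the resolution play the role of the $B_p$'s. Concretely, take $\mathcal{A} = (E_{-\beta}, E_{-1}^*, E_{-2}^*, E_{-\alpha})$, whose left mutation is $\mathcal{B} = (E_{-\alpha}(K), F_0^*, F_{-1}^*, E_{-\beta})$. The $E_1$-page
$$E_1^{p,q} = B_p \otimes \mathrm{Ext}^{q - \Delta_p}(A_{-p}, U)$$
converges to $U$ in total degree $0$ and to $0$ in every other total degree.

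The first step is to show that for general $U \in M(\xi)$ the $E_1$-page is concentrated on the two antidiagonals $p + q = 0$ and $p + q = -1$, with $E_{-\beta}$ and $F_{-1}^*$ on the former and $F_0^*$ and $E_{-\alpha}(K)$ on the latter. The $(1,1)$-slope bounds of condition~(2) in Definition~\ref{def:extremal pair}, together with the stability of $U$ and of each exceptional bundle appearing in $\mathcal{A}$, force every relevant $\mathrm{Ext}^2(A_{-p}, U)$ to vanish via Serre duality, eliminating the top row in each column. The mixed-type sign assumptions $\chi(E_{-\alpha}, U) \leq 0$ and $\chi(E_{-\beta}, U) \geq 0$, paired with the sign choices on $\chi(E_{-2}^*, U)$ and $\chi(E_{-1}^*, U)$ selected in conditions~(3) and~(4), then invoke upper semicontinuity of Ext-dimensions to force, column by column, exactly one of $\mathrm{Hom}$ or $\mathrm{Ext}^1$ to vanish on the general $U$: when the relevant $\chi$ is nonnegative the generic $U$ has vanishing $\mathrm{Ext}^1$, and when it is nonpositive the generic $U$ has vanishing $\mathrm{Hom}$. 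Tracking the shifts $\Delta_p$ (with $\Delta_0 = 0$ and $\Delta_3 = 1$, the other two determined by regularity of the intermediate mutations) then places each surviving entry on the prescribed antidiagonal, and the multiplicities $m_i$ are the dimensions of those entries.

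The second step is to argue that the spectral sequence degenerates at $E_2$ to a two-term complex of the claimed shape. Condition~(1), that consecutive bundles in $\mathcal{B}$ differ by at most a unit in each slope coordinate, obstructs the existence of any higher differential $d_r$ for $r \geq 2$: such a map would require a slope jump exceeding what $\sheafhom$ between the bundles can support. Hence $E_\infty = E_2$ and convergence to $U$ in total degree $0$ produces the exact four-term complex
$$0 \to E_{-\alpha}(K)^{m_3} \oplus (F_0^*)^{m_2} \xrightarrow{\phi} (F_{-1}^*)^{m_1} \oplus E_{-\beta}^{m_0} \to U \to 0.$$

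The main obstacle is verifying that this complex is genuinely a resolution, i.e., that the leftmost map $\phi$ is injective (and the rightmost surjective) for the general $U$. The global-generation hypotheses of condition~(5) ensure that the relevant evaluation maps between the $B_p$'s are surjective, so a generic morphism in the space of candidate $\phi$'s degenerates only on a proper closed subvariety of expected codimension. The prioritary hypothesis of condition~(6) then guarantees that the locus of sheaves admitting such a resolution is open in the moduli stack of prioritary sheaves, and since prioritary sheaves form a dense open substack of $M(\xi)$, the resolution persists on a nonempty open subset of $M(\xi)$. The delicate quantitative point is matching the dimension of the space of complexes $\phi$, computable from the Kronecker-module data developed in Section~\ref{sec: The Kron. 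Fib.}, against the expected codimension of its degeneracy locus, which is where the extremal-pair inequalities play a numerical role.
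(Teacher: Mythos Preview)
Your overall strategy is inverted relative to the paper's, and this inversion creates a genuine gap.

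The paper does \emph{not} start from a general $U\in M(\xi)$ and argue that its Beilinson spectral sequence collapses. Instead it runs the argument in the opposite direction: begin with a \emph{general map}
\[
\phi\in\Hom\bigl(E_{-\alpha}(K)^{m_3}\oplus(F_0^*)^{m_2},\,(F_{-1}^*)^{m_1}\oplus E_{-\beta}^{m_0}\bigr),
\]
use the global-generation hypotheses (condition~(5)) together with a Bertini-type statement to show $\phi$ is injective with torsion-free cokernel $U$, and then verify \emph{for this particular $U$} that the four required Ext-vanishings hold. Those vanishings are checked directly from the resolution using the orthogonality relations built into the coils $(F_0^*,F_{-1}^*,E_{-\beta},E_{-\alpha})$, $(E_{-\alpha}(K),F_0^*,E_{-\beta},E_{-1}^*)$, etc. Finally, condition~(6) guarantees each such $U$ is prioritary, the family of cokernels is complete, and Walter's irreducibility of the prioritary stack forces this family to dominate $M(\xi)$.

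Your step~1 appeals to upper semicontinuity to conclude that ``when the relevant $\chi$ is nonnegative the generic $U$ has vanishing $\Ext^1$.'' Semicontinuity by itself does not give this: it only says that if \emph{some} $U$ has the vanishing, then so does the general one. You have not produced any such $U$. This is exactly the existence statement the paper's construction supplies, and without it your argument is circular---you need the resolution to verify the vanishings, but you are trying to obtain the resolution from the vanishings.

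Your step~2 also misreads condition~(1): it constrains the slopes of $E_\alpha$ and $E_\beta$, not of consecutive bundles in $\mathcal{B}$, and the paper never uses a ``slope-jump'' obstruction to kill higher differentials. Once the vanishings of step~3 in the paper are in hand, the shape of the $E_1$-page already forces the two-term resolution; no separate degeneration argument is needed.
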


\begin{proof}
Consider a bundle $U$ defined by the sequence 
$$0  \to E_{-\alpha}(K)^{m_{3}} \osum \left( F_{0}^*\right)^{m_{2}} \fto{\phi} \left(F_{-1}^*\right)^{m_{1}} \osum E_{-\beta}^{m_{0}} \to U \to 0$$
where the map $\phi \in \Hom\left(E_{-\alpha}(K)^{m_{3}} \osum \left( F_{0}^*\right)^{m_{2}}, \left(F_{-1}^*\right)^{m_{1}} \osum E_{-\beta}^{m_{0}}\right)$ is general.

The proof proceeds in 4 steps: calculate that $\ch(U) = \xi$, show $\phi$ is injective, confirm the expected vanishings in the spectral sequence, and prove that $U$ is stable.

\textit{Step 1:} Calculate $\ch(U) = \xi$. 
We do not know if $\phi$ is injective yet, but we can compute the Chern character of the mapping cone of $\phi$ in the derived category. 
Assuming $\phi$ is injective, this computes the Chern character of $U$.

This computation follows from the generalized Beilinson spectral sequence's convergence to $U$. 
Specifically, we have a spectral sequence with $E_1$-page 
\begin{align*}
E_{-\alpha}(K) \otimes \CC^{m_{32}} & \to F_0^* \otimes \CC^{m_{22}} \to F_{-1}^* \otimes \CC^{m_{12}} \to 0 \\
E_{-\alpha}(K) \otimes \CC^{m_{31}} & \to F_0^* \otimes \CC^{m_{21}} \to F_{-1}^* \otimes \CC^{m_{11}} \to  E_{-\beta} \otimes \CC^{m_{01}} \\
0 & \to F_0^* \otimes \CC^{m_{20}} \to F_{-1}^* \otimes \CC^{m_{10}} \to E_{-\beta} \otimes \CC^{m_{00}} \\
\end{align*}
that converges in degree 0 to the sheaf $U$ and to 0 in all other degrees.
We omitted all nonzero rows and columns.
In particular, row 3 is zero by the vanishing of $h^2$ for all the sheaves (follows from Def. \ref{def:extremal pair}). 
Also note that either the top or bottom element of each of the middle two rows vanishes (depending on the value of $\Delta_1$ and $\Delta_2$). 
An easy computation shows that 
\begin{align*}\ch(U) = &-(m_{32}-m_{31})\ch(E_{-\alpha}(K))+(m_{22}-m_{21}+m_{20})\ch(F_0^*)\\ &-(m_{12}-m_{11}+m_{10})\ch(F_{-1}^*)+(m_{00}-m_{01})\ch(E_{-\beta}).\end{align*}
In our situation, we see that this gives
$$\xi = -m_{-3}\ch(E_{-\alpha}(K))+m_{-2}\ch(F_0^*)-m_{-1}\ch(F_{-1}^*)+m_0\ch(E_{-\beta})$$
where the $m_i$ are defined in the obvious way.

\textit{Step 2:} Show $\phi$ is injective. 
The sheaves 
$$\sheafhom(E_{-\alpha}(K),F_{-1}^*) \textnormal{, } \sheafhom(E_{-\alpha}(K),E_{-\beta}) \textnormal{, } \sheafhom(F_0^*,F_{-1}^*) \textnormal{, and } \sheafhom(F_0^*,E_{-\beta})$$
are all globally generated by the controlling pair being an extremal pair.
Those bundles being globally generated immediately implies that
$$\sheafhom(E_{-\alpha}(K)^{m_{3}}\osum \left(F_0^*\right)^{m_{2}}, \left(F_{-1}^*\right)^{m_{1}} \osum E_{-\beta}^{m_0})$$
is globally generated as well.
Using a Bertini-type theorem [Prop 2.6, \cite{Hu:Hu16}] and the fact that the virtually computed rank of $U$ is positive, we see that $\phi$ is injective.
\newline
\textit{Step 3:} Verify $U$'s spectral sequence has the correct vanishings.
We know that $\chi(E_{-\alpha},U) \leq 0$ and $\chi(E_{-\beta},U) \geq 0$. 
We also know that ($\chi(E_{-2}^*,U) \leq 0$ and $\Delta_2=0$) or ($\chi(E_{-2}^*,U) \geq 0$ and $\Delta_2=1$).
Analogously, we know that ($\chi(E_{-1}^*,U) \leq 0$ and $\Delta_1=0$) or ($\chi(E_{-1}^*,U) \geq 0$ and $\Delta_1=1$).

Since we know that all of the groups other than possibly $\ext^1$ and $\hom$ vanish, it is enough to check that 
$$\hom(E_{-\alpha},U) = \ext^{\Delta_1}(E_{-1}^*,U) = \ext^{\Delta_2}(E_{-2}^*,U) = \ext^1(E_{-\beta},U) = 0.$$
These vanishings for the specific $U$ we have resolved will follow from the orthogonality properties of exceptional bundles and the relevant long exact sequences.
Once they vanish for a specific $U$, they will vanish for a general $U$ as needed.
We show these four vanishings in order.

First, $\hom(E_{-\alpha}, F_{-1}^*)$, $\hom(E_{-\alpha}, E_{-\beta})$, and $\ext^1(E_{-\alpha}, F_0^*)$ all vanish since $\left( F_0^*,F_{-1}^*, E_{-\beta},E_{-\alpha}\right)$ is a coil and  $\ext^1(E_{-\alpha}, E_{-\alpha}(K)) = \ext^1(E_{-\alpha}(K),E_{-\alpha}(K))$ which vanishes since exceptional bundles are rigid.
This gives $\hom(E_{-\alpha},U) = 0$ as desired.

For the next vanishing, we have two cases: $\Delta_1 = 0$ and $\Delta_1=1$.
We will assume that $\Delta_1=0$, and the other case is similar.
Assuming $\Delta_1=0$, we need to show that $\hom(E_{-1}^*, U)=0$.
Next, $\ext^1(E_{-1}^*, E_{-\alpha}(K))$, $\hom(E_{-1}^*, E_{-\beta})$, and $\ext^1(E_{-1}^*, F_0^*)$ all vanish since $\left( E_{-\alpha}(K),F_0^*, E_{-\beta},E_{-1}^*\right)$ is a coil.
Then the only remaining vanishing we need to prove is $\hom(E_{-1}^*,F_{-1}^*)$.
By assumption on $\Delta_1$, we can write $0 \to F_{-1}^* \to E_{-\beta}^a \to E_{-1}^* \to 0$ where $a= \chi(E_{-\beta},E_{-1})$.
From this resolution, we see $\hom(E_{-1}^*,F_{-1}^*) \hookrightarrow \hom(E_{-1}^*,E_{-\beta}) = 0$, and the vanishing follows.

For the next vanishing, we have two cases: $\Delta_1 = 0$ and $\Delta_1=1$.
We will assume that $\Delta_1=0$, and the other case is similar.
Assuming $\Delta_2=0$, we need to show that $\hom(E_{-2}^*, U)=0$.
Then, $\ext^1(E_{-2}^*, E_{-\alpha}(K))$, $\hom(E_{-2}^*, E_{-\beta})$, and $\hom(E_{-2}^*, F_{-1}^*)$ all vanish since $\left( E_{-\alpha}(K),F_{-1}^*, E_{-\beta},E_{-2}^*\right)$ is a coil.
Then the only remaining vanishing we need to prove is $\ext^1(E_{-2}^*,F_0^*)$.
By assumption on $\Delta_2$, we can write $0 \to F_0^* \to E_{-\beta}^a \to L_{E_{-1}^*}E_{-2}^* \to 0$ where $a = \chi(E_{-\beta},L_{E_{-1}^*}E_{-2}^*)$.
From this resolution we see that $\ext^1(E_{-2}^*,F_0^*) \isom \hom(E_{-2}^*,L_{E_{-1}^*}E_{-2}^*)$ since $\hom(E_{-2}^*, E_{-\beta}) = \ext^1(E_{-2}^*, E_{-\beta})= 0$.
Thus, we reduce to showing that $\hom(E_{-2}^*,L_{E_{-1}^*}E_{-2}^*)=0$.
Again, by assumption on $\Delta_2$, we know that we can write $0 \to L_{E_{-1}^*}E_{-2}^* \to (E_{-1}^*)^a \to E_{-2}^* \to 0$ from which we can easily see $\hom(E_{-2}^*,L_{E_{-1}^*}E_{-2}^*) \hookrightarrow \hom(E_{-2}^*,E_{-1}^*) = 0$, and the vanishing follows.

Finally, we will show $\ext^1(E_{-\beta},U) = 0$. 
Then $\ext^1(E_{-\beta}, F_{-1}^*)$, $\ext^2(E_{-\beta}, E_{-\alpha}(K))$, and \\$\ext^2(E_{-\beta}, F_0^*)$ all vanish since $\left(E_{-\alpha}(K), F_0^*,F_{-1}^*, E_{-\beta}\right)$ is a coil while  $\ext^1(E_{-\beta}, E_{-\beta}) = 0 $ since exceptional bundles are rigid.
This gives the vanishing of $\ext^1(E_{-\beta},U)$. 
\newline
\textit{Step 4:} Prove that $U$ is stable.
Let 
$$S \subset \Hom\left(E_{-\alpha}(K)^{m_{3}}\osum \left(F_0^*\right)^{m_{2}},\left(F_{-1}^*\right)^{m_{1}} \osum E_{-\beta}^{m_0}\right)$$
be the open subset of sheaf maps that are injective and have torsion-free cokernels.

By the argument of 5.3 of \cite{CHW:CHW14}, it follows that $S$ is non-empty.

Consider the family $\rfrac{\mathcal{U}}{S}$ of quotients parametrized by $S$.
We need to show that $\mathcal{U}$ is a complete family of prioritary sheaves.
Recall that a prioritary sheaf is a torsion free sheaf $U$ such that
$\Ext^2(U,U(0,-1)) = 0$ or $\Ext^2(U,U(-1,0)) = 0$ or, equivalently in our case, that $\Hom(U,U(-1,-2)) =0$ or $\Hom(U,U(-2,-1)) =0.$
By Def. \ref{def:extremal pair}, the elements of $\mathcal{U}$ are prioritary.
Again by the general argument of 5.3 of \cite{CHW:CHW14}, the family is a complete family.
By Thm. 1 of \cite{Wa:Wa98}, the Artin stack of prioritary sheaves with Chern character $\xi$ is an irreducible stack that contains the stack of semistable sheaves with Chern character $\xi$ as a dense open subset.
It is then clear that $S$ parametrizes the general sheaves in $M(\xi)$.
\end{proof}


\subsection{The ``Negative Type" Spectral Sequence.}
\label{subsec:spec seq neg and neg}

Assume $\chi\left( E_{-\alpha},U \right)\leq 0$ and $\chi\left( E_{-\beta},U \right) \leq 0$.
Let the right mutated coil of $\left(E_{-\beta}(K), E_{-\alpha}(K),F_{0}^*,F_{-1}^*\right)$ be $\left(F_{-1}^*,F_{-2}^*,E_{-\gamma},E_{-\beta}\right)$.
Let $\Delta_i$ be as in the spectral sequence with input these two coils.

\begin{definition}\label{def:neg extremal pair}
A (negative type) controlling pair of $\xi$, $(E_\alpha,E_\beta)$, with corresponding orthogonal slope and discriminant $\left( \mu^+, \Delta^+ \right)$ is called \textit{extremal} if it satisfies the following conditions:\newline
(1) They are within a unit in both slope coordinates. \newline
(2) $\mu_{1,1}(E_\alpha)$, $\mu_{1,1}(F_{-1})$, $\mu_{1,1}(F_{-2})$, and $\mu_{1,1}(E_\beta)$ are all greater than $\mu_{1,1}(U)-4$.\newline
(3) ($\Delta_2 = 0$ and $\chi\left( E_{-\gamma},U\right)\leq 0$) or ($\Delta_2 = 1$ and $\chi\left( E_{-\gamma},U\right)\geq 0$).\newline
(4) $\Delta_1 = 0$, $\chi\left( F_{-1}^*,U\right)\geq 0$, and $\chi\left( F_{-2}^*,U\right)\geq 0$.\newline
(5) $\sheafhom(E_{-\beta}(K),F_{-1}^*) \textnormal{, } \sheafhom(E_{-\alpha}(K),F_{-1}^*) \textnormal{, and }  \sheafhom(F_{0}^*,F_{-1}^*)$ are all globally generated.\newline
(6) Any bundle sitting in a triangle $ F_{-1}^{m_{0}} \to U \to \left( \left(F_{0}^*\right)^{m_{1}} \osum E_{-\alpha}(K)^{m_{2}} \osum E_{-\beta}(K)^{m_{3}}\right)[1]$ is prioritary.
\end{definition}

Given an extremal pair, we can resolve the general object of $\MMM$.

\begin{thm}\label{thm:resolution-neg neg}
The general $U \in M(\xi)$ admits a resolution of the following forms 
$$0  \to E_{-\beta}(K)^{m_{3}} \osum E_{-\alpha}(K)^{m_{2}} \osum \left( F_{0}^*\right)^{m_{1}} \to \left(F_{-1}^*\right)^{m_{0}}  \to U \to 0.$$
\end{thm}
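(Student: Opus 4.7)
The plan is to mirror the four-step proof of Theorem \ref{thm:resolution-neg pos} virtually line for line, with the generalized Beilinson spectral sequence now applied to the coil $\mathcal{B}' = (E_{-\beta}(K), E_{-\alpha}(K), F_0^*, F_{-1}^*)$ whose right mutation is $\mathcal{A}' = (F_{-1}^*, F_{-2}^*, E_{-\gamma}, E_{-\beta})$. I will start by taking a general map
$$\phi \in \Hom\bigl(E_{-\beta}(K)^{m_3} \osum E_{-\alpha}(K)^{m_2} \osum (F_0^*)^{m_1},\ (F_{-1}^*)^{m_0}\bigr),$$
call its cokernel $U$, and show in turn that (1) $\ch(U) = \xi$, (2) $\phi$ is injective with torsion-free cokernel, (3) the spectral sequence converging to $U$ collapses to the claimed resolution, and (4) the resulting family is a complete family of prioritary sheaves.

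For Step 1, I would write down the $E_1$-page of the spectral sequence of Thm.\ \ref{thm:specseq} applied to $\mathcal{B}'$ and $\mathcal{A}'$. Condition (4) of Def.\ \ref{def:neg extremal pair} forces the two rightmost columns (coming from $F_{-1}^*$ and $F_{-2}^*$) to contribute only in $q=0$, while conditions (3) and the sign assumptions $\chi(E_{-\alpha}, U) \leq 0$, $\chi(E_{-\beta}, U) \leq 0$ concentrate the $E_{-\alpha}(K)$ and $E_{-\beta}(K)$ columns in $q = 1$ and $q = 2$ respectively; condition (2) kills the would-be $h^2$ contributions. Taking the alternating sum of Chern characters then yields
$$\xi = m_0 \ch(F_{-1}^*) - m_1 \ch(F_0^*) - m_2 \ch(E_{-\alpha}(K)) - m_3 \ch(E_{-\beta}(K))$$
for the correct multiplicities $m_i$, confirming $\ch(U) = \xi$. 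For Step 2, condition (5) of Def.\ \ref{def:neg extremal pair} gives global generation of $\sheafhom(E_{-\beta}(K), F_{-1}^*)$, $\sheafhom(E_{-\alpha}(K), F_{-1}^*)$, and $\sheafhom(F_0^*, F_{-1}^*)$, hence of the total $\sheafhom$ of the source to the target. Since the virtual rank of $U$ is positive, the Bertini-type theorem (Prop.\ 2.6 of \cite{Hu:Hu16}) applies and produces an open locus on which $\phi$ is injective with torsion-free cokernel.

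Step 3 is the most delicate bookkeeping step. I must verify
$$\hom(F_{-1}^*, U) \neq 0 \text{ only in degree } 0,\ \ \hom(F_{-2}^*, U) \neq 0 \text{ only in degree } 0,$$
$$\Ext^\bullet(E_{-\gamma}, U) \text{ concentrated as dictated by } \Delta_2,\ \ \Ext^\bullet(E_{-\beta}, U) \neq 0 \text{ only in degree } 1.$$
Each vanishing is verified in exactly the manner of Step 3 in the proof of Thm.\ \ref{thm:resolution-neg pos}: three of the four required vanishings are immediate from the coil axioms applied to appropriate rotations of $\mathcal{B}'$, and the last one in each case is reduced by a short exact sequence coming from the definition of $\Delta_i$ (writing $F_0^*$, $F_{-2}^*$, or $E_{-\gamma}$ as an extension built from adjacent exceptional bundles) and an easy $\Hom$ injection into a coil-orthogonal pair. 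The main obstacle here is the same as in the mixed case: carefully identifying which coil each rotation belongs to and picking the correct defining sequence for each $\Delta_i$; I expect two sub-cases depending on the value of $\Delta_2$.

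For Step 4, let $S$ be the open subset of the $\Hom$-space parametrizing injective $\phi$ with torsion-free cokernel; Step 2 already shows $S \neq \emptyset$. The family $\mathcal{U}/S$ consists of prioritary sheaves by Def.\ \ref{def:neg extremal pair}(6), and the argument of \S 5.3 of \cite{CHW:CHW14} identifies it as a complete family. Thm.\ 1 of \cite{Wa:Wa98} then says the stack of prioritary sheaves of character $\xi$ is irreducible and contains the semistable locus as a dense open, so the general $U$ produced by this construction lies in $M(\xi)$ and admits the desired resolution.
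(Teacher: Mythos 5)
Your proposal is correct and is exactly the argument the paper intends: the paper omits the proof of Theorem \ref{thm:resolution-neg neg}, stating only that it is similar to that of Theorem \ref{thm:resolution-neg pos}, and your four-step adaptation (Chern character count from the $E_1$-page, injectivity via global generation of the three $\sheafhom$ sheaves from Definition \ref{def:neg extremal pair}(5) and the Bertini-type result, the vanishings $\ext^{\geq 1}(F_{-1}^*,U)=\ext^{\geq 1}(F_{-2}^*,U)=0$, concentration for $E_{-\gamma}$ per $\Delta_2$, and $\hom(E_{-\beta},U)=0$ checked by coil orthogonality and the $\Delta_i$ exact sequences, followed by the prioritary/complete-family argument via \cite{CHW:CHW14} and \cite{Wa:Wa98}) is the intended transcription with the correct coils $(E_{-\beta}(K),E_{-\alpha}(K),F_0^*,F_{-1}^*)$ and $(F_{-1}^*,F_{-2}^*,E_{-\gamma},E_{-\beta})$ and the correct placement of each column in $q=0,0,1,2$.
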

As the proof is similar to the proof of the previous theorem, we omit its proof.

\subsection{The ``Positive Type" Spectral Sequence.}
\label{subsec:spec seq pos and pos}

Assume $\chi\left( E_{-\alpha},U \right)\geq 0$ and $\chi\left( E_{-\beta},U \right)\geq 0$.
Let the right mutated coil of $\left(F_{0}^*,F_{-1}^*,E_{-\beta}, E_{-\alpha}\right)$ be $\left(E_{-\alpha},E_{-\gamma},F_{1}^*(-K),F_{0}^*(-K)\right)$.
Let $\Delta_i$ be as in the spectral sequence with input these two coils.

\begin{definition}\label{def:pos extremal pair}
A (positive type) controlling pair of $\xi$, $(E_\alpha,E_\beta)$, with corresponding orthogonal slope and discriminant $\left( \mu^+, \Delta^+ \right)$ is called \textit{extremal} if it satisfies the following conditions:\newline
(1) They are within a unit in both slope coordinates. \newline
(2) $\mu_{1,1}(E_\alpha)$, $\mu_{1,1}(F_{1}(K))$, $\mu_{1,1}(F_{0}(K))$, and $\mu_{1,1}(E_\beta)$ are all greater than $\mu_{1,1}(U)-4$.\newline
(3) ($\Delta_1 = 1$ and $\chi\left(E_{-\gamma},U\right)\geq 0$) or ($\Delta_1 = 0$ and $\chi\left(E_{-\gamma},U\right)\leq 0$). \newline
(4) $\chi\left(F_{-1}^*(-K),U\right)\leq 0$, $\chi\left(F_{0}^*(-K),U\right)\leq 0$, and $\Delta_2=1$.\newline
(5) $\sheafhom(F_{-0}^*,F_{-1}^*) \textnormal{, } \sheafhom(F_{-0}^*,E_{-\beta}) \textnormal{, and }  \sheafhom(F_{0}^*,E_{-\alpha})$ are all globally generated.\newline
(6) Any bundle sitting in a triangle $ F_{-1}^{m_{2}} \osum E_{-\beta}^{m_{1}} \osum E_{-\alpha}^{m_{0}}\to U \to \left(F_{0}^*\right)^{m_{3}}[1] $ is prioritary.\\
\end{definition}

Given an extremal pair, we can resolve the general object of $\MMM$.


\begin{thm}\label{thm:resolution-pos pos}
The general $U \in M(\xi)$ admits a resolution of the following forms 
$$0  \to  \left( F_{0}^*\right)^{m_{3}} \to \left(F_{-1}^*\right)^{m_{2}} \osum E_{-\beta}^{m_{1}} \osum E_{-\alpha}^{m_{0}}  \to U \to 0.$$
\end{thm}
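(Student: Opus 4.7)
The plan is to follow the four-step strategy of Theorem~\ref{thm:resolution-neg pos} essentially verbatim, modifying only the bookkeeping to reflect that the resolution now has one summand on the left and three on the right (rather than two and two), and that both Euler characteristics $\chi(E_{-\alpha},U)$ and $\chi(E_{-\beta},U)$ are nonnegative.

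First, I would define $U$ as the cokernel of a general map
$$\phi\colon (F_0^*)^{m_3} \to (F_{-1}^*)^{m_2} \oplus E_{-\beta}^{m_1} \oplus E_{-\alpha}^{m_0}$$
and invoke Theorem~\ref{thm:specseq} with $\mathcal{B} = (F_0^*, F_{-1}^*, E_{-\beta}, E_{-\alpha})$ and its right mutation $\mathcal{A} = (E_{-\alpha}, E_{-\gamma}, F_1^*(-K), F_0^*(-K))$. The rows of the $E_1$ page are controlled by $\chi(E_{-\alpha},U)\ge 0$, the sign/parity of $\chi(E_{-\gamma},U)$ paired with $\Delta_1$ in condition (3), and by $\Delta_2=1$ together with $\chi(F_i^*(-K),U)\le 0$ in condition (4). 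Reading off the convergence to $U$ in degree zero gives an identity
$$\xi = m_0\,\ch(E_{-\alpha}) + m_1\,\ch(E_{-\beta}) + m_2\,\ch(F_{-1}^*) - m_3\,\ch(F_0^*),$$
confirming $\ch(U)=\xi$ for the virtual $U$.

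Second, condition (5) of Definition~\ref{def:pos extremal pair} says $\sheafhom(F_0^*,F_{-1}^*)$, $\sheafhom(F_0^*,E_{-\beta})$, and $\sheafhom(F_0^*,E_{-\alpha})$ are globally generated, so the sheaf
$$\sheafhom\bigl((F_0^*)^{m_3},(F_{-1}^*)^{m_2} \oplus E_{-\beta}^{m_1} \oplus E_{-\alpha}^{m_0}\bigr)$$
is globally generated. Together with the positivity of the virtual rank, a Bertini-type argument [Prop.~2.6, \cite{Hu:Hu16}] yields injectivity of $\phi$ for the general choice.

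Third, I would verify the vanishings that collapse the spectral sequence to the claimed shape, namely
$$\ext^1(E_{-\alpha},U)=0,\quad \ext^{1-\Delta_1}(E_{-\gamma},U)=0,\quad \ext^{\Delta_2-1+1}(F_1^*(-K),U)=0,\quad \hom(F_0^*(-K),U)=0.$$
Each vanishing reduces, via the short exact sequence defining the relevant mutation (regular or non-regular, according to $\Delta_1$ and $\Delta_2$), to orthogonality inside one of the coils containing the three bundles $F_0^*,F_{-1}^*,E_{-\gamma},E_{-\alpha},E_{-\beta}$, exactly as in Theorem~\ref{thm:resolution-neg pos}. The rigidity of exceptional bundles handles the terms involving $F_0^*(-K)$ paired against $F_0^*$ after twisting.

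Fourth, let $S$ denote the open locus in $\Hom((F_0^*)^{m_3},(F_{-1}^*)^{m_2}\oplus E_{-\beta}^{m_1}\oplus E_{-\alpha}^{m_0})$ of injective maps with torsion-free cokernel. By the argument of \cite[\S5.3]{CHW:CHW14}, $S$ is nonempty and the tautological family $\mathcal{U}/S$ is complete. Condition (6) of Definition~\ref{def:pos extremal pair} ensures that every member of $\mathcal{U}$ is prioritary, so by Walter's theorem \cite[Thm.~1]{Wa:Wa98} the family meets the dense open substack of semistable sheaves with Chern character $\xi$; hence $S$ parametrizes the general element of $M(\xi)$.

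The main obstacle is Step~3: the parity assumptions on $\Delta_1$ and on the sign of $\chi(E_{-\gamma},U)$ split the analysis into two subcases, and in each subcase one must carefully choose which mutation triangle (regular versus non-regular) to feed into the Ext long exact sequence so that the desired term reduces to a coil orthogonality. Everything else is a direct transcription of the Theorem~\ref{thm:resolution-neg pos} argument.
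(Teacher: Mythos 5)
Your overall plan is the one the paper intends: the paper omits the proof of this theorem, saying only that it is similar to the proof of Theorem \ref{thm:resolution-neg pos}, and your Steps 1, 2 and 4 (the Chern character count, injectivity of $\phi$ from condition (5) of Definition \ref{def:pos extremal pair} plus the Bertini-type result, and stability via prioritariness, completeness of the family, and Walter's theorem) are a faithful transcription of that argument to the coil $\mathcal{B}=(F_0^*,F_{-1}^*,E_{-\beta},E_{-\alpha})$ with right mutation $\mathcal{A}=(E_{-\alpha},E_{-\gamma},F_1^*(-K),F_0^*(-K))$.

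Step 3, however, would fail as written, because you have named the wrong groups in the two middle columns. With $E_1^{p,q}=B_p\otimes\Ext^{q-\Delta_p}(A_{-p},U)$, the entry $B_{-1}=E_{-\beta}$ must survive only in total degree $0$, so the surviving group is $\Ext^{1-\Delta_1}(E_{-\gamma},U)$ (it computes $m_1$) and the group that must be killed is $\ext^{\Delta_1}(E_{-\gamma},U)$; likewise, since $\Delta_2=1$ by condition (4), the surviving group for $B_{-2}=F_{-1}^*$ is $\Ext^{1}(F_1^*(-K),U)$ (it computes $m_2$) and the vanishing needed is $\hom(F_1^*(-K),U)=0$. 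Your list instead demands $\ext^{1-\Delta_1}(E_{-\gamma},U)=0$ and $\ext^{\Delta_2}(F_1^*(-K),U)=\ext^{1}(F_1^*(-K),U)=0$, which are exactly the groups that give the multiplicities $m_1$ and $m_2$; conditions (3) and (4) fix the signs of $\chi(E_{-\gamma},U)$ and $\chi(F_1^*(-K),U)$ so that these groups are in general nonzero, so those vanishings cannot be established, and imposing them would delete the summands $E_{-\beta}^{m_1}$ and $\left(F_{-1}^*\right)^{m_2}$ from the resolution while leaving unwanted terms in the wrong total degree. The correct list, parallel to the mixed-type case (where the required vanishings were $\hom(E_{-\alpha},U)=\ext^{\Delta_1}(E_{-1}^*,U)=\ext^{\Delta_2}(E_{-2}^*,U)=\ext^1(E_{-\beta},U)=0$), is
$$\ext^1(E_{-\alpha},U)=\ext^{\Delta_1}(E_{-\gamma},U)=\hom(F_1^*(-K),U)=\hom(F_0^*(-K),U)=0.$$
Once you replace your list with this one, the reductions you describe, via the regular or non-regular mutation sequences and orthogonality within the relevant coils, do go through exactly as in the proof of Theorem \ref{thm:resolution-neg pos}.
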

As the proof is similar to the proof of the second previous theorem, we omit its proof.


\section{The Kronecker Fibration}
\label{sec: The Kron. Fib.}
In this subsection, we use the resolutions constructed in the last section to construct a map to a moduli space of Kronecker modules.
In the next section, these maps produce effective divisors on $\MMM$.

One thing that we need to know in order for this to work is that all the homomorphisms in the derived category are morphisms of complexes.
\begin{lemma}\label{lem:complexes}
Consider a pair of two term complexes
$$W = E^*\otimes \CC^{m_3} \osum F_0^* \otimes \CC^{m_2} \to F_{-1}^* \otimes \CC^{m_1}$$
and
$$W' = E^*\otimes \CC^{m'_3} \osum F_0^* \otimes \CC^{m'_2} \to F_{-1}^* \otimes \CC^{m'_1}$$
each sitting in degrees 0 and -1.
Every homomorphism $W \to W'$ in the derived category $D^b(\QS)$ is realized by a homomorphism of the complexes, so
$$\Hom_{D^b(\QS)}(W,W') = \Hom_{Kom(\QS)}(W,W').$$ 

Similarly, consider a pair of two term complexes
$$W = F_0^* \otimes \CC^{m_2} \to F_{-1}^* \otimes \CC^{m_1} \osum E^*\otimes \CC^{m_0} $$
and
$$W' = F_0^* \otimes \CC^{m'_2} \to F_{-1}^* \otimes \CC^{m'_1} \osum E^*\otimes \CC^{m'_0} $$
each sitting in degrees 0 and -1.
Every homomorphism $W \to W'$ in the derived category $D^b(\QS)$ is realized by a homomorphism of the complexes, so
$$\Hom_{D^b(\QS)}(W,W') = \Hom_{Kom(\QS)}(W,W').$$ 

\end{lemma}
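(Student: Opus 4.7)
The plan is to compute $\Hom_{D^b(\QS)}(W,W')$ directly via the hypercohomology spectral sequence applied to the total Hom complex of sheaves. Since each term of $W$ is locally free, $\mathbf{R}\sheafhom(W,W')$ is represented by the naive complex $\sheafhom^\bullet(W,W')$ whose degree-$p$ piece is $\bigoplus_{j-i=p}\sheafhom(W^i, W'^j)$. Taking derived global sections then gives the standard spectral sequence
$$E_1^{p,q} = H^q\bigl(\QS,\, \sheafhom^p(W,W')\bigr) \Longrightarrow \Ext^{p+q}_{D^b(\QS)}(W, W').$$
Both complexes are supported only in degrees $-1$ and $0$, so at most the columns $p \in \{-1, 0, 1\}$ can contribute.

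The heart of the argument is that the entire column $p = -1$ vanishes on the $E_1$-page. In the first case, $\sheafhom^{-1}(W,W') = \sheafhom(W^0, W'^{-1})$ is a direct sum of copies of $\sheafhom(F_{-1}^*, E^*)$ and $\sheafhom(F_{-1}^*, F_0^*)$. Since $(E^*, F_0^*, F_{-1}^*)$ is an exceptional subcollection of the coil used in Section~\ref{subsec:spec seq neg and pos}, the exceptional-collection axioms force $\Ext^q(F_{-1}^*, E^*) = 0$ and $\Ext^q(F_{-1}^*, F_0^*) = 0$ for every $q$, so $E_1^{-1,q} = 0$ identically. The second case is handled in the same way using the exceptional subcollection $(F_0^*, F_{-1}^*, E^*)$, which kills $\Ext^q(F_{-1}^*, F_0^*)$ and $\Ext^q(E^*, F_0^*)$ for every $q$.

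I then read off $\Hom_{D^b(\QS)}(W,W') = \mathbb{H}^0$ by inspecting the antidiagonal $p+q = 0$. The entry $E_1^{-1,1}$ is zero by the previous paragraph, and $E_1^{1,-1}$ is zero because $\Ext^{-1}$ between coherent sheaves always vanishes; only $E_\infty^{0,0}$ survives. On the $E_1$-page, $E_1^{0,0} = \Hom(W^{-1},W'^{-1}) \oplus \Hom(W^0,W'^0)$, and the differential $d_1\colon E_1^{0,0} \to E_1^{1,0}$ sends $(f^{-1},f^0)$ to $d'\circ f^{-1} - f^0\circ d$, whose kernel is precisely $\Hom_{Kom(\QS)}(W,W')$. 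Because no higher differential can enter or leave $E_r^{0,0}$ within the finite range where terms are nonzero, the spectral sequence collapses to give $\Hom_{D^b(\QS)}(W,W') = E_\infty^{0,0} = \Hom_{Kom(\QS)}(W,W')$.

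The only potential obstacle is verifying that the exceptional subcollections $(E^*, F_0^*, F_{-1}^*)$ and $(F_0^*, F_{-1}^*, E^*)$ really do sit inside the coils constructed in Section~\ref{sec: Beilinson Spec seq}, so that every required $\Ext^q$-vanishing is available; this is immediate from the shape of the coils used to set up the spectral sequences of Theorems~\ref{thm:resolution-neg pos}--\ref{thm:resolution-pos pos}, and no extra work is needed beyond identifying the three consecutive terms. Everything else in the argument is formal.
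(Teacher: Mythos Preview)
Your argument is correct and is essentially the approach the paper has in mind: the paper simply cites Lemma~5.5 of \cite{CHW:CHW14}, and the crux there, as in your proof, is that the exceptional-collection axioms for the three consecutive bundles of the coil force every $\Ext^q(W^0,W'^{-1})$ to vanish, so no homotopies (equivalently, no contributions from the $p=-1$ column) can interfere. Your hypercohomology spectral sequence is just an explicit, self-contained packaging of that same vanishing.
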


\begin{proof}
The proof of each statement are nearly identical to that of Lemma 5.5 in \cite{CHW:CHW14} after switching $\QS$ in place of $\PP^2$ and noting that $\Hom\left(A^a\osum B^b,A^a \osum B^b\right) \isom \textnormal{GL}(a) \times \textnormal{GL}(b) \times M_{b,a}(\Hom(A,B))$ where $\{A,B\}$ is an exceptional pair and where $M_{b,a}(\Hom(A,B))$ is the group of $b$ by $a$ matrices with entries in $\Hom(A,B)$.
\end{proof}



Let $\{E_\alpha, E_\beta \}$ be an extremal controlling pair to $\xi$, $\{F_{-1},F_0\}$ be the left mutation of the minimally ranked right completion pair of $\{E_\alpha, E_\beta\}$, $\xi^+$ the primary orthogonal Chern character associated to that exceptional pair $\{E_\alpha, E_\beta \}$, and $U \in M(\xi)$ be a general element.
For simplicity, for the rest of the paper, we assume that $\chi(E_\beta^*,U)\geq 0$ and $\chi(E_\alpha^*,U)\leq 0$; proving the other cases is similar.
In Section \ref{sec: Beilinson Spec seq}, we saw that $U$ has a resolution of the form
$$0 \to E_\alpha^*(K)^{m_3} \osum F_0^{*m_2} \to F_{-1}^{*m_1} \osum E_\beta^{*m_0} \to U \to 0.$$
Using the resolution, we construct dominant rational maps from $M(\xi)$ to different Kronecker moduli spaces.

Which Kronecker moduli space we use, as well as the behavior of the map, depends on which, if any, of the $m_i$ are zero.
At most two of the $m_i$ are zero because $\xi$ is not the Chern class of $E^f$ for any exceptional $E$.
We break up the cases by the number of $m_i$ which are zero.

If no $m_i$ is zero, we construct a dominant rational map from $M(\xi)$ to $Kr_N(m_2,m_1)$ where $N = \hom(F_0^*,F_{-1}^*)$.

If exactly one $m_i$ is zero, then we could construct a dominant rational map to a certain Kronecker moduli space, but that space would always be a single point as one part of the dimension vector would be $0$. 
The constant map tells us nothing about our space, so we will not construct it here.

If $m_{i_0}$ and $m_{i_1}$ are zero, we construct a dominant rational map from $M(\xi)$ to $Kr_N(m_{j_1},m_{j_0})$ where $N$ is the dimension of the appropriate group of homomorphisms and $\{ i_0,i_1,j_0,j_1\}$ is some permutation of the set $\{0,1,2,3,\}$ and $j_1<j_0$.

\subsection{The Case When Two Powers Vanish}
\label{subsec:birational fib}
First note that the cases $m_3=m_2=0$ and $m_1=m_0=0$ cannot occur due to the form of the spectral sequence.
This leaves four cases where two exponents vanish to deal with. 
The proof of the proposition is identical in each case after you replace the two bundles with nonzero exponents so we will only explicitly prove the proposition in the first case.

\subsubsection{When the Second and Third Powers Vanish} 
For this subsubsection, assume that $m_2 = 0$ and $m_1 = 0 $ in the resolution
$$0 \to E_\alpha^*(K)^{m_3} \osum F_0^{*m_2} \fto{\phi} F_{-1}^{*m_1} \osum E_\beta^{*m_0} \to U \to 0.$$
We see that $U$ determines a complex of the form
$$W: E_\alpha^*(K)^{m_3} \to E_{\beta}^{*m_0}$$
which in turn determines a Kronecker $\Hom(E_\alpha^*(K),E_\beta^*)$-module.

Conversely, given a general such module and its determined complex, $W'$, there exists an element $U' \in M(\xi)$ such that $W'$ is the associated complex of $U'$ by Thm. \ref{thm:resolution-neg pos}. 
Assuming that the Kronecker module associated to a general $W$ is semistable, this constructs a rational map 
$$\pi : M(\xi) \dashrightarrow Kr_N(m_3,m_0)$$
where $N =\hom(E_{\alpha}^*(K),E_\beta^*)$.
In order to show that the Kronecker module associated to a general $W$ is semistable, it suffices by Subsection \ref{subsec:kronecker modules} to show that $Kr_N(m_3,m_0)$ is positive dimensional.

\begin{prop}
\label{prop:Kron birational fib}
With the above notation, $Kr_N(m_3,m_0)$ is positive dimensional, and the dominant rational map 
$$\pi : M(\xi) \dashrightarrow Kr_N(m_3,m_0)$$
is a birational map.
\end{prop}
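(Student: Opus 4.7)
The plan is to establish birationality in three steps: (i) a dimension count that simultaneously proves positive-dimensionality of $Kr_N(m_3,m_0)$, (ii) dominance of $\pi$, and (iii) generic injectivity of $\pi$.

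For (i), I would compute $\chi(\xi,\xi)$ directly from the resolution $0 \to E_\alpha^*(K)^{m_3} \to E_\beta^{*m_0} \to U \to 0$. Since $(E_{-\alpha}(K),E_{-\beta})$ forms an exceptional pair inside the coil $(E_{-\alpha}(K),F_0^*,F_{-1}^*,E_{-\beta})$, one has $\chi(E_\beta^*,E_\alpha^*(K)) = 0$, $\chi(E_\alpha^*(K),E_\beta^*) = N$, and each self-pairing equals $1$. Bilinearity of the Euler pairing then gives $\chi(\xi,\xi) = m_3^2 + m_0^2 - N m_0 m_3$, so
$$\dim M(\xi) = 1-\chi(\xi,\xi) = Nm_0 m_3 - m_3^2 - m_0^2 + 1,$$
which is precisely the expected dimension of $Kr_N(m_3,m_0)$. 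Since $M(\xi)$ is positive-dimensional by our standing assumptions, so is $Kr_N(m_3,m_0)$, and Reineke's result recalled in Section \ref{subsec:kronecker modules} ensures nonemptiness of this dimension.

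For (ii), I would show $\pi$ is well-defined and dominant on a suitable open subset. Two different resolutions of the same general $U$ differ by the natural $\GL(m_3)\times \GL(m_0)$-action on source and target and so produce the same point of $Kr_N(m_3,m_0)$. Semistability of the associated Kronecker module for the general $U$ holds because the semistable locus in the representation space $\Hom(E_\alpha^*(K),E_\beta^*)\otimes M_{m_0,m_3}$ is nonempty and open by (i), while by the Bertini-type argument in Step 2 of Theorem \ref{thm:resolution-neg pos} (using that $\sheafhom(E_\alpha^*(K),E_\beta^*)$ is globally generated, which is part of the extremal pair conditions) the locus of $W$ whose cokernel lies in $M(\xi)$ is also nonempty and open. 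These two open subsets intersect, and reversing the construction from a general stable Kronecker module yields an injective $W'$ whose cokernel lies in $M(\xi)$ by the same spectral sequence analysis, establishing dominance.

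For (iii), suppose two general $U,U' \in M(\xi)$ satisfy $\pi(U)=\pi(U')$. Then their associated complexes $W,W'$ are isomorphic as Kronecker modules. By the identification recalled in Section \ref{subsec:kronecker modules}—homomorphisms of Kronecker modules coincide with homomorphisms of the corresponding complexes of exceptional bundles—this isomorphism is realized by an isomorphism of the complexes $W$ and $W'$; passing to cokernels yields $U \isom U'$ in $M(\xi)$. Combined with dominance and the matching dimensions from (i), this gives that $\pi$ is birational. The main subtlety I anticipate is the bookkeeping in (ii), where one must simultaneously control the openness of the semistable locus in the Kronecker representation space and the openness of the cokernel-in-$M(\xi)$ locus, and verify that these openness statements are the same on both sides so that the two constructions genuinely invert each other; but both are established by arguments essentially parallel to those in the proof of Theorem \ref{thm:resolution-neg pos}.
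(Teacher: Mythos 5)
Your proposal is correct and follows essentially the same route as the paper: identify $\dim M(\xi)$ with $1-\chi(\xi,\xi)=\textnormal{edim}\, Kr_N(m_3,m_0)$ (the paper phrases this as $1-\chi(U,U)=1-\chi(e,e)$ via the complex--Kronecker-module identification, which is the same bilinearity computation), invoke positivity of this dimension together with the nonemptiness result for Kronecker moduli to get stability of the general module, and use the correspondence between isomorphisms of resolutions and isomorphisms of Kronecker modules, plus Theorem \ref{thm:resolution-neg pos} for both directions of the construction, to conclude birationality.
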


\begin{proof}
By construction, the primary orthogonal Chern characters to $\xi$ are all semistable so $\xi^+$ is semistable.
By assumption, the general $U\in M(\xi)$ has the resolution 
$$0 \to E_\alpha^*(K)^{m_3} \to E_\beta^{*m_0} \to U \to  0.$$
A general map in a complex of the form 
$$W: E_\alpha^*(K)^{m_3} \to E_\beta^{*m_0}$$
is injective and has a semistable cokernel with Chern character $\xi$ by Thm. \ref{thm:resolution-neg pos}.
We also know that any isomorphism of two general elements of $M(\xi)$ is induced by an isomorphism of their resolutions. 

Recall from earlier in the section that $W$ corresponds to a Kronecker $\hom(E_\alpha^*(K),E_\beta^*)$-module $e$ with dimension vector $(m_3,m_0)$.
Then we compute that 
$$\dim(M(\xi)) = 1 - \chi(U,U) = 1- \chi(e,e) = (\textnormal{edim}) Kr_N(m_3,m_0).$$
As $\dim(M(\xi))>0$, we have that $(\textnormal{edim}) Kr_N(m_3,m_0)>0$.
By the properties of Kronecker moduli spaces, $Kr_N(m_3,m_0)$ is positive dimensional.
Thus, the general such module is stable.
As isomorphism of complexes corresponds exactly with isomorphism of Kronecker modules, we obtain a birational map 
$$\pi: M(\xi) \dashrightarrow Kr_N(m_3,m_0).$$
\end{proof}


\subsection{When All of the Powers Are Nonzero}
\label{subsec:large fib}
For this subsection, assume that $m_i \neq 0$ for all $i$ in the resolution
$$0 \to E_\alpha^*(K)^{m_3} \osum F_0^{*m_2} \fto{\phi} F_{-1}^{*m_1} \osum E_\beta^{*m_0} \to U \to 0.$$
Forgetting most of the information of the resolution, $U$ determines a complex of the form
$$W: F_0^{*m_2} \to F_{-1}^{*m_1}$$
which in turn determines a Kronecker $\Hom(F_0^*,F_{-1}^*)$-module.

Conversely, given a general such module and its determined complex, $W'$, there exists an element $U' \in M(\xi)$ such that $W'$ is the associated complex of $U'$ by Thm. \ref{thm:resolution-neg pos}.
Assuming that there is a semistable Kronecker module associated to a general $W$, this constructs a rational map 
$$\pi : M(\xi) \dashrightarrow Kr_N(m_2,m_1)$$
where $N =\hom(F_0^*,F_{-1}^*)$.
In order to verify that we get Kronecker modules, we have to prove a result about the map $E_{\beta}^{*m_0} \to U$ and $U \to E_\alpha^{*m_3}(K)[1]$ being canonical. 
To show that the general associated Kronecker module is semistable, we will again have to show that the Kronecker moduli space is nonempty.
We now show that the map $E_{\beta}^{*m_0} \to U$ is canonical.

\begin{proposition}\label{triangleThm}
With the notation of this subsection, let $U\in M(\xi)$ be general.  Let $W' \in D^b(\QS)$ be the mapping cone of the canonical evaluation map $$E_{\beta}^*\otimes \Hom(E_{\beta}^*,U) \to U,$$ so that there is a distinguished triangle 
$$E_{\beta}^* \otimes \Hom(E_{\beta}^*,U) \to U \to W' \to \cdot.$$ 
Then $W'$ is isomorphic to a complex of the form 
$$\left(E_{\alpha}^*(K) \otimes \CC^{m_3}\right) \osum \left(F_0^* \otimes \CC^{m_2}\right) \to \left(F_{-1}^* \otimes \CC^{m_1}\right)$$ 
sitting in degrees $-1$ and $0$.  

Furthermore, $W'$ is also isomorphic to the complex $\left(E_{\alpha}^*(K) \otimes \CC^{m_3}\right) \osum \left(F_0^* \otimes \CC^{m_2}\right) \to \left(F_{-1}^* \otimes \CC^{m_1}\right)$ appearing in the Beilinson spectral sequence for $U$.
\end{proposition}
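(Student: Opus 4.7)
The plan is to apply the octahedral axiom to the composition $E_\beta^{*m_0} \hookrightarrow B \twoheadrightarrow U$, where
$$A := E_\alpha^*(K)^{m_3} \osum F_0^{*m_2}, \qquad B := F_{-1}^{*m_1} \osum E_\beta^{*m_0},$$
and $0 \to A \to B \to U \to 0$ is the Beilinson resolution produced by Thm. \ref{thm:resolution-neg pos}.

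I would begin by computing $\Hom(E_\beta^*, U)$ and identifying the canonical evaluation map with the composition above. Applying $\Hom(E_\beta^*, -)$ to the resolution, the fact that $(F_0^*, F_{-1}^*, E_\beta^*, E_\alpha^*)$ is a coil together with the cohomological hypotheses in Def. \ref{def:extremal pair} yields $\Hom(E_\beta^*, A) = \Ext^1(E_\beta^*, A) = 0$. Hence the natural map $\Hom(E_\beta^*, B) \to \Hom(E_\beta^*, U)$ is an isomorphism, both spaces have dimension $m_0$, and any lift of a basis of $\Hom(E_\beta^*, U)$ lands in the $E_\beta^{*m_0}$ summand of $B$ (since $\hom(E_\beta^*, F_{-1}^*) = 0$). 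After absorbing an automorphism of $E_\beta^{*m_0}$ into the choice of basis, the evaluation $E_\beta^{*m_0} \to U$ is precisely the direct-summand inclusion $E_\beta^{*m_0} \hookrightarrow B$ composed with the surjection $B \twoheadrightarrow U$.

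Next I would invoke the octahedral axiom on $E_\beta^{*m_0} \hookrightarrow B \twoheadrightarrow U$. The three cones are, respectively, $F_{-1}^{*m_1}$ (since $B$ splits as a direct sum and the inclusion hits only the second summand), $A[1]$ (from the Beilinson triangle), and $W'$ by definition. Octahedral therefore produces a distinguished triangle
$$F_{-1}^{*m_1} \to W' \to A[1] \to F_{-1}^{*m_1}[1],$$
which upon rotation exhibits $W'$ as the mapping cone of some morphism $A \to F_{-1}^{*m_1}$. Presenting this cone as a two-term complex with $A$ in degree $-1$ and $F_{-1}^{*m_1}$ in degree $0$ gives the form claimed in the proposition.

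For the second assertion, the complex appearing in the Beilinson spectral sequence for $U$ is obtained by excising the $E_\beta^{*m_0}$ column of the $E_1$-page — that is, by quotienting the right-hand term $B$ of the resolution by its canonical $E_\beta^{*m_0}$-summand — which is precisely the octahedral operation above. The remaining task is to check that the differential $A \to F_{-1}^{*m_1}$ produced by octahedral agrees (as a morphism of two-term complexes) with the spectral-sequence differential. Lemma \ref{lem:complexes} supplies this: both complexes sit in distinguished triangles with identical outer terms $F_{-1}^{*m_1}$ and $A[1]$, so the derived-category isomorphism between them is realized by a genuine morphism of two-term complexes, and by the shape of $\Hom$ between direct sums of exceptionals it must be an isomorphism in each slot. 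I expect the main obstacle to be exactly this last matching of differentials, which is what Lemma \ref{lem:complexes} together with the canonicity of the splitting of $B$ is designed to resolve.
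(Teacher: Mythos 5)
Your first claim is handled correctly, and in essence by the same route as the paper: your octahedral step is precisely the elementary fact the paper invokes (for an exact sequence $0\to A\to B\oplus C\to D\to 0$ the mapping cone of $C\to D$ is the two-term complex $A\to B$), and your identification of the canonical evaluation with the composite $E_\beta^{*m_0}\hookrightarrow B\to U$, via the vanishing of $\Hom$ and $\Ext^1$ from $E_\beta^*$ into $A$ and $F_{-1}^{*m_1}$ coming from the coil, is a legitimate (indeed slightly more detailed) substitute for the paper's appeal to $m_0=\hom(E_\beta^*,U)$ and semistability of $U$.

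The gap is in the ``Furthermore'' part. You assert that the complex appearing in the Beilinson spectral sequence ``is obtained by excising the $E_\beta^{*m_0}$ column \ldots which is precisely the octahedral operation above,'' but that is exactly what has to be proved. It amounts to two claims: (a) that the $E_1$-page data of the spectral sequence for this particular $U$ is related in the stated way to the chosen resolution from Thm.~\ref{thm:resolution-neg pos} --- but matching the resolution's subcomplex with the spectral-sequence complex is the content of Prop.~\ref{prop: Kron}, which is proved \emph{later} using the present proposition, so assuming it here is circular; and (b) that the map $E_\beta^*\otimes\Hom(E_\beta^*,U)\to U$ furnished by the spectral sequence is the \emph{canonical} evaluation. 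Point (b) is the real content, and the paper establishes it by splitting the resolution of the diagonal into two short exact sequences, applying the Fourier--Mukai transforms $\Phi$, and using that the map $E_\beta^*\boxtimes E_\beta\to\OO_\Delta$ is the trace map: the second triangle then exhibits $\Phi_M(U)[1]$ as the cone of the canonical evaluation, while the first identifies $\Phi_M(U)[1]$ with the spectral-sequence complex. Your fallback argument --- that both complexes ``sit in distinguished triangles with identical outer terms $F_{-1}^{*m_1}$ and $A[1]$,'' hence are isomorphic --- is not valid: two triangles with isomorphic outer terms need not have isomorphic third terms, since the cone depends on the connecting morphism, and Lemma~\ref{lem:complexes} only upgrades an \emph{already known} isomorphism in $D^b(\QS)$ to a morphism of complexes; it cannot produce the isomorphism. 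A minimal repair avoiding the Fourier--Mukai computation would be to show that the augmentation $E_\beta^{*m_0}\to U$ produced by the spectral sequence induces an isomorphism $\CC^{m_0}\to\Hom(E_\beta^*,U)$, whence it agrees with the canonical evaluation up to an automorphism of $E_\beta^{*m_0}$ and the two cones coincide; but some identification of that augmentation, such as the paper's trace-map argument, is unavoidable.
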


\begin{proof}
It is easy to show that if $$0\to A\to B\oplus C \to D\to 0$$ is an exact sequence of sheaves, then the mapping cone of $C\to D$ is isomorphic to the complex $A\to B$ sitting in degrees $-1$ and $0$. 

Suppose we have a resolution $$0\to E_{\alpha}^*(K)^{m_3}  \osum F_0^{*m_2}\fto{\phi} F_{-1}^{*m_1} \osum E_{\beta}^{*m_0} \to U\to 0$$ of a general $U$ as in Thm. \ref{thm:resolution-neg pos}.  
We have $m_0 = \hom(E_{\beta}^*,U)$. 
Then since $U$ is semistable, the map $E_{\beta}^{*m_0} \to U$ can be identified with the canonical evaluation $E_{\beta}^* \otimes \Hom(E_{\beta}^*,U)\to U$.  
Thus, the mapping cone of this evaluation is the complex given by the first component of $\phi$.  
By Lemma \ref{lem:complexes}, any two complexes of the form $E_{\alpha}^*(K)^{m_3} \osum F_0^{*m_2} \to F_{-1}^{m_1}$ which are isomorphic in the derived category are in the same orbit of the $\textnormal{GL}(m_3)\times \textnormal{GL}(m_2)\times M_{m_2,m_3}(\Hom(E_{\alpha}^*(K),F_0^*)) \times \textnormal{GL}(m_1)$ action.

Finally, we show that $W'$ is isomorphic to the complex which appears in the Beilinson spectral sequence for $U$.  
For simplicity, we assume we are in the case $\Delta_1=\Delta_2 = 1$.
We recall how to compute the $E_1^{p,q}$-page of the spectral sequence.  Let $p_i:\left(\QS\right)\times\left( \QS\right)\to \QS$ be the projections, and let $\Delta\subset \left( \QS \right) \times \left(\QS\right)$ be the diagonal.  
There is a resolution of the diagonal 
$$0\to E_{\alpha}^*(K) \boxtimes  E_\alpha \osum F_0^* \boxtimes E_0 \to F_{-1}^* \boxtimes E_1 \to E_{\beta}^* \boxtimes E_{\beta} \to \OO_\Delta \to 0.$$  

We split the resolution of the diagonal into two short exact sequences 
$$0\to E_{\alpha}^*(K) \boxtimes  E_\alpha \osum F_0^* \boxtimes E_0 \to F_{-1}^* \boxtimes E_1 \to M\to 0 \textnormal{ and}$$
$$0\to M\to E_{\beta}^* \boxtimes E_{\beta} \to \OO_{\Delta}\to 0.$$ 
Tensoring with $p_2^*(U)$ and applying $R{p_{1*}}$, we get triangles 
$$\Phi_{E_{\alpha}^*(K) \boxtimes E_\alpha}(U)\osum \Phi_{F_0^* \boxtimes E_0}(U) \to \Phi_{F_{-1}^* \boxtimes E_1}(U)\to \Phi_M(U)\to\cdot \textnormal{ and}$$
$$\Phi_M(U)\to \Phi_{E_{\beta}^*\boxtimes E_{\beta}}(U)\to \Phi_{\OO_{\Delta}}(U)\to \cdot,$$ 
where $\Phi_F:D^b(\QS)\to D^b(\QS)$ is the Fourier-Mukai transform with kernel $F$.  
Computing these transforms using Proposition \ref{thm:resolution-neg pos}, we obtain two different complexes 
$$ E_{\alpha}^*(K)\otimes H^1(E_\alpha \otimes U) \osum F_0^* \otimes H^0(E_0 \otimes U) \to F_{-1}^* \otimes H^0(E_1 \otimes U) \to \Phi_M(U)[1]\to \cdot \textnormal{ and}$$ 
$$\Phi_M(U) \to E_{\beta}^* \otimes \Hom(E_{\beta}^*,U) \to  U\to \cdot$$
involving $\Phi_M(U)$; notice that the map $E_{\beta}^*\otimes \Hom(E_{\beta}^*,U)\to U$ is the canonical one since the map $$E_{\beta}^* \boxtimes E_{\beta} \to \OO_\Delta$$ is the trace map.
Therefore $\Phi_M(U)[1]$ is isomorphic to $W$ by the second triangle.  
On the other hand, $\Phi_M(U)[1]$ is also isomorphic to the complex in the Beilinson spectral sequence by the first triangle.
\end{proof}

We now turn to showing that the map $U \to E_{\alpha}^*(K)^{m_0}[1]$ is canonical in order to show that the general $W$ is associated to a Kronecker module.
%
%
%
%
%

\begin{proposition}\label{triangleThm otherway}
With the notation of this subsection, let $U\in M(\xi)$ be general.  Let $W' \in D^b(\QS)$ be the mapping cone of the canonical evaluation map $$U \to E_{\alpha}^*(K)[1]\otimes \Hom(U,E_{\alpha}^*(K)[1]),$$ so that there is a distinguished triangle 
$$U \to E_{\alpha}^*(K)[1] \otimes \Hom(U,E_{\alpha}^*(K)[1]) \to W' \to \cdot.$$ 
Then $W'$ is isomorphic to a complex of the form 
$$ \left(F_0^* \otimes \CC^{m_2}\right) \to \left(F_{-1}^* \otimes \CC^{m_1}\right) \osum \left(E_{\beta}^* \otimes \CC^{m_0}\right)$$ 
sitting in degrees $-2$ and $-1$.  

Furthermore, $W'$ is also isomorphic to the complex $\left(F_0^* \otimes \CC^{m_2}\right) \to \left(F_{-1}^* \otimes \CC^{m_1}\right) \osum \left(E_{\beta}^* \otimes \CC^{m_0}\right) $ appearing in the Beilinson spectral sequence for $U$.
\end{proposition}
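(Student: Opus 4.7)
The plan is to parallel the proof of Proposition \ref{triangleThm}, with $E_\alpha^*(K)$ playing the role there assigned to $E_\beta^*$. Write the resolution of Theorem \ref{thm:resolution-neg pos} as $0\to A\to B\to U\to 0$ with $A=E_\alpha^*(K)^{m_3}\osum F_0^{*m_2}$ and $B=F_{-1}^{*m_1}\osum E_\beta^{*m_0}$, producing a distinguished triangle $A\to B\to U\to A[1]$. First I would show that the canonical map $U\to E_\alpha^*(K)[1]^{m_3}$ in the statement coincides with the composition of the boundary $U\to A[1]$ and the projection onto the $E_\alpha^*(K)[1]^{m_3}$ summand. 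Applying $\Hom(-,E_\alpha^*(K))$ to the resolution and using $\Ext^k(X,E_\alpha^*(K))=0$ for every $k$ and every $X\in\{F_0^*,F_{-1}^*,E_\beta^*\}$ (automatic because $(E_\alpha^*(K),F_0^*,F_{-1}^*,E_\beta^*)$ is an exceptional collection with $E_\alpha^*(K)$ on the left), the long exact sequence collapses to $\Hom(U,E_\alpha^*(K))=0$ together with a connecting isomorphism $\CC^{m_3}=\Hom(A,E_\alpha^*(K))\xrightarrow{\sim}\Hom(U,E_\alpha^*(K)[1])$ given by precomposition with $U\to A[1]$.

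Next I would apply the octahedral axiom to the factorization $U\to A[1]\to E_\alpha^*(K)[1]^{m_3}$. The first arrow has cone $B[1]$ (the rotation of the original triangle), and the projection has cone $F_0^*[2]^{m_2}$, so $W'$ fits into a distinguished triangle $B[1]\to W'\to F_0^*[2]^{m_2}\to B[2]$. This already exhibits $W'$ in $D^b(\QS)$ as a two-term complex with $F_0^{*m_2}$ in degree $-2$ and $F_{-1}^{*m_1}\osum E_\beta^{*m_0}$ in degree $-1$, establishing the first assertion.

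To identify this cone with the complex appearing in the Beilinson spectral sequence for $U$, I would mimic the Fourier--Mukai calculation in the proof of Proposition \ref{triangleThm}, but split the Beilinson resolution $X^\bullet\simeq\OO_\Delta$ the opposite way. Let $Y^\bullet\subset X^\bullet$ be the subcomplex $[F_0^*\boxtimes E_0\to F_{-1}^*\boxtimes E_1\to E_\beta^*\boxtimes E_\beta]$, where the first differential is the $F_0^*$-component of the degree $-2$ differential of $X^\bullet$; the quotient is $E_\alpha^*(K)\boxtimes E_\alpha[2]$. Tensoring with $p_2^*U$ and pushing forward along $p_1$ produces a triangle $U\to E_\alpha^*(K)[1]^{m_3}\to\Phi_{Y^\bullet}(U)[1]\to U[1]$, where the middle term simplifies because the extremal pair hypotheses force $R\Gamma(E_\alpha\otimes U)$ to be concentrated in degree $1$ with dimension $m_3$. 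Decomposing $Y^\bullet$ term-by-term exactly as in the first proposition identifies $\Phi_{Y^\bullet}(U)[1]$ with the Beilinson complex $[F_0^{*m_2}\to F_{-1}^{*m_1}\osum E_\beta^{*m_0}]$. Since both $W'$ (from the octahedral axiom) and the Beilinson complex are isomorphic to $\Phi_{Y^\bullet}(U)[1]$, they are isomorphic in $D^b(\QS)$; Lemma \ref{lem:complexes} then promotes this to an isomorphism of actual complexes.

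The main obstacle is the third step: unlike Proposition \ref{triangleThm}, where $E_\beta^*\boxtimes E_\beta$ is literally the rightmost quotient of $X^\bullet$, here $E_\alpha^*(K)\boxtimes E_\alpha$ sits inside a direct sum in degree $-2$. Verifying that $Y^\bullet$ is a genuine subcomplex (it is, because the two components of $d_{-2}$ decouple as independent maps into $F_{-1}^*\boxtimes E_1$) and that the boundary map of the resulting Fourier--Mukai triangle agrees with the canonical evaluation up to the $\textnormal{GL}(m_3)$-ambiguity inherent in the latter's definition is the essential new technical content beyond the original proof.
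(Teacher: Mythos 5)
Your argument is correct and follows essentially the same route as the paper's proof: identify the canonical co-evaluation $U \to E_\alpha^*(K)^{m_3}[1]$ with the boundary map coming from the resolution of Thm.~\ref{thm:resolution-neg pos} (so that its cone is the two-term complex given by the remaining component of $\phi$), then split the resolution of the diagonal so as to isolate $E_\alpha^*(K)\boxtimes E_\alpha$, apply the Fourier--Mukai transform, use the cotrace map to see the boundary map is canonical, and invoke Lemma~\ref{lem:complexes} to upgrade the derived-category isomorphism. The differences are cosmetic: you run the mapping-cone identification through the octahedral axiom and a connecting-homomorphism argument where the paper states it as an elementary observation together with semistability, and you work in the $\Delta_1=\Delta_2=1$ representative case while the paper's proof of this proposition treats $\Delta_1=\Delta_2=0$.
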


\begin{proof}
It is easy to show that if $$0\to A\oplus B \to C \to D\to 0$$ is an exact sequence of sheaves, then the mapping cone of $D\to A[1]$ is isomorphic to the complex $B\to C$ sitting in degrees $-2$ and $-1$.  
Suppose we have a resolution $$0\to E_{\alpha}^*(K)^{m_3}  \osum F_0^{*m_2}\fto{\phi} F_{-1}^{*m_1} \osum E_{\beta}^{*m_0} \to U\to 0$$ of a general $U$ as in Thm. \ref{thm:resolution-neg pos}.  
We have $m_3 = \hom(U,E_{\alpha}^*(K)[1])$.
Then since $U$ is semistable, the map $U \to E_{\alpha}^*(K)^{m_3}[1]$ can be identified with the canonical co-evaluation $U \to E_{\alpha}^*(K)[1] \otimes \Hom(U,E_{\alpha}^*(K)[1])$.  
Thus, the mapping cone of this co-evaluation is the complex given by the second component of $\phi$.  
By Lemma \ref{lem:complexes}, any two complexes of the form $F_0^{*m_2} \to F_{-1}^{m_1} \osum E_{\beta}^{*m_0}$ which are isomorphic in the derived category are in the same orbit of the $\textnormal{GL}(m_2)\times \textnormal{GL}(m_1)\times \textnormal{GL}(m_0) \times M_{m_0,m_1}(\Hom(F_{-1}^*,E_{\beta}^*))$ action.

Finally, we show that $W'$ is isomorphic to the complex which appears in the Beilinson spectral sequence for $U$.  
For simplicity, we assume we are in the case $\Delta_1=\Delta_2 = 0$.
Recall how to compute the $E_1^{p,q}$-page of the spectral sequence.  Let $p_i:\left(\QS\right)\times\left( \QS\right)\to \QS$ be the projections, and let $\Delta\subset \left( \QS \right) \times \left(\QS\right)$ be the diagonal.  
There is a resolution of the diagonal 
$$0\to E_{\alpha}^*(K) \boxtimes  E_\alpha \to F_0^* \boxtimes E_0 \to F_{-1}^* \boxtimes E_1 \osum E_{\beta}^* \boxtimes E_{\beta} \to \OO_\Delta \to 0.$$  

We split the resolution of the diagonal into two short exact sequences 
$$0 \to F_0^* \boxtimes E_0 \to F_{-1}^* \boxtimes E_1 \osum E_{\beta}^* \boxtimes E_{\beta} \to M\to 0 \textnormal{ and}$$
$$0\to M \to \OO_{\Delta}\to  \left( E_{\alpha}^*(K) \boxtimes  E_\alpha \right)[1].$$
Tensoring with $p_2^*(U)$ and applying $R{p_{1*}}$, we get triangles 
$$\Phi_{F_0^* \boxtimes E_0}(U) \to \Phi_{F_{-1}^* \boxtimes E_1}(U) \osum \Phi_{E_{\beta}^*\boxtimes E_{\beta}}(U) \to \Phi_M(U)\to\cdot \textnormal{ and}$$
$$\Phi_M(U)\to \Phi_{\OO_{\Delta}}(U)\to \Phi_{(E_{\alpha}^*(K) \boxtimes E_\alpha)[1]}(U) \to \cdot,$$ 
where $\Phi_F:D^b(\QS)\to D^b(\QS)$ is the Fourier-Mukai transform with kernel $F$.  
Computing these transforms using Thm. \ref{thm:resolution-neg pos}, we obtain two different complexes 
$$ F_0^* \otimes H^1(E_0 \otimes U) \to F_{-1}^* \otimes H^1(E_1 \otimes U) \osum E_{\beta}^* \otimes \Hom(E_{\beta}^*,U) \to \Phi_M(U)\to \cdot \textnormal{ and}$$ 
$$\Phi_M(U) \to  U\to  E_{\alpha}^*(K)\otimes \Hom(U,E_\alpha^*(K)[1]) \to \cdot$$
involving $\Phi_M(U)$; notice that the map $U \to E_{\alpha}^*(K)[1]\otimes \Hom(U,E_{\alpha}^*(K)[1])$ is the canonical one since the map $$\OO_\Delta \to (E_{\alpha}^*(K) \boxtimes E_{\alpha})[1]$$ is the cotrace map.
Therefore, $\Phi_M(U)$ is isomorphic to $W'$ by the second triangle.  
On the other hand, $\Phi_M(U)$ is also isomorphic to the complex in the Beilinson spectral sequence by the first triangle.
\end{proof}

We now use that these maps are canonical to establish that each resolution is associated to a Kronecker module.
\begin{prop}\label{prop: Kron}
Let $U \in M(\xi)$ be a general object with the resolution 
$$0 \to E_{\alpha}^*(K)^{m_3} \osum F_{0}^{*m_2} \to F_{-1}^{*m_1} \osum E_{\beta}^{*m_0} \to U \to 0$$
with subcomplex
$$W: F_{0}^{*m_2} \to F_{-1}^{*m_1}.$$
Then $W$ is the complex appearing in the spectral sequence.
\end{prop}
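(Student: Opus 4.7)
The plan is to realize $W$ simultaneously as a canonical sub-complex of the complex $A$ from Proposition~\ref{triangleThm} and as a canonical quotient of the complex $B$ from Proposition~\ref{triangleThm otherway}, and then transfer these realizations across to the corresponding sub/quotient structures on the spectral-sequence side. Concretely, I would first check directly that the two-term complex $W=[F_0^{*m_2}\to F_{-1}^{*m_1}]$ sits as a sub-complex of $A=[E_\alpha^*(K)^{m_3}\osum F_0^{*m_2}\to F_{-1}^{*m_1}]$ via the inclusion of $F_0^{*m_2}$ as the second summand in degree $-1$ and the identity on $F_{-1}^{*m_1}$ in degree $0$. The compatibility with the differential is immediate because the $\phi_{12}$-component of the resolution map is by definition the restriction of the differential of $A$ to $F_0^{*m_2}$. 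The quotient is $[E_\alpha^*(K)^{m_3}\to 0]\isom E_\alpha^*(K)^{m_3}[1]$, yielding a distinguished triangle
$$W\to A\to E_\alpha^*(K)^{m_3}[1]\to W[1]$$
in $D^b(\QS)$. An analogous direct check exhibits $W$ as the quotient of $B=[F_0^{*m_2}\to F_{-1}^{*m_1}\osum E_\beta^{*m_0}]$ by the sub-complex $[0\to E_\beta^{*m_0}]\isom E_\beta^{*m_0}$, producing a second distinguished triangle
$$E_\beta^{*m_0}\to B\to W\to E_\beta^{*m_0}[1].$$

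Next, Proposition~\ref{triangleThm} identifies $A$ in $D^b(\QS)$ with the spectral-sequence complex $A_{\mathrm{ss}}$, and Proposition~\ref{triangleThm otherway} identifies $B$ with the corresponding spectral-sequence complex $B_{\mathrm{ss}}$. On the spectral-sequence side, the complex $W_{\mathrm{ss}}$ of the form $F_0^{*m_2}\to F_{-1}^{*m_1}$ sits canonically as the fiber of the projection $A_{\mathrm{ss}}\to E_\alpha^*(K)^{m_3}[1]$ (from the row of the $E_1$-page) and as the cofiber of the inclusion $E_\beta^{*m_0}\to B_{\mathrm{ss}}$. Transporting our two triangles through the isomorphisms $A\isom A_{\mathrm{ss}}$ and $B\isom B_{\mathrm{ss}}$, uniqueness of the fiber in a distinguished triangle forces $W\isom W_{\mathrm{ss}}$ in $D^b(\QS)$.

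Finally, I would invoke Lemma~\ref{lem:complexes}: any derived-category isomorphism between two two-term complexes of the form $F_0^{*a}\to F_{-1}^{*b}$ is represented by a chain-map isomorphism, and therefore the two differentials lie in the same $\gl(m_2)\times\gl(m_1)$-orbit inside $M_{m_1,m_2}(\Hom(F_0^*,F_{-1}^*))$. This is precisely the condition for the associated Kronecker modules to coincide, which is the content of the proposition.

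The main obstacle I anticipate is the compatibility check hidden in the second paragraph: the isomorphisms $A\isom A_{\mathrm{ss}}$ and $B\isom B_{\mathrm{ss}}$ produced by the previous two propositions are a priori only derived-category isomorphisms, and one must verify that they intertwine the natural projection to $E_\alpha^*(K)^{m_3}[1]$ (respectively, the inclusion from $E_\beta^{*m_0}$) on the two sides before one can conclude that the induced isomorphism of fibers is genuine. To handle this I would apply the octahedral axiom to the composition $E_\beta^{*m_0}\to U\to E_\alpha^*(K)^{m_3}[1]$---whose composite vanishes by the exceptional-pair identity $\Hom(E_\beta^*,E_\alpha^*(K)[1])=0$---so that the canonicity of the evaluation and coevaluation maps underlying Propositions~\ref{triangleThm} and \ref{triangleThm otherway} forces all the relevant projections and inclusions to appear in a single commutative octahedron, giving the required compatibility uniformly.
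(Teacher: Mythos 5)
Your overall skeleton is viable and uses the same basic inputs as the paper (Propositions \ref{triangleThm} and \ref{triangleThm otherway} plus Lemma \ref{lem:complexes}), but the step you yourself flag is exactly where the argument as written breaks down, and the proposed patch does not close it. To invoke uniqueness of the fiber you need a morphism of triangles, i.e.\ you need the derived-category isomorphism $A \isom A_{\mathrm{ss}}$ coming from Proposition \ref{triangleThm} to intertwine your chain-level projection $A \to E_{\alpha}^*(K)^{m_3}[1]$ with the corresponding projection out of the $E_1$-page complex. The octahedral axiom cannot supply this: the octahedron built on $E_{\beta}^{*m_0} \to U \to E_{\alpha}^*(K)^{m_3}[1]$ only organizes the canonical maps among these three objects and their cones, whereas the identifications of those cones with the two explicit two-term complexes (one from the resolution of Theorem \ref{thm:resolution-neg pos}, one from the spectral sequence) are non-canonical isomorphisms in $D^b(\QS)$, and no axiom of the triangulated structure forces such isomorphisms to respect the direct-sum decompositions of the degree $-1$ terms. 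So ``forces all the relevant projections and inclusions to appear in a single commutative octahedron'' is an assertion rather than an argument; as written this is a genuine gap (and there is also a harmless shift error: the sub/quotient of $B$, which sits in degrees $-2,-1$, are $E_{\beta}^{*m_0}[1]$ and $W[1]$, not $E_{\beta}^{*m_0}$ and $W$).

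The gap is reparable, and in a way that makes the fiber/cofiber detour unnecessary: apply Lemma \ref{lem:complexes} one step earlier, to $A \isom A_{\mathrm{ss}}$ itself rather than to $W \isom W_{\mathrm{ss}}$. Any isomorphism in $D^b(\QS)$ between complexes of the form $E_{\alpha}^*(K)^{m_3}\osum F_0^{*m_2} \to F_{-1}^{*m_1}$ is realized by an isomorphism of complexes, and since $\Hom(F_0^*,E_{\alpha}^*(K))=0$ (coil ordering), its degree $-1$ component is block triangular, hence preserves the summand $F_0^{*m_2}$ and induces an element of $\textnormal{GL}(m_2)\times\textnormal{GL}(m_1)$ carrying the differential of $W$ to that of $W_{\mathrm{ss}}$; this is already the desired identification of Kronecker modules, with no appeal to cone uniqueness or octahedra. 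For comparison, the paper's own proof takes a different route: it first uses a Bertini-type statement to know that the spectral-sequence map $F_0^{*m_2}\to F_{-1}^{*m_1}$ is injective or surjective as a sheaf map, and in each case uses Proposition \ref{triangleThm otherway} (respectively \ref{triangleThm}) to view both $W$ and $W_{\mathrm{ss}}$ as pieces of two sheaf resolutions of the same canonical cone $L$; lifting the identity of $L$ to an isomorphism of resolutions, which again is block triangular, identifies the two modules. Your derived-category route, once patched as above, avoids that dichotomy but otherwise rests on the same ingredients.
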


\begin{proof}
Let $W': F_{0}^{*m_2} \to F_{-1}^{*m_1}$ be the complex appearing in $U$' spectral sequence.
By a Bertini-like statement \cite{Hu:Hu16}, $W'$ is either surjective or injective.

Assume that it is injective.
This means that the $E_2$ page of the spectral sequence is
$$
\begin{array}{cccc}
E_{\alpha}^*(K)^{m_3} & 0 & 0 & 0 \\
0 & 0 & K & 0 \\
0 & 0 & 0 & E_{\beta}^{*m_0} \\
\end{array}
$$
where $K = \textnormal{coker}(W')$.
In turn this gives that, in the resolution coming from the spectral sequence, the map from $F_{0}^{*m_2}$ to $E_{\beta}^{*m_0}$ is zero.
Then, by Prop. \ref{triangleThm otherway}, we have two short exact sequences
$$0 \to F_{0}^{*m_2} \fto{\phi} F_{-1}^{*m_1} \osum E_{\beta}^{*m_0} \to L \to 0\textnormal{ and}$$
$$0 \to F_{0}^{*m_2} \fto{\psi} F_{-1}^{*m_1} \osum E_{\beta}^{*m_0} \to L \to 0$$
where $W$ and $W'$ are subcomplexes of the respective sequences and the second component of $\psi$ is zero.
The identity map on $L$ induces an isomorphism of its resolutions, which implies that the the first component of $\phi$ is a scalar multiple of the first component of $\psi$.
This then gives an isomorphism between $W$ and $W'$ given by dividing by that scalar multiple in the degree $-1$ and the identity in degree $0$.
Thus, $W$ is the complex in the spectral sequence converging to $U$.

The case of surjectivity is similar but uses Prop. \ref{triangleThm}.
\end{proof}

We now finish the construction of the map $M(\xi) \dashrightarrow Kr_N(m_2,m_1)$.
\begin{prop}
\label{prop:Kron large fib}
With the above notation, $Kr_N(m_2,m_1)$ is nonempty and there is a dominant rational map 
$$\pi : M(\xi) \dashrightarrow Kr_N(m_2,m_1).$$
\end{prop}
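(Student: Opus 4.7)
The plan is to establish the proposition in three coordinated steps, following the strategy of Prop \ref{prop:Kron birational fib} but dropping the birationality available there: define $\pi$ as a rational map on $M(\xi)$, establish dominance by a reverse construction, and derive nonemptiness of $Kr_N(m_2,m_1)$ from the image of $\pi$.

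First, I would set up the map. For a general $U \in M(\xi)$, Thm \ref{thm:resolution-neg pos} supplies a resolution whose $(F_0,F_{-1})$-component $W: F_0^{*m_2} \to F_{-1}^{*m_1}$ is, by Prop \ref{prop: Kron}, isomorphic in $D^b(\QS)$ to the subcomplex appearing in the Beilinson spectral sequence computing $U$. Because the spectral sequence is canonically attached to $U$ and Lemma \ref{lem:complexes} identifies derived-category morphisms between such two-term complexes with honest morphisms of complexes, the isomorphism class of the Kronecker $\Hom(F_0^*,F_{-1}^*)$-module determined by $W$ depends only on $[U] \in M(\xi)$. This defines $\pi$ at the level of the stack of Kronecker modules with dimension vector $(m_2,m_1)$.

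Second, I would run the construction in reverse to obtain dominance. Starting with a general point realized as a complex $W_0 : F_0^{*m_2} \to F_{-1}^{*m_1}$, I would extend it by choosing generic sections of
$$\sheafhom(E_\alpha^*(K),F_{-1}^*), \quad \sheafhom(E_\alpha^*(K),E_\beta^*), \quad \sheafhom(F_0^*,E_\beta^*),$$
each globally generated by Def \ref{def:extremal pair} (5), to a generic lift $\phi : E_\alpha^*(K)^{m_3} \osum F_0^{*m_2} \to F_{-1}^{*m_1} \osum E_\beta^{*m_0}$ whose middle component is $W_0$. The Bertini-type argument from the proof of Thm \ref{thm:resolution-neg pos} makes $\phi$ injective with torsion-free prioritary cokernel $U$; by irreducibility of the stack of prioritary sheaves and Thm 1 of \cite{Wa:Wa98}, a Zariski-dense set of such extensions yields $U$ semistable of Chern character $\xi$, giving a point $U \in M(\xi)$ mapping to $[W_0]$. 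To close the loop and conclude that $Kr_N(m_2,m_1)$ is nonempty, I would verify that the $W$ produced from a general $U$ in the first step is itself semistable as a Kronecker module; otherwise its Harder-Narasimhan filtration would lift to a sub-complex of the full resolution which, combined with the remaining components of $\phi$, would destabilize the stable sheaf $U$, a contradiction.

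The main obstacle will be the fiber-wise semistability argument in the second step: I must verify that restricting $\phi$ to have its $(F_0,F_{-1})$-component equal to a fixed generic $W_0$ still lands in the open locus of $\phi$'s with semistable cokernel, even though this restriction is a positive-codimension slice of the full $\Hom$-space. I would handle this by building a universal family over the slice and invoking the irreducibility of the prioritary stack to ensure the slice meets the semistable locus in a dense open — this is the transversality analog of the surjectivity/injectivity dichotomy for the Kronecker factor used implicitly in the spectral-sequence construction of Props \ref{triangleThm} and \ref{triangleThm otherway}.
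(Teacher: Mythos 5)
Your construction of $\pi$ and your dominance argument are essentially the paper's (and the transversality worry you flag is not actually an issue: you do not need the general member of the slice over a fixed $W_0$ to have semistable cokernel, only one member, and that follows because the open locus of maps $\phi$ with semistable torsion-free cokernel, which is dense by Thm.~\ref{thm:resolution-neg pos}, surjects onto a dense constructible subset of the $\Hom(F_0^{*m_2},F_{-1}^{*m_1})$-block under the linear projection). The genuine gap is in your treatment of the real content of the proposition, namely nonemptiness of $Kr_N(m_2,m_1)$, equivalently semistability of the Kronecker module $f$ attached to a general $U$. Your claim that a Harder--Narasimhan destabilizing submodule of $f$ would ``lift to a sub-complex of the full resolution'' and destabilize $U$ does not hold up: a submodule of $f$ is a pair of subspaces of $\CC^{m_2}$ and $\CC^{m_1}$ compatible only with the block $F_0^{*m_2}\to F_{-1}^{*m_1}$ of $\phi$, and there is no reason these subspaces are preserved by the remaining components $E_\alpha^*(K)^{m_3}\to F_{-1}^{*m_1}$, $F_0^{*m_2}\to E_\beta^{*m_0}$, $E_\alpha^*(K)^{m_3}\to E_\beta^{*m_0}$, so no subcomplex of the full resolution is produced; moreover, even granting such a subcomplex, Kronecker slope stability (a ratio of dimension-vector entries) and Gieseker stability of $U$ are governed by unrelated numerical data, so no slope violation for $U$ would follow. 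If this implication were true, the paper's entire detour through the orthogonal character would be superfluous.

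What the paper actually does for nonemptiness is an orthogonality argument through $\xi^+$: a general $V\in M(\xi^+)$ has, by Thm.~\ref{thm:resolution-neg pos}, a two-term resolution by the completion pair, hence determines a Kronecker module $e$ with dimension vector $(n_1,n_2)$ whose moduli space $Kr_N(n_1,n_2)$ is nonempty by Prop.~\ref{prop:Kron birational fib}; the vanishing $\chi(V^*,U)=0$ (since $(\mu^+,\Delta^+)$ lies on $Q_\xi$) together with the orthogonality properties of the coil forces $\chi(V^*,W)=0$ and then $\chi(L_{E_\beta^*}V^*[1],W)=0$, i.e.\ $\chi(e,f)=0$, so $\underline{\dim}\,f$ is a right-orthogonal dimension vector to $\underline{\dim}\,e$; nonemptiness of $Kr_N(m_2,m_1)$ then follows from the standard facts about Kronecker moduli for orthogonal dimension vectors recalled at the end of Subsection 6.1 of \cite{CHW:CHW14}. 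To repair your proof you would need to replace the HN-lifting step with this (or an equivalent) orthogonality argument; as written, the semistability of $f$, and hence both the well-definedness of $\pi$ into the moduli space and the nonemptiness claim, is unproven.
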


\begin{proof}
By construction, we know that the primary orthogonal Chern characters to $\xi$ are all semistable so $\xi^+$ is semistable.
Let $V \in M(\xi^+)$ be general.
Then, by Thm. \ref{thm:resolution-neg pos},
$V$ has the resolution 
$$0 \to E_1^{n_1} \to V \to E_0^{n_2} \to 0$$
or one of the equivalent resolutions which have the same Kronecker module structure.
Similarly, the general $U\in M(\xi)$ has the resolution
$$E_\beta^{*m_0} \to U \to \left(E_\alpha^*(K)^{m_3}\osum W\right)[1] $$
where $W$ is the complex
$$F_0^{*m_2} \to F_{-1}^{*m_1}.$$

As the point $(\mu^+,\Delta^+)$ lies on the surface $Q_\xi$, we have that $\chi(V^*,U)= 0$.
By design, the resolution of $V$ immediately forces $\chi(V^*, E_\beta^*) =0$ and $\chi(V^*, E_\alpha^*(K)) =0$ because of the orthogonality properties of the coil $\{E_\alpha^*(K),E_\beta^*,E_1^*,E_0^*\}$.
Then vanishings of $\chi(V^*,U)$, $\chi(V^*, E_\beta^*)$, and $\chi(V^*, E_\alpha^*(K))$ force $\chi(V^*,W)=0$.
Since $\chi(V^*,W)=0$ and $\chi(E_\beta^*,W) =0$, we have that $\chi(L_{E_\beta^*} V^*,W) = 0$.
Shifting only shifts the indices so we have $\chi(L_{E_\beta^*} V^*[1],W) = 0$ as well.
In the derived category $L_{E_\beta^*} V^*[1]$ is isomorphic to the complex 
$$F_0^{*n_1} \to F_{-1}^{*n_2}$$
sitting in degrees -1 and 0.
Thus, $L_{E_\beta^*} V^*[1]$ and $W$ both correspond to Kronecker $\Hom(F_0^*,F_1^*)$-modules.
Call them $e$ and $f$ respectively.

Then $\chi(L_{E_\beta^*} V^*[1],W) = 0$ tells us that $\chi(e,f) = 0$ which implies that $\underline{\dim} \textnormal{ } f$ if a right-orthogonal dimension vector to $\underline{\dim} \textnormal{ }e$.
Since $M(V)$ is nonempty, Prop. \ref{prop:Kron birational fib} shows that $Kr_N(n_1,n_2)$ is nonempty.
If $Kr_N(n_1,n_2)$ is positive(0) dimensional, the discussion at the end of Subsection $6.1$ of \cite{CHW:CHW14} shows that $Kr_N(m_2,m_1)$ is as well.
Thus, $Kr_N(m_2,m_1)$ is nonempty as promised.

\end{proof}


\section{Primary extremal rays of the effective cone}
\label{sec: primary extremal rays of the effective cone}
In this section, we use the maps from $\MMM$ to Kronecker moduli spaces that we constructed in the previous section to give an alternate description of effective Brill-Noether divisors and to show that they are extremal.
Let $\xi^+$ a primary orthogonal Chern character to $\MMM$ with $V \in M(\xi^+)$ general.
The way in which we can express the Brill-Noether divisor $D_V$ depends greatly on the dimension of the Kronecker moduli space, $K$, that we map to (as dictated by the previous section). 

If $\dim(K) = 0$ (or a single $m_i$ is zero so we did not construct a fibration), then the Kronecker fibration is a map to a point so it does not give us any information so we do not use it at all. 
In this case, $V$ is an exceptional bundle.
This divisor consists exactly of those elements in $\MMM$ without the specified resolution, and the dual moving curve(s) are found by varying the maps in the resolution.

If $\dim(K) > 0$, then the Kronecker fibration is far more interesting.
In this case, $\xi^+$ may or may not be exceptional and the Brill-Noether divisor $D_V$ is the indeterminancy or exceptional locus of the map from $\MMM$ to the Kronecker moduli space.
Either this map is birational, in which case the moving curve is gotten by varying the Kronecker module, or the map has positive dimensional fibers, in which case the moving curve(s) are gotten by varying the other maps in the resolution to cover the fibers of the map.
If certain numeric inequalities hold, there are two dual moving curves covering the (positive dimensional) fibers of the map which implies that $D_V$ is the pullback of a generator of the ample cone of the Kronecker moduli space; in the case, the Brill-Noether divisor $D_V$ is also inside the movable cone.

Let $\{E_\alpha, E_\beta \}$ be an associated extremal pair to $\xi$ with orthogonal Chern character $\zeta$, $\{F_{-1},F_0\}$ be the left mutation of the minimally ranked right completion pair of $\{E_\alpha, E_\beta\}$, $U \in M(\xi)$ be a general element, and $K$ be the Kronecker moduli space containing the Kronecker module appearing in the resolution of $U$.

\subsection{The Zero Dimensional Kronecker Moduli Space Case}
\label{subsec:eff cone=  =}
\begin{thm}
\label{thm:eff cone= =}
Let $\xi^+$ be a primary orthogonal Chern character to $\{\alpha, \beta\}$ for the Chern character $\xi$ with $\dim(K)=0$ and let $V \in M(\xi^+)$ be the element.
Then the Brill-Noether divisor $$D_{V} = \{U' \in M(\xi) : h^1(U' \otimes V) \neq 0 \}$$
is on an edge of the effective cone of $M(\xi)$.
Using the isomorphism $\ns\left( M(\xi) \right) \isom \xi^\perp$, $D_{V}$ corresponds to $\xi^+$.
\end{thm}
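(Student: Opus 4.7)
The plan is to invoke Proposition \ref{prop:brillnoether} for the effectivity and class identification of $D_V$, and to construct dual moving curves on $\MMM$ proving extremality of the resulting ray.

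For effectivity, the hypothesis that $(E_\alpha,E_\beta)$ is an extremal pair together with $\dim(K)=0$ makes $V$ the unique exceptional bundle with Chern character $\xi^+$ and guarantees cohomological orthogonality of $V$ to the general $U\in\MMM$: the Euler characteristic vanishing $\chi(U\otimes V)=0$ is immediate from $\xi^+\in\xi^\perp$, and the individual vanishings $h^i(U\otimes V)=0$ follow by tensoring the appropriate Beilinson resolution of $U$ (from one of Theorems \ref{thm:resolution-neg pos}, \ref{thm:resolution-neg neg}, \ref{thm:resolution-pos pos}) by $V$ and using the non-speciality conditions built into Definitions \ref{def:extremal pair}, \ref{def:neg extremal pair}, and \ref{def:pos extremal pair}. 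Proposition \ref{prop:brillnoether} then gives $D_V$ effective, and Corollary \ref{cor: brill noether half space} places its class at $\lambda_M(\xi^+)$ in the primary half of $\ns(\MMM)$, establishing the class identification.

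For extremality, write the resolution of the general $U$ schematically as $0 \to A \to B \to U \to 0$, where $A$ and $B$ are the appropriate direct sums of exceptional bundles from the coil. Choose a one-parameter subfamily $\{\phi_t\}_{t\in \mathbb{A}^1}\subset\Hom(A,B)$ through the presenting map of $U$, with all maps injective with torsion-free cokernel; taking cokernels produces a rational curve $C\subset\MMM$. The universal family $\UU_C/C$ fits in a relative short exact sequence $0\to q^\ast A\to q^\ast B\to\UU_C\to 0$ on $C\times\QS$, so tensoring with $q^\ast V$ and pushing forward to $C$ gives
$$p_!(\UU_C\otimes q^\ast V)=\chi(B\otimes V)\,\OO_C - \chi(A\otimes V)\,\OO_C = \chi(U\otimes V)\,\OO_C = 0,$$
a trivial virtual bundle on $C$. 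Consequently $\lambda_{\UU_C}(\xi^+)\cong\OO_C$, so $C\cdot D_V=0$. Varying the pencil, these curves cover $\MMM\setminus D_V$ (an open dense subset), so $[C]$ is a moving class.

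Because $\rho(\MMM)=3$, a single moving curve only places $[D_V]$ on the boundary of $\Eff(\MMM)$; to promote this to extremality of the ray, we exhibit a second independent moving curve $C_2$ of the same kind---arising either from a different pencil in $\Hom(A,B)$ or from a mutation-equivalent resolution of $U$---with $C_2\cdot D_V=0$, and verify that $[C_1]$ and $[C_2]$ are linearly independent in $N_1(\MMM)$ by pairing against the basis $\{\LL_0,\LL_a,\LL_b\}$ from Subsection \ref{subsec:a basis for the Picard space}. Independence then forces $C_1^\perp\cap C_2^\perp=\mathbb{R}\,[D_V]$ in $N^1(\MMM)$, so $[D_V]$ spans an extremal ray. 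The principal obstacle is securing this second independent class: distinct pencils in $\Hom(A,B)$ are commonly identified by the $GL$-action on the resolution and yield the same numerical class on $\MMM$, so the second curve typically must come from an alternate resolution obtained by mutation of the controlling coil, and a case-by-case argument---indexed by the signs of $\chi(E_{-\alpha},U)$ and $\chi(E_{-\beta},U)$ and by which $m_i$ vanish---is required to confirm independence in every configuration.
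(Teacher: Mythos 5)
The central gap is the cohomological orthogonality of $V$ to the general $U\in\MMM$, which is precisely the delicate point of this theorem. You assert that $h^i(U\otimes V)=0$ follows by tensoring the Beilinson resolution of $U$ with $V$ and appealing to the ``non-speciality conditions built into'' Definitions \ref{def:extremal pair}, \ref{def:neg extremal pair}, \ref{def:pos extremal pair}; but those conditions (global generation of the $\sheafhom$ sheaves, prioritary cokernels, signs of Euler characteristics against the coil) govern the existence of the resolution of $U$ itself and say nothing about the cohomology of $U\otimes V$. Nor does knowing the cohomology of the individual terms $A\otimes V$ and $B\otimes V$ suffice: one must show the map $H^\bullet(A\otimes V)\to H^\bullet(B\otimes V)$ induced by the general presenting map has maximal rank. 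The paper's proof handles this by resolving $V$ by the same coil (the triangle $E_1^{n_2}\to V\to E_0^{n_1}$), using the coil's orthogonality to kill $\sheafhom(E_\beta^*,V)$ and $\sheafhom(E_\alpha^*(K),V)$, reducing to the vanishing of the cohomology of $\sheafhom(W^*,R_\beta V[1])$, and then invoking Thm.\ 6.1 of \cite{CHW:CHW14}: the Kronecker modules $f$ and $e$ attached to $W^*$ and $R_\beta V[1]$ are generically Hom-orthogonal. This Kronecker-module input is exactly the ``slightly tricky'' step flagged in the introduction, and it is absent from your argument. You also omit the case split forced by $\dim(K)=0$: at most one $m_i$ can vanish, and when exactly one does the argument is different (and easier) -- there $V$ is one of the four bundles of the coil and orthogonality is automatic from the coil's orthogonality properties.

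The remainder of your proposal follows the paper's route with minor, fixable gaps and one superfluous addition. Your moving curve is the paper's (vary one map of the resolution while fixing the others), but the family over $\AA^1$ must be replaced by a complete curve lying in the locus of stable, torsion-free cokernels; the paper takes $S=\PP\Hom(\cdot,\cdot)$, uses the standing assumption that the complement of the stable locus has codimension at least two to produce a complete moving curve through the general point, and gets duality by disjointness from $D_V$ -- your computation that $\lambda(\xi^+)$ has degree zero on the curve is an acceptable alternative once the moduli map is defined on all of it. Finally, your last paragraph attempts to upgrade ``lies on an edge'' to ``spans an extremal ray'' via a second independent moving curve and concedes that you cannot secure it in general; the theorem does not claim extremality of the ray (the paper states explicitly that the single moving curve only shows the divisor lies on an edge), so that incompleteness is harmless but should not be presented as a needed part of the proof of this statement.
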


\begin{proof}
The general element $U \in M(\xi)$ fits into the short exact sequence
$$0 \to E_\alpha^*(K)^{m_3}\osum F_0^{*m_2} \to  F_{-1}^{*m_1} \osum \left( E_\beta^*\right)^{m_0} \to U \to 0 $$
In order to show that $D_V$ is an effective Brill-Noether divisor, we have to show that $V$ is cohomologically orthogonal to $U$.
This means showing that $U \otimes V = \sheafhom(U^*,V)$ has no cohomology.
How we show this orthogonality depends upon which if any of the $m_i$ vanish.
Note that $\dim(K)=0$ implies that at most one of the $m_i$ is zero because if two are zero then $\dim(K) = \dim(M(\xi))$.

Assume that none of the $m_i$ are zero.
Then the general element, $U \in M(\xi)$, fits into the triangle
$$\left( E_\beta^*\right)^{m_0} \to U \to E_\alpha^*(K)^{m_3}[1] \osum W$$
where $W$ is the complex $\left(F_0^*\right)^{m_2} \to \left(F_{-1}^*\right)^{m_1}$ sitting in degrees -1 and 0.
Similarly, the general element, $V \in M(\xi^+)$, fits into the triangle
$$E_1^{n_2} \to V \to E_0^{n_1}.$$
By choice of resolving exceptional bundles, $\sheafhom(E_\beta^*,V)$ and $\sheafhom(E_\alpha^*(K),V) = 0$ have no cohomology.
Thus, to construct the divisor it suffices to show that $\sheafhom(W^*,V)$ has no cohomology.
As $\sheafhom(W^*,E_\beta)$ has no cohomology, this is equivalent to $\sheafhom(W^*,R_\beta V)$ having no cohomology.
We reduce further to showing that $\sheafhom(W^*,R_\beta V[1])$ has no cohomology as shifting merely shifts the cohomology.
Then if $f$ and $e$ are the Kronecker modules corresponding to $W^*$ and $R_\beta V[1]$, respectively, the vanishing of these cohomologies is equivalent to the vanishing of the $\Hom(f,e)$, but that vanishing follows directly from Thm. 6.1 of \cite{CHW:CHW14}. 
Thus, we have the orthogonality that we needed.

If one of the $m_i$ is zero (in which case we have not constructed a Kronecker fibration explicitly), then $V$ is one of the exceptional bundles $E_\beta$, $E_1$, $E_0$, or $E_\alpha$.
Then $V$ is cohomologically orthogonal to all three bundles that appear in the resolution of $U$ so it is automatically cohomologically orthogonal.

Thus, we have shown the cohomological orthogonality in either case.
The class of $D_V$ and the fact that it is effective is computed using Prop. \ref{prop:brillnoether}.

To show it lies on an edge, we construct a moving curve by varying a map in the resolution.

If $m_3 \neq 0$ and $m_0 \neq 0$, fix 
every map except $E_\alpha^*(K)^{m_3} \to \left( E_\beta^*\right)^{m_0}$, let
$$S= \PP \Hom \left(E_\alpha^*(K)^{m_3},\left( E_\beta^*\right)^{m_0}  \right), $$
and let $\mathcal{U} / S$ be the universal cokernel sheaf (of the fixed map plus the varying part).

If $m_3 = 0$, fix 
every map except $F_{0}^{*m_2} \to \left( E_\beta^*\right)^{m_0}$, let
$$S= \PP \Hom \left(F_0^{*m_2},\left( E_\beta^*\right)^{m_0}  \right), $$
and let $\mathcal{U} / S$ be the universal cokernel sheaf (of the fixed map plus the varying part).

If $m_0 = 0$, fix 
every map except $E_\alpha^*(K)^{m_3} \to \left( F_0^*\right)^{m_1},$ let
$$S= \PP \Hom \left(E_\alpha^*(K)^{m_3},\left( F_0^*\right)^{m_1}  \right), $$
and let $\mathcal{U} / S$ be the universal cokernel sheaf (of the fixed map plus the varying part).

In any case, we have our $\mathcal{U}$ and our $S$.
Because $\MMM$ is positive dimensional and the general sheaf in it has a resolution of this form, $S$ is nonempty.
Then $\mathcal{U}$ is a complete family of prioritary sheaves whose fixed Chern character lies above the $\delta$ surface.
Define the open set $S' \subset S$ by $$S' := \{s\in S: \mathcal{U}_s \textnormal{ is stable} \}$$ 
Thus, by assumption, the complement of $S'$ has codimension at least 2 which allows us to find a complete curve in $S'$ containing the point corresponding to $U$ for the general element $U\in M(\xi)$.
Notice that this is a moving curve by the codimension statement.
Any curve in $S'$ is disjoint from $D_{V}$ which makes the curve dual to it.

This curve makes $D_{V}$ be on an edge. 
As the resolution only provides one moving curve, this resolution only shows that it lies on an edge of the cone, not that it is an extremal ray.
\end{proof}


\subsection{The Positive Dimensional Kronecker Moduli Space Case}

\begin{thm}
\label{thm:eff cone neq neq}
Let $\xi^+$ be a primary orthogonal Chern character to $\{\alpha, \beta\}$ for the Chern character $\xi$ with $\dim(K)>0$ and $V \in M(\xi^+)$ be a general element.
Then the Brill-Noether divisor $$D_V = \{U' \in M(\xi) : h^1(U' \otimes V) \neq 0 \}$$
lies on the edge of the effective cone of $M(\xi)$.
Using the isomorphism $\ns\left( M(\xi) \right) \isom \xi^\perp$, $D_V$ corresponds to $\xi^+$.
\end{thm}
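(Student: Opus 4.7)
The plan is to parallel the structure of the proof of Theorem \ref{thm:eff cone= =}, establishing (i) cohomological orthogonality of $V$ to the general $U \in M(\xi)$ (so that Proposition \ref{prop:brillnoether} produces $D_V$ as an effective divisor with class $\lambda_M(\xi^+)$), and (ii) a dual moving curve through the general point of $M(\xi)$, certifying that $[D_V]$ lies on an edge of $\Eff(M(\xi))$. The new ingredient, compared with Theorem \ref{thm:eff cone= =}, is that now the Kronecker fibration $\pi : M(\xi) \dashrightarrow K$ of Proposition \ref{prop:Kron large fib} is genuinely informative and will be used to produce the dual curve.

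For (i), I would apply the resolutions from Section \ref{sec: Beilinson Spec seq} to both $U$ and $V$. The general $U$ sits in a triangle
$$E_\beta^{*m_0} \to U \to \left(E_\alpha^*(K)^{m_3} \osum W\right)[1],$$
where $W = [F_0^{*m_2}\to F_{-1}^{*m_1}]$ corresponds by Proposition \ref{prop: Kron} to a semistable Kronecker module $f$, while the general $V \in M(\xi^+)$ has an analogous resolution with Kronecker part $e$. By the coil orthogonality of $\{E_\alpha^*(K), F_0^*, F_{-1}^*, E_\beta^*\}$ and the extremal-pair hypotheses, $\sheafhom(E_\alpha^*(K), V)$ and $\sheafhom(E_\beta^*, V)$ have vanishing cohomology, so the orthogonality of $U \otimes V$ reduces to vanishing of the cohomology of $\sheafhom(W^*, V)$; left-mutating through $E_\beta^*$ and shifting, this becomes $\Hom(f, e) = 0$ for general modules. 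The dimension-vector computation $\chi(V^*, W) = 0$ carried out in the proof of Proposition \ref{prop:Kron large fib} shows that these Kronecker modules have right-orthogonal dimension vectors, and then the general $\Hom$-vanishing between such modules (Theorem 6.1 of \cite{CHW:CHW14}) gives what we need.

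For (ii), I would split into the two subcases of $\pi$. If $\pi$ is birational, a general pencil of Kronecker modules in $K$ lifts through $\pi^{-1}$ to a one-parameter family in $M(\xi)$; since the unstable locus in the parameter space has codimension $\geq 2$ by our standing assumption and the lifted sheaves are prioritary, a complete moving curve through the general point exists, and because $[D_V]$ is (away from the contracted locus) the pullback of an ample generator of $\pic(K)$ it meets this curve in a point-count determined by an intersection on $K$ independent of the lift, producing a dual moving curve. If $\pi$ has positive-dimensional fibers, I would instead fix the Kronecker part $W$ of the resolution and vary one of the other maps — for instance the component $E_\alpha^*(K)^{m_3} \to E_\beta^{*m_0}$ — through a $\PP^1$-family; the resulting family of cokernels is a complete family of prioritary sheaves, generically stable, and gives a complete curve contained in a single $\pi$-fiber, hence disjoint from the pullback divisor $D_V$. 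In either situation the curve moves as its starting point is varied, so $[D_V]$ lies on an edge of $\Eff(M(\xi))$.

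The main obstacle is making the identification $D_V = \pi^{-1}(H)$ for an ample $H \subset K$ precise enough to guarantee that the curves built in (ii) actually have zero intersection with $D_V$. What is needed is that the jumping locus $\{h^1(U' \otimes V) \neq 0\}$ depends only on the Kronecker module $f(U') \in K$, not on the ancillary choices of maps to and from $E_\beta^*$ and $E_\alpha^*(K)$ in the resolution of $U'$. This follows in principle from the fact (established already in step (i)) that $V$ is cohomologically orthogonal to $E_\alpha^*(K)$ and $E_\beta^*$, so long exact sequences propagate $h^1(U' \otimes V)$ back to a cohomology of $\sheafhom(W(U')^*,V)$ which is controlled entirely by $f(U')$ pairing against $e(V)$; but verifying this uniformly across the domain of $\pi$ (not merely generically) and checking that the complete family produced in (ii) avoids the codimension-$\geq 2$ unstable locus are the two delicate technical points on which the rest of the argument rests.
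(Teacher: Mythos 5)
Your overall strategy coincides with the paper's: you establish cohomological orthogonality of $V$ to the general $U$ by reducing, via the coil orthogonality of $\{E_\alpha^*(K),F_0^*,F_{-1}^*,E_\beta^*\}$, a mutation through $E_\beta^*$ and a shift, to a $\Hom$-vanishing between the Kronecker modules $f$ and $e$ with orthogonal dimension vectors (Thm. 6.1 of \cite{CHW:CHW14}), you invoke Prop. \ref{prop:brillnoether} for effectivity and the class, and you produce dual moving curves by splitting according to whether $\pi$ is birational or has positive-dimensional fibers; your fiber subcase (fix the Kronecker part, vary the remaining maps to sweep out curves inside a fiber, which are dual to $D_V$ because the jumping condition is detected by the Kronecker module alone) is exactly the paper's argument.

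The gap is in your birational subcase. You assert that $[D_V]$ is (away from the contracted locus) the pullback of an ample generator of $\pic(K)$, that it meets the lifted pencil of Kronecker modules in a number computed on $K$, and that this ``produces a dual moving curve.'' But a moving curve certifying that $[D_V]$ lies on an edge must have intersection number \emph{zero} with $D_V$; if $D_V$ were $\pi^*$ of an ample class, then by the projection formula its intersection with the lift of a moving curve in $K$ would be strictly positive, so your own description defeats the duality you need. The paper argues differently here: in the birational case (where either zero or two of the $m_i$ vanish — in the latter case $V$ is an exceptional bundle and orthogonality is immediate, a case your write-up does not address), $D_V$ is identified with the exceptional locus of $\pi$, i.e. the locus where the resolution, equivalently the associated Kronecker module, degenerates; duality then holds because $K$ has Picard rank one, so it carries a moving curve $C$, and $\pi^*(C)$ can be taken to avoid the contracted locus, giving $D_V\cdot \pi^*(C)=0$. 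To repair your argument you must show that when $\pi$ is birational the Brill--Noether jumping locus is contracted by $\pi$, rather than being pulled back from an ample divisor on $K$.
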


\begin{proof}
Recall that we have a dominant rational map $\pi: \MMM \dashrightarrow K$, 
There are two possibilities; either $\pi$ is a birational map or $\pi$ has positive dimensional fibers.

\textit{Birational Case}
In this case, either zero or two of the $m_i$ can vanish.
If zero vanish, we show that $V$ is cohomologically orthogonal to $U$ by the same arguments as the previous theorem.
If two vanish, then $V$ is again one of the exceptional bundles so orthogonality is immediate.
The class of $D_V$ and the fact that it is effective is computed using Prop. \ref{prop:brillnoether}, and $D_V$ is the exceptional locus of $\pi$.
Using that fact, we get a dual moving curve to $D_V$ by varying the Kronecker module.
Formally, because $K$ is Picard rank one, there is a moving curve $C$.
Then $[\pi^*(C)]$ is a moving curve which is dual to the exceptional locus of $\pi$ (i.e. dual to $D_V$).
Thus, $D_V$ is on the edge of the effective cone.

\textit{Positive dimensional fiber case}
In this case, none of the $m_i$ is zero.

We first show cohomological orthogonality. 
This means showing that $U \otimes V = \sheafhom(U^*,V)$ has no cohomology.
In this case, the general element, $U \in M(\xi)$, fits into the triangle
$$\left( E_\beta^*\right)^{m_0} \to U \to E_\alpha^*(K)^{m_3}[1] \osum W$$
where $W$ is the complex $\left(F_0^*\right)^{m_2} \to \left(F_{-1}^*\right)^{m_1}$ sitting in degrees -1 and 0.
Similarly, the general element, $V \in M(\xi^+)$, fits into the triangle
$$E_1^{n_2} \to V \to E_0^{n_1}.$$
By choice of resolving exceptional bundles, $\sheafhom(E_\beta^*,V)$ and $\sheafhom(E_\alpha^*(K),V)$ have no cohomology.
Thus, to construct the divisor it suffices to show that $\sheafhom(W^*,V)$ has no cohomology.
As $\sheafhom(W^*,E_\beta)$ has no cohomology, we have that this is equivalent to $\sheafhom(W^*,R_\beta V)$ having no cohomology.
We reduce further to showing that $\sheafhom(W^*,R_\beta V[1])$ has no cohomology as shifting merely shifts the cohomology.
Then if $f$ and $e$ are the Kronecker modules corresponding to $W^*$ and $R_\beta V[1]$, respectively, the vanishing of these cohomologies is equivalent to the vanishing of the $\Hom(f,e)$, but that vanishing follows directly from Thm. 6.1 of \cite{CHW:CHW14} 


We have now established the cohomological orthogonality.
The class of $D_V$ and the fact that it gives an effective divisor are computed using Prop. \ref{prop:brillnoether}.

As the general $U$ has the given resolution, the fibers of map to $K$ are covered by varying the other maps of the resolution as we did in the last proof. 
As these moving curves sit inside fibers, they are dual to $D_V$ since $V$ is dual to the Kronecker modules in $K$.

If we can vary two different maps in the resolution other than the Kronecker module independently, than $D_V$ has the same class as the pullback of an ample divisor of $K$.
This immediately implies that $D_V$ is in the moving cone.
We can vary two maps independently if the no $m_i$ is zero and no subcomplex of the resolution has enough dimensions to account for all of the dimensions of our moduli space.
\end{proof}

These theorems together give an effective divisor on $\MMM$.
These conjecturally might give a spanning set of effective divisors for the effective cone of $\MMM$, but even if this method does not do that, it gives a way to construct effective divisors on many of these moduli spaces.
In addition, this same method works to give secondary extremal rays when the rank of the moduli space is at least three (for rank less than three all secondary rays have special meaning).

\section{Examples}
\label{sec: examples}

The method of the previous section constructs Brill-Noether divisors on the faces of the effective cone for moduli spaces of sheaves on $\QS$. 
In this section, we work out a series of examples showing the usefulness of our theorems.
We work out the effective cones of the first fifteen Hilbert schemes of points as well as some series of extremal rays that occur for infinitely many Hilbert schemes of points on $\QS$.
Lastly, we provide an extremal edge for the effective cone of a moduli space of rank two sheaves with nonsymmetric slope so that we see the theorems are useful in that setting as well.


\subsection{The Effective Cones of Hilbert Schemes of at Most Sixteen Points}
\label{subsec: n less than 17}

The most classical example of a moduli space of sheaves on $\QS$ is Hilbert scheme of $n$ points on it.
For these Hilbert schemes, the Picard group has a classical basis, $\{B, H_1,H_2\}$.
Each element of this basis has an extremely geometric interpretation.
$B$ is the locus of nonreduced schemes or equivalently the schemes supported on $n-1$ or fewer closed points.
$H_1$ is the schemes whose support intersects a fixed line of type $(1,0)$.
Similarly, $H_2$ is the schemes whose support intersects a fixed line of type $(0,1)$.

Using this basis, every ray in the N\'eron-Severi space is spanned by a ray of the form $B$, $aH_1+bH_2+B$, $aH_1 +bH_2$, or $iH_1+jH_2-\frac{B}{2}$.
Then we fix the notation for the last two types of ray as 
$$Y_{a,b} = aH_1+bH_2 \textnormal{  and }$$
$$X_{i,j} = iH_1 +j H_2 -\frac{1}{2}B.$$

Using this notation, we list the extremal rays of the effective cones of $\left(\QS\right)^{\left[n\right]}$ 
for $n\leq 16$, explicitly work out the case of $n=7$, prove that some sequences of rays are extremal for varying $n$, and then finally explicitly work out each remaining extremal ray for $n\leq 16$.
These are all new results except for the cases of $n\leq 5$.

\begin{center}
\begin{tabular}{|c|c|}
 \hline
 n & Extremal Rays  \\ 
 \hline
 2 & $B$, $X_{1,0}$, and $X_{0,1}$  \\ 
 \hline
 3 & $B$, $X_{2,0}$, and $X_{0,2}$ \\ 
 \hline
 4 & $B$, $X_{3,0}$, $X_{1,1}$, and $X_{0,3}$ \\ 
 \hline
 5 & $B$, $X_{4,0}$, $X_{\frac{4}{3},\frac{4}{3}}$, and $X_{0,4}$ \\ 
 \hline
 6 & $B$, $X_{5,0}$, $X_{2,1}$, $X_{1,2}$, and $X_{0,5}$ \\ 
 \hline
 7 & $B$, $X_{6,0}$, $X_{\frac{12}{5},\frac{6}{5}}$, $X_{2,\frac{3}{2}}$, $X_{\frac{3}{2},2}$, $X_{\frac{6}{5},\frac{12}{5}}$, and $X_{0,6}$ \\ 
 \hline
 8 & $B$, $X_{7,0}$, $X_{3,1}$, $X_{1,3}$, and $X_{0,7}$ \\ 
 \hline
 9 & $B$, $X_{8,0}$, $X_{\frac{24}{7},\frac{8}{7}}$, $X_{2,2}$, $X_{\frac{8}{7},\frac{24}{7}}$, and $X_{0,8}$ \\ 
 \hline
 10 & $B$, $X_{9,0}$, $X_{4,1}$, $X_{\frac{5}{2},2}$, $X_{2,\frac{5}{2}}$, $X_{1,4}$, and $X_{0,9}$ \\ 
 \hline
 11 & $B$, $X_{10,0}$, $X_{\frac{40}{9},\frac{10}{9}}$, $X_{4,\frac{4}{3}}$, $X_{\frac{12}{5},\frac{12}{5}}$, $X_{\frac{4}{3},4}$, $X_{\frac{10}{9},\frac{40}{9}}$,and $X_{0,10}$ \\ 
 \hline
 12 & $B$, $X_{11,0}$, $X_{5,1}$, $X_{3,2}$, $X_{2,3}$, $X_{1,5}$, and $X_{0,11}$ \\ 
 \hline
 13 & $B$, $X_{12,0}$, $X_{\frac{60}{11},\frac{12}{11}}$, $X_{\frac{9}{2},\frac{3}{2}}$, $X_{\frac{7}{2},2}$, $X_{\frac{8}{3},\frac{8}{3}}$, $X_{2,\frac{7}{2}}$, $X_{\frac{3}{2},\frac{9}{2}}$, $X_{\frac{12}{11},\frac{60}{11}}$, and $X_{0,12}$ \\ 
 \hline
 14 & $B$, $X_{13,0}$, $X_{6,1}$, $X_{\frac{10}{3},\frac{7}{3}}$, $X_{\frac{7}{3},\frac{10}{3}}$, $X_{1,6}$, and $X_{0,13}$ \\ 
 \hline
 15 & $B$, $X_{14,0}$, $X_{\frac{84}{13},\frac{14}{13}}$, $X_{4,2}$, $X_{2,4}$, $X_{\frac{14}{13},\frac{84}{13}}$, and $X_{0,14}$ \\ 
 \hline
 16 & $B$, $X_{15,0}$, $X_{7,1}$, $X_{\frac{9}{2},2}$, $X_{3,3}$, $X_{2,\frac{9}{2}}$, $X_{1,7}$, and $X_{0,15}$ \\ 
  
 \hline
\end{tabular}
\end{center}


\subsection{The Effective Cone of the Hilbert Scheme of $7$ Points}
\label{subsec: n equals 7}

It is worth showing how the theorem is applied in one these cases to compute the effective cone. 
Recall that the general strategy to compute an effective cone has two steps.
First, provide effective divisors.
Second, provide moving curves which are dual to the effective divisors.

We use our main theorem to do this for the primary extremal rays of the effective cone;
we have to deal with the secondary extremal rays separately. 
There is a single secondary extremal ray which is spanned by $B$.
$B$ is clearly an effective divisor as it is the locus of nonreduced schemes. 
In order to show that $B$ spans an extremal ray, we just have to construct two distinct dual moving curves.

We now construct these moving curves, $C_1$ and $C_2$.
We construct $C_1$ by fixing $6$ general points and then varying a seventh point along a curve of type $(1,0)$.
Similarly, we construct $C_2$ by fixing $6$ general points and then varying a seventh point along a curve of type $(0,1)$.
Any set of $7$ distinct points lies on at least one curve of type $C_1$, so it is a moving curve.
Similarly, $C_2$ is a moving curve.

We now show that $C_1$ and $C_2$ are dual to $B$.
Starting with six general points, we can find a line $l$ of type $(1,0)$ that does not contain any of the points.
We get a curve $C'$ of type $C_1$ in the Hilbert scheme by varying the seventh point along $l$.
As $l$ does not contain any of the six general points, every point in $C'$ corresponds to seven distinct points, so $C'$ does not intersect $B$.
Thus, we get
$$C_1 \cdot B = C' \cdot B = 0.$$ 
Similarly, we get that 
$$C_2 \cdot B = 0.$$

The only thing left to do in order to show that $B$ spans an extremal ray is to show that $C_1$ and $C_2$ have distinct classes.
Starting with six general points, we find lines $l$ and $l'$ of type $(1,0)$ that does not contain any of the points.
Again, we get a curve $C'$ of type $C_1$ in the Hilbert scheme by varying the seventh point along $l$.
Analogously, we get a divisor $H'$ of type $H_1$ as the locus of schemes whose support intersects $l'$.
As $l'$ does not contain any of the general fixed points and does not intersect $l$, we get that $H_1$ and $C'$ are disjoint.
Thus, 
$$C_1 \cdot H_1 = C' \cdot H' = 0.$$
Using the same six general points, we find a line $l_0$ of type $(0,1)$ that does not contain any of the points.
We get a curve $C_0$ of type $C_2$ by varying the seventh point along $l_0$.
Then $l_0$ does not contain any of the six general points by construction but does intersect $l'$ in exactly one point.
Thus, 
$$C_2 \cdot H_1 = C_0 \cdot H' =  1.$$
As $C_1 \cdot H_1 \neq C_2 \cdot H_1$, we know that $C_1$ and $C_2$ are distinct classes.
This observation completes the proof that $B$ spans an extremal ray.

While constructing the primary extremal rays, we will construct two moving curves dual to $B$.
These curves will show that $B$ is the only secondary extremal ray.
Also as the slope of the ideal sheaf is $(0,0)$, the effective cone is symmetric in the coordinates of $H_1$ and $H_2$ so we only deal with the primary rays spanned by $X_{i,j}$ where $i\geq j$. 
Keeping that in mind, we move to computing the primary extremal rays using our theorem.

One way to think about the main results of this paper are that they give an algorithm to compute the primary extremal rays of the effective cone of $\MMM$.
That algorithm breaks down roughly into four steps: find the extremal pairs, use the extremal pairs to resolve the general object of $\MMM$, use those resolutions to construct maps to moduli spaces of Kronecker modules, and analyze these maps to find divisors spanning extremal rays.
Let's follow those steps in this specific case.

\subsubsection*{Step 1}
As we noted above, the first step is to find all of the extremal pairs.
Proceed by finding all controlling exceptional bundles, finding the controlling pairs, and then finding which are extremal pairs.

Controlling exceptional bundles are those controlling the $\delta$-surface over the locus
$$\left\{X \in \left(1,\mu,\frac{1}{2}\right) \subset K(\QS): \chi(X\otimes \mathcal{I}_z) = 0 \textnormal{ for } \mathcal{I}_z \in \left(\QS\right)^{[n]}\right\}.$$
Using Mathematica, we find that these controlling exceptional bundles are 
$$\{\cdots, \{0, 4, 1\}, \{0, 5, 1\}, \{0, 6, 1\}, \{0, 7, 1\}, \{0, 8, 1\}, \{0, 9, 1\}, \{0, 10, 1\}, \{0, 11, 1\}, \{0, 12, 1\}, \{0, 13, 1\},$$
$$\{0, 14, 1\},
\{1, 27, 5\}, \{1, 13, 3\}, \{2, 11, 3\}, 
\{1, 2, 1\}, \{1, 3, 1\}, \{1, 4, 1\},  
\{6, 12, 5\}, 
\{2, 1, 1\}, \{2, 2, 1\},\cdots\}$$
where we record an exceptional bundle with Chern character $\left(r,(\mu_1,\mu_2),\Delta\right)$ as $(\mu_1,\mu_2,r)$ and we truncate the list when bundles can no longer possibly matter. 
We will see that the ones we have truncated do not matter as our first resolution will be dual to $B$.

There are many, many controlling exceptional pairs, but we do not need to see all of them.

Finally, we check to see which of these are extremal pairs.
They are whittled down by eliminating each pair that does not have each of the properties of an extremal pair.
The only four controlling pairs that are extremal pairs are
\newline
\begin{center}
$\{\OO(6,0),\OO(7,0)\}$, $\{\OO(3,1),\OO(6,0)\}$, $\{\OO(2,1),\OO(3,1)\}$, and $\{\OO(2,1),\OO(2,2)\}$.
\end{center}

Each extremal pair controls an extremal ray of the effective cone.
Recall that given an extremal pair $\{A,B\}$, the extremal ray it corresponds to is spanned by the primary orthogonal Chern character of the pair: $\ch(A)$, $\ch(B)$, or
$$p = \{ X \in K(\QS) : Q_{\xi,A}(X) = Q_{\xi,B} (X) = \chi(\mathcal{I}_z \otimes X) = 0\}.$$
Then the primary orthogonal Chern character for our exceptional pairs are $\left( 1,(6,0),0\right)$, $\left( 1,(6,0),0\right)$, $\left( 5,(12,6),12\right)$, and $\left( 2,(4,3),5\right)$, respectively.
These Chern characters correspond to the extremal rays $X_{6,0}$, $X_{6,0}$, $X_{\frac{12}{5},\frac{6}{5}}$, and $X_{2,\frac{3}{2}}$, respectively.  
Notice that one of the rays is repeated twice.
This repetition is because we need all of these extremal pairs to share each of their elements with another extremal pair in order to link neighboring extremal rays with moving curves.

\subsubsection*{Step 2}
The next step in computing the effective cone is to turn the extremal pairs into resolutions of the general element of the Hilbert scheme.
We will use Thm. \ref{thm:resolution-neg pos} and Thm. \ref{thm:resolution-pos pos} to get these resolutions.
To apply those theorems, we have to complete the pairs to coils as described in Section \ref{sec: Beilinson Spec seq}.
This approach gives the coils
$$\{\OO(-7,-1),\OO(-6,-1),\OO(-7,-0),\OO(-6,0)\}$$ 
$$\{\OO(-7,-2),\OO(-4,-1),\OO(-3,-1),\OO(-6,0)\},$$ 
$$\{\OO(-4,-3),\OO(-4,-2),\OO(-3,-2),\OO(-3,-1)\},\textnormal{ and}$$ 
$$\{\OO(-4,-3),\OO(-3,-2),\OO(-3,-1),\OO(-2,-2)\},$$ 
respectively. 

Given these coils, we get the resolutions we wanted using the spectral sequence as in the proofs of the relevant theorems.
Following the proof, we get the resolutions 
$$0  \to  \OO(-7,-1)^7  \to   \OO(-6,-1)^7 \osum \OO(-7,0)  \to  \mathcal{I}_z  \to  0,$$  
$$0  \to  \OO(-7,-2)  \to  \OO(-4,-1) \osum \OO(-3,-1)  \to  \mathcal{I}_z  \to  0,$$
$$0  \to  \OO(-4,-3) \osum \OO(-4,-2)^2  \to  \OO(-3,-2)^3 \osum \OO(-3,-1)  \to  \mathcal{I}_z  \to  0, \textnormal{ and}$$ 
$$0  \to  \OO(-4,-3) \osum \OO(-3,-2)  \to  \OO(-3,-1) \osum \OO(-2,-2)^2  \to  \mathcal{I}_z  \to  0,$$
respectively. 

\subsubsection*{Step 3}
We now get to the third step in the process, turning the resolutions into maps to Kronecker moduli spaces.
There are no Kronecker modules that are used in the first two resolutions and the Kronecker module in each of the last two resolutions are 
$$\OO(-4,-2)^2  \to  \OO(-3,-2)^3 \textnormal{ and}$$ 
$$\OO(-3,-2)  \to  \OO(-3,-1),$$
respectively. 

This means that we have the maps
$$\pi_1: \left( \QS\right)^{[7]} \dashrightarrow Kr_{\hom(\OO(3,2),\OO(4,2))}(3,2), \textnormal{ and}$$
$$\pi_2: \left( \QS\right)^{[7]} \dashrightarrow Kr_{\hom(\OO(3,1),\OO(3,2))}(1,1),$$
respectively.

Note that the dimensions of these Kronecker moduli spaces are $0$ and $1$, respectively, so we will only consider the map in the last case.

\subsubsection*{Step 4}
The fourth and final step is actually computing the effective divisors and their dual moving curves.
Let $D = aH_1 +bH_2 -c \frac{B}{2}$ be a general effective divisor.

In the first case, the Brill-Noether divisor is $D_V$ where $V = \OO(6,0)$.
The moving curve 
comes from a pencil of maps $\OO(-7,-1)^7 \to \OO(-6,-1)^7$. 
The restriction this moving curve places on $D$ is that $b \geq 0$.
In particular, $B$ and $X_{6,0}$ are dual to this moving curve.

In the second case, the Brill-Noether divisor is $D_V$ where $V = \OO(6,0)$.
The 
moving curve 
comes from a pencil of maps $\OO(-7,-2) \to \OO(-4,-1)$.
The restriction this moving curve places on $D$ is that $3b \geq 6-a$.
In particular, $X_{6,0}$ and $X_{\frac{12}{5},\frac{6}{5}}$ are dual to this moving curve.

For $\pi_1$, the Brill-Noether divisor is $D_V$ where $V$ is the exceptional bundle $E_{\frac{12}{5},\frac{6}{5}}$.
Notice that in this case, the Kronecker fibration is a map to a point.
This implies that the divisor $D_V$ is rigid.
The two types of moving curve come from pencils of maps $\OO(-4,-3) \to  \OO(-3,-2)^3$ and $\OO(-4,-2)^2 \to \OO(-3,-1)$.
These are dual to $D_V$ by the resolution
$$0 \to \OO(3,0)^4 \to E_{\frac{8}{3},\frac{2}{3}}^3 \to V \to 0$$
since $\chi((3,2),(4,3)) = 12+6-2*3*3 = 0$.
The restriction these two moving curves place on $D$ are that $3b \geq 6-a$ and $4b \geq 12-3a$.
In particular, $X_{\frac{12}{5},\frac{6}{5}}$ is dual to both moving curves and $X_{2,\frac{3}{2}}$ is dual to the second moving curve.

For $\pi_2$, the Brill-Noether divisor is $D_V$ where $V$ is a bundle $F_{2,\frac{3}{2}}$ that has Chern character $(2,(4,3),5)$.
The two types of moving curve covering each fiber come from pencils of maps $\OO(-4,-3) \to  K$ and $\OO(-4,-3) \to \OO(-2,-2)^2$.
These are dual to $D_V$ by the resolution
$$0 \to V \to E_{\frac{7}{3},\frac{4}{3}} \to \OO(3,1) \to 0$$
since $\chi((1,1),(1,1)) = 2*1*1-1*1-1*1+1*1 = 0$.
The restriction these two moving curve place on $D$ are that $4b\geq 12-3a$ and $2b\geq 7-2a$.
In particular, $X_{2,\frac{3}{2}}$ is dual to both moving curves, $X_{\frac{12}{5},\frac{6}{5}}$ is dual to the first moving curve, and $X_{\frac{3}{2},2}$ is dual to the second moving curve.

We have now exhibited 7 effective divisors $\left( B, X_{6,0}, X_{\frac{12}{5},\frac{6}{5}},X_{2,\frac{3}{2}},X_{\frac{3}{2},2},X_{\frac{6}{5},\frac{12}{5}},X_{0,6},\right)$ and 7 moving curves that are dual to each pair of extremal rays that span a face of the effective cone.
Taken together, these divisors and moving curves determine the effective cone.


\subsection{Infinite Series of Extremal Rays}
\label{subsec: infinite series of extremal rays}

As another example of the power of the methods produced in this paper, we can construct an extremal ray in the Hilbert scheme of $n$ points for infinite sequences of $n$.
We provide two extremal rays for three such sequences and one extremal ray for a fourth sequence.
The strategy for each proof is to first find an extremal pair, then use the process outlined by our theorems to show that they give the desired extremal ray(s).

The first sequence we look at is actually just all $n$.
For this sequence, we prove what the edges of the effective cone that share the secondary extremal ray are.
\begin{prop}
The edge spanned by $X_{n-1,0}$ and $B$ is an extremal edge of the effective cone of $\left( \QS \right)^{[n]}$.
Similarly, the edge spanned by $X_{n-1,0}$ and $B$ is an extremal edge of the effective cone of $\left( \QS \right)^{[n]}$.
\end{prop}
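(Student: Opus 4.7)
The plan is to apply the extremal-pair machinery of Sections 3--5 with the candidate pair $(E_\alpha,E_\beta) = (\OO(n-1,0), \OO(n,0))$ for the ray $X_{n-1,0}$. Since
\[\chi\bigl(\OO(n-1,0)\otimes \mathcal{I}_Z\bigr) = n - n = 0 \qquad \text{and} \qquad \chi\bigl(\OO(n,0)\otimes \mathcal{I}_Z\bigr) = (n+1) - n = 1,\]
this pair lies in case (1) of Section 3.3 with the left Euler characteristic vanishing; case (4) then identifies the primary orthogonal Chern character as $\ch\OO(n-1,0)$, which under $\lambda_M$ sits on the ray $X_{n-1,0}$. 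The remaining conditions of Definition \ref{def:pos extremal pair} reduce to standard line-bundle computations (global generation of each $\sheafhom$ between nearby line bundles, prioritariness of quotients of globally generated direct sums, and rank-forced values of the $\Delta_i$). Applying Theorem \ref{thm:resolution-pos pos} to the completed coil produces the resolution
\[0 \to \OO(-n,-1)^n \to \OO(-(n-1),-1)^n \oplus \OO(-n,0) \to \mathcal{I}_Z \to 0\]
for a general $\mathcal{I}_Z$, entirely analogous in form to the $n=7$ resolution worked out in Section 7.2.

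Proposition \ref{prop:brillnoether} then yields an effective Brill--Noether divisor $D_{\OO(n-1,0)}$ of class $X_{n-1,0}$; geometrically it is the locus of length-$n$ subschemes on which two points share a common $(1,0)$-fiber, where $h^0(\mathcal{I}_Z(n-1,0))$ jumps. Because $m_0 = \hom(E_{-\alpha},\mathcal{I}_Z) = 0$ the associated Kronecker moduli space is zero-dimensional, so Theorem \ref{thm:eff cone= =} applies and produces one dual moving curve by varying the resolution maps within the stable locus of the universal family of cokernels (whose complement is codimension at least two by the standing assumption). For the linking moving curve, I would take $C_1 \subset (\QS)^{[n]}$ to be the family obtained by fixing $n-1$ general points and sliding the $n$-th point along a general $(1,0)$-fiber disjoint from them; a direct computation yields $C_1\cdot B = 0$, $C_1\cdot H_1 = 0$, and $C_1\cdot H_2 = 1$, so in particular $C_1\cdot X_{n-1,0} = 0$ and $C_1\cdot B = 0$. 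Hence $C_1$ is a moving curve dual simultaneously to both claimed rays, placing them on a common two-dimensional face of the effective cone.

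Together, $C_1$ and the resolution-based moving curve furnish two independent dual moving curves for $X_{n-1,0}$, while $C_1$ and its analog $C_2$ (sliding along a $(0,1)$-fiber, for which $C_2\cdot H_1 = 1$, $C_2\cdot H_2 = 0$, $C_2\cdot B = 0$) furnish two independent dual moving curves for $B$; this exhibits each as an extremal ray in the style of the $n=7$ analysis of Section 7.2, so the common face they span is an extremal edge. The ``similarly'' half of the statement, which by the evident $\QS$-symmetry should read $X_{0,n-1}$ in place of the second $X_{n-1,0}$, follows verbatim after exchanging the two rulings and using the extremal pair $(\OO(0,n-1),\OO(0,n))$. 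The main technical obstacle is the uniform-in-$n$ verification of conditions (5) and (6) of Definition \ref{def:pos extremal pair}, which though elementary for line-bundle coils must be organized so that the prioritariness hypothesis is established for every $n\geq 2$; a secondary point is to confirm that the resolution-based moving curve is linearly independent of $C_1$, which reduces to checking that a generic variation of $\OO(-n,-1)^n \to \OO(-(n-1),-1)^n \oplus \OO(-n,0)$ produces subschemes whose intersection with a fixed $(1,0)$-fiber varies nontrivially.
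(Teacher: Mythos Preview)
Your extremal pair $(\OO(n-1,0),\OO(n,0))$ is a legitimate choice (it is the one used in the paper's $n=7$ worked example, though the general proposition in the paper instead takes $\{\OO(n-1,1),\OO(n,0)\}$), and with the right completion it does yield the resolution you wrote down. So the construction of the effective divisor $D_{\OO(n-1,0)}$ in the class $X_{n-1,0}$ is fine and matches the paper's.

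The genuine gap is your independence claim for the two moving curves through $X_{n-1,0}$. The resolution-based moving curve here is a pencil in $\Hom\bigl(\OO(-n,-1)^n,\OO(-(n-1),-1)^n\bigr)$, and the restriction it imposes on an effective class $aH_1+bH_2-c\tfrac{B}{2}$ is exactly $b\geq 0$; in other words, this curve pairs to zero with $H_1$ and with $B$ and positively with $H_2$. But your geometric curve $C_1$ (slide a point along a $(1,0)$-fiber) has the \emph{same} intersection numbers: $C_1\cdot H_1 = 0$, $C_1\cdot H_2 = 1$, $C_1\cdot B = 0$. In a rank-three N\'eron--Severi space this forces the two curve classes to be proportional, so they do not give two independent constraints at $X_{n-1,0}$. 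Your ``secondary point to confirm'' is therefore not confirmable; the check you propose would fail.

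The paper handles this by explicitly \emph{not} claiming extremality of $X_{n-1,0}$ within this proposition: it produces the single dual moving curve (restriction $b\geq 0$), observes that $B$ and $X_{n-1,0}$ are both dual to it, and then defers the second dual curve for $X_{n-1,0}$ to the next two propositions (separately for $n=2k$ and $n=2k+1$), where a genuinely different resolution---coming from the pairs $\{\OO(2k-1,0),\OO(k,1)\}$ and $\{\OO(2k,0),\OO(k,1)\}$ respectively---supplies a moving curve with a different slope restriction. Your argument would be repaired by importing one of those resolutions rather than by pairing the present one with $C_1$.
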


\begin{proof}
This was proved for $n \leq 5$ in \cite{BC:BC13}.
It is immediate from the symmetry of the effective cone in terms of $a$ and $b$ that the second statement is immediate from the first statement.
We now prove the first statement.

The first step in proving this edge is an extremal ray is finding an extremal pair.
To find an extremal pair, we first have to find the two controlling exceptional bundles that will make up the pair.
The vector bundle whose Brill-Noether divisor will span the ray $X_{n-1,0}$ is $\OO(n-1,0)$.
To find our controlling exceptional bundles, we need to find exceptional bundles cohomologically orthogonal to $\OO(n-1,0)$.
Then,
$$\chi(\OO(n-1,1),\OO(n-1,0))= 0 \textnormal{ and } \chi(\OO(n,0),\OO(n-1,0)) = 0.$$
Then it is easy to see that $\OO(n-1,1)$ and $\OO(n,0)$ are controlling exceptional bundles for the Hilbert scheme, and the pair $\{\OO(n-1,1),\OO(n,0)\}$ is an extremal pair.

Once we have the extremal pair, we need to turn it into a resolution of the general object of the Hilbert scheme.
We complete the pair to a coil as prescribed by Thm. \ref{thm:resolution-pos pos}.
This completion gives the coil
$$\{\OO(-n,-1),E_{\frac{-2-3(n-1)}{3},\frac{-2}{3}},\OO(-n+1,-1),\OO(-n,0) \}.$$
Next, we use the Beilinson spectral sequence to resolve the general ideal sheaf.
The spectral sequence gives the resolution
$$0  \to  \OO(-n,-1)^n  \to  \OO(-n+1,-1)^n \osum \OO(-n,0) \to  \mathcal{I}_z  \to  0.$$


The moving curves 
are pencils in the space $\Hom(\OO(-n,-1),\OO(-n+1,-1))$. 
The restriction this moving curve places on $D$ is that $b \geq 0$.
In particular, $B$ and $X_{n-1,0}$ are dual to this moving curve.

$B$ is known to be an effective divisor.
The ray corresponding to $X_{n-1,0}$ is spanned by the effective Brill-Noether divisor $D_V$ where $V=\OO(n-1,0)$ by Thm. \ref{thm:eff cone neq neq}.
By symmetry, it is clear that $B$ spans an extremal ray.
We have not yet shown that $X_{n-1,0}$ spans an extremal ray because we have only provided one moving curve dual to it.
The next two propositions will complete the proof that it spans an extremal ray by providing a second dual moving curve.
The first proposition provides the dual moving curve in the case that $n$ is even while the second proposition does so in the case that $n$ is odd.
\end{proof}

The next proposition provides another edge of the effective cone in the case that $n$ is even, i.e. $n=2k$.
This edge will share an extremal ray with the edge provided by the previous theorem.
It will provide the second dual moving curve we needed to complete the previous proposition in the case that $n$ is even.

\begin{prop}
The edge spanned by $X_{2k-1,0}$ and $X_{k-1,1}$ is an extremal edge of the effective cone of $\left( \QS \right)^{[2k]}$ for $k>0$.
Similarly, the edge spanned by $X_{0,2k-1}$ and $X_{1,k-1}$ is an extremal edge of the effective cone of $\left( \QS \right)^{[2k]}$ for $k>0$.
\end{prop}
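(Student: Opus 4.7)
The plan is to exhibit an extremal pair that links $X_{2k-1,0}$ and $X_{k-1,1}$ for the Hilbert scheme $(\QS)^{[2k]}$. The natural candidate is $(\OO(2k-1,0),\OO(k-1,1))$: a short Künneth calculation on $\QS=\PP^1\times\PP^1$ gives $\Ext^*(\OO(k-1,1),\OO(2k-1,0)) = H^*(\OO(k,-1)) = 0$, so this is an exceptional pair, and the identity $\chi(\OO(a,b))=(a+1)(b+1)$ yields
\[\chi(\OO(2k-1,0)\otimes \mathcal{I}_z) = 2k - 2k = 0 = \chi(\OO(k-1,1)\otimes \mathcal{I}_z).\]
Since both Euler characteristics vanish, we sit in case (4) of the classification at the end of Section \ref{sec:excpair}: this single controlling pair simultaneously produces $\ch(\OO(2k-1,0))$ and $\ch(\OO(k-1,1))$ as primary orthogonal Chern characters, corresponding under $\xi^\perp \cong \ns((\QS)^{[2k]})$ to the rays $X_{2k-1,0}$ and $X_{k-1,1}$.

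Next I would complete the pair to a coil via its minimally ranked right completion pair $(F_0,F_1)$ and apply the positive-type resolution theorem (Thm. \ref{thm:resolution-pos pos}) to obtain a resolution of the general $\mathcal{I}_z$ whose terms are all line bundles on $\QS$. By Prop. \ref{prop:brillnoether} this yields effective Brill-Noether divisors $D_{\OO(2k-1,0)}$ and $D_{\OO(k-1,1)}$ with classes $X_{2k-1,0}$ and $X_{k-1,1}$ respectively.

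To witness extremality of the spanned edge, I would construct a single moving curve $C \subset (\QS)^{[2k]}$ dual to both Brill-Noether divisors simultaneously, mimicking the pencil construction in the proof of Thm. \ref{thm:eff cone= =}: fix all but one morphism in the resolution and let $C$ be the sweep of a general pencil in the remaining projectivized $\Hom$-space. A Bertini argument (Prop. 2.6 of \cite{Hu:Hu16}) applied to the globally generated sheafified $\Hom$, combined with the standing codimension-two assumption on the strictly semistable locus, ensures $C$ is a genuine moving curve contained in the stable locus. Cohomological orthogonality of every member of the resulting family against both $\OO(2k-1,0)$ and $\OO(k-1,1)$ is built into the resolution by construction, so $C\cdot D_{\OO(2k-1,0)} = 0 = C\cdot D_{\OO(k-1,1)}$, and the existence of this single dual moving curve is precisely what is needed to conclude that the entire edge is extremal in the effective cone. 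The second statement then follows from the symmetry $(a,b)\leftrightarrow(b,a)$ of the cone about the ideal sheaf class.

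The main obstacle is the coil completion: one must check, uniformly in $k$, that the minimally ranked right completion of $(\OO(2k-1,0),\OO(k-1,1))$ produces a coil of line bundles satisfying the mutation-index and global-generation hypotheses of Def. \ref{def:pos extremal pair}, and that every multiplicity $m_i$ in the resulting resolution is positive so that at least one morphism can genuinely be varied in a pencil. This should reduce to a routine Künneth calculation in the spirit of the explicit $n=7$ computation of Subsection \ref{subsec: n equals 7}, with the $k=1$ case meriting separate attention since the two bundles then have equal $(1,1)$-slope and the resolution degenerates to the classical Koszul complex $0\to\OO(-1,-1)\to\OO(-1,0)\oplus\OO(0,-1)\to \mathcal{I}_z \to 0$.
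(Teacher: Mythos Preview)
Your approach differs from the paper's in a way that introduces a real gap. You take $(\OO(2k-1,0),\OO(k-1,1))$ --- the bundles whose Brill-Noether divisors span the edge --- as the extremal pair itself, invoking case~(4). The paper does not do this: it uses the controlling pair $(\OO(2k-2,0),\OO(k,1))$, for which $\chi(\OO(2k-2,0)\otimes\mathcal{I}_z)=-1$ and $\chi(\OO(k,1)\otimes\mathcal{I}_z)=2$. This lands in the mixed type (Thm.~\ref{thm:resolution-neg pos}) with $m_1=m_2=0$, yielding the clean resolution $0\to\OO(-2k,-2)\to\OO(-k,-1)^2\to\mathcal{I}_z\to 0$ and a birational map to a Kronecker space $Kr_N(1,2)$ that contracts both $D_{\OO(2k-1,0)}$ and $D_{\OO(k-1,1)}$ simultaneously. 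A second extremal pair $(\OO(k-2,1),\OO(k-1,1))$ then supplies an independent moving curve confirming that $X_{k-1,1}$ is an extremal ray.

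The obstacle you anticipate does not resolve in your favour. The minimally ranked right completion of your pair is $(\OO(k,1),\OO(2k,2))$, and the bundle $F_{-1}=L_{\OO(k,1)}\OO(2k,2)$ entering the positive-type coil has rank $2k+1$; the coil is \emph{not} a coil of line bundles for $k\geq 2$, contrary to what you hope. Condition~(5) of Def.~\ref{def:pos extremal pair} also fails, since $\sheafhom(F_0^*,E_{-\alpha})\cong\OO(1-k,1)$ is not globally generated once $k\geq 2$. More fundamentally, Def.~\ref{def:correspondingsurfaces} only defines the corresponding surface $Q_{\alpha,\xi}$ when $\chi(E_\alpha^*,U)\neq 0$, so a pair with both Euler characteristics zero does not fit into the controlling-pair machinery at all. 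Separating the controlling pair from the divisor bundles, as the paper does, is precisely what makes the argument uniform in $k$.
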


\begin{proof}
This is proved for $k=1$ and $k=2$  in \cite{BC:BC13}.
The proof now proceeds analogously as the previous proof.
Again, it is immediate from the symmetry of the effective cone in terms of $a$ and $b$ that the second statement immediately follows from the first statement.
We now prove the first statement.

The first step in proving this edge is an extremal ray is finding an extremal pair.
To find an extremal pair, we first have to find the two controlling exceptional bundles that will make up the pair.
The vector bundle whose Brill-Noether divisor will span the ray $X_{2k-1,0}$ is $\OO(2k-1,0)$.
The vector bundle whose Brill-Noether divisor will span the ray $X_{k-1,1}$ is $\OO(k-1,1)$.
To find our controlling exceptional bundles, we need to find exceptional bundles cohomologically orthogonal to $\OO(2k-1,0)$ and $\OO(k-1,1)$.
Then, we have that 
$$\chi(\OO(k,1),\OO(2k-1,0))= 0, \textnormal{ } \chi(\OO(2k-1,0),\OO(2k-2,0)) = 0, $$ 
$$\chi(\OO(k,1),\OO(k-1,1))= 0 \textnormal{, and } \chi(\OO(k-1,1),\OO(2k-2,0)) = 0.$$
Next, it is easy to see that $\OO(2k-2,0)$ and $\OO(k,1)$ are controlling exceptional bundles for the Hilbert scheme and that the pair $\{\OO(2k,0),\OO(k,1)\}$ is an extremal pair.

Once we have the extremal pair, we need to turn it into a resolution of the general object of the Hilbert scheme.
We complete the pair to a coil as prescribed by Thm. \ref{thm:resolution-neg pos}.
This completion gives the coil
$$\{\OO(-2k,-2),\OO(-2k+1,-2),\OO(-k-1,-1),\OO(-k,-1) \}.$$
Next, we use the Beilinson spectral sequence to resolve the general ideal sheaf.
The spectral sequence gives the resolution
$$0  \to  \OO(-2k,-2)  \to   \OO(-k,-1)^2  \to  \mathcal{I}_z  \to  0.$$

Using this resolution, the third step is again finding a map to a moduli space of Kronecker modules.
The Kronecker module in this resolution is $\OO(-2k,-2)  \to   \OO(-k,-1)^2$.
Then we get a map 
$$\pi: \left(\QS\right)^{[n]} \dashrightarrow Kr_{\hom(\OO(-2k,-2),\OO(-k,-1))}(1,2).$$  

Using this map, we can finally compute the desired part of the effective cone.
By a straightforward dimension count, we know that this map is birational.
Then the two effective Brill-Noether divisors $D_V$ and $D_{V'}$ where $V = \OO(2k-1,1)$ and $V' = \OO(k-1,1)$ are contracted by this map.
Next, any moving curve in the Kronecker moduli space is dual to these contracted divisors.
Thus, a pencil in the space $\Hom(\OO(-2k,-2),\OO(-k,-1))$ provides a dual moving curve showing that these divisors are on an edge of the effective cone.
Alternatively, we could show that this moving curve gives the restriction $kb \geq 2k-1-a$.
Coupled with the previous proposition, it is clear that $X_{2k-1,0}$ spans an extremal ray.

In order to show that $X_{k-1,1}$ is an extremal ray at the other end of the edge, we have to provide another extremal pair.
The extremal pair needed is $\{\OO(k-2,1), \OO(k-1,1)\}$
Then we get the coil
$$\{\OO(-k,-3),\OO(-k,-2),\OO(-k+1,-2),\OO(-k+1,-1) \}.$$
%
The spectral sequence gives the resolution
$$0  \to  \OO(-k,-3)^2 \osum \OO(-k,-2)^{k-3}  \to  \OO(-k+1,-2)^k  \to  \mathcal{I}_z  \to  0.$$

A moving curve 
is a pencil in the space $\Hom(\OO(-k,-2),\OO(-k+1,-2))$.
The restriction this moving curve places on $D$ is that $kb \geq 4k-3-3a $.
In particular, $X_{k-1,1}$ is dual to this moving curve which has a different slope than the other moving curve we constructed through this divisor, so we have shown that it is an extremal ray as promised.
\end{proof}

We now move on to the analogous proposition for odd $n$.
\begin{prop}
The edge spanned by $X_{2k,0}$ and $X_{\frac{2k(k-1)}{2k-1},\frac{2k}{2k-1}}$ is an extremal edge of the effective cone of $\left( \QS \right)^{[2k+1]}$ for $k>1$.
Similarly, the edge spanned by $X_{0,2k}$ and $X_{\frac{2k}{2k-1},\frac{2k(k-1)}{2k-1}}$ is an extremal edge of the effective cone of $\left( \QS \right)^{[2k+1]}$ for $k>1$.
\end{prop}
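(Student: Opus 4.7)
The plan is to follow the same four-step template used in the two preceding propositions: identify extremal pairs controlling each of the two rays, apply the appropriate generalized Beilinson spectral sequence to resolve the general ideal sheaf $\mathcal{I}_z \in (\QS)^{[2k+1]}$, read off any Kronecker module, and construct dual moving curves by varying maps in the resolution. As in those proofs, the second statement will follow from the first by the $H_1 \leftrightarrow H_2$ symmetry, so only the first requires treatment. The task reduces to producing an effective Brill-Noether divisor on $X_{\frac{2k(k-1)}{2k-1},\frac{2k}{2k-1}}$ together with two independent dual moving curves, one of which is simultaneously dual to $X_{2k,0}$ so as to witness the two rays as endpoints of a common face.

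First I would handle the edge-sharing side with the positive-type extremal pair $(E_\alpha,E_\beta) = (\OO(2k,0),\OO(k,1))$, for which $\chi(\OO(k,1)\otimes \mathcal{I}_z) = 1$ and $\chi(\OO(2k,0)\otimes \mathcal{I}_z) = 0$; since $\chi(E_{-\alpha},\mathcal{I}_z)=0$ the corresponding orthogonal ray is spanned by $E_\alpha$ itself, giving $X_{2k,0}$. Its minimally ranked right completion pair is $(F_{-1},F_0) = (\OO(k+1,1),\OO(2k+1,2))$, producing the coil $(\OO(-2k-1,-2),\OO(-k-1,-1),\OO(-k,-1),\OO(-2k,0))$; Theorem \ref{thm:resolution-pos pos} together with the rank, $c_1$, and $\ch_2$ equations then yields
$$0 \to \OO(-2k-1,-2) \to \OO(-k-1,-1) \osum \OO(-k,-1) \to \mathcal{I}_z \to 0,$$
recovering the $n=7$ resolution when $k=3$. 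A pencil in $\PP\Hom(\OO(-2k-1,-2),\OO(-k-1,-1))$ then sweeps out a moving curve on the Hilbert scheme imposing a linear restriction that annihilates both $X_{2k,0}$ and the candidate ray.

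For the ray $X_{\frac{2k(k-1)}{2k-1},\frac{2k}{2k-1}}$ itself I would use the mixed-type extremal pair $(E_\alpha,E_\beta) = (\OO(k-1,1),\OO(k,1))$ with Euler characteristic signs $-1$ and $+1$, and minimally ranked right completion pair $(F_{-1},F_0) = (\OO(k,2),\OO(k+1,2))$. The associated coil is $(\OO(-k-1,-3),\OO(-k-1,-2),\OO(-k,-2),\OO(-k,-1))$, and Theorem \ref{thm:resolution-neg pos} yields
$$0 \to \OO(-k-1,-3) \osum \OO(-k-1,-2)^{k-1} \to \OO(-k,-2)^k \osum \OO(-k,-1) \to \mathcal{I}_z \to 0.$$
The Kronecker subcomplex $\OO(-k-1,-2)^{k-1} \to \OO(-k,-2)^k$ uses $N = 2$ and dimension vector $(k-1,k)$; since the expected dimension of $Kr_2(k-1,k)$ equals $2(k-1)k - (k-1)^2 - k^2 + 1 = 0$, the Kronecker target is a single point, and Theorem \ref{thm:eff cone= =} then identifies the associated Brill-Noether divisor as $D_V$ for the rigid exceptional bundle $V$ of rank $2k-1$ and slope $\left(\frac{2k(k-1)}{2k-1},\frac{2k}{2k-1}\right)$, whose existence is furnished by Rudakov's classification.

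Two dual moving curves to $D_V$ are then produced by varying the remaining maps in this resolution: pencils in $\PP\Hom(\OO(-k-1,-3),\OO(-k,-2)^k)$ and in $\PP\Hom(\OO(-k-1,-2)^{k-1},\OO(-k,-1))$, yielding two numerically independent classes that, together with the curve from the previous paragraph, force the edge to be extremal. The hard part will be verifying that both controlling pairs genuinely satisfy every clause of Def. \ref{def:extremal pair} and Def. \ref{def:pos extremal pair} uniformly in $k>1$---in particular the global generation of the relevant $\sheafhom$ sheaves, the prioritariness of the intermediate sheaves arising in the Beilinson spectral sequence, and the matching of the $\Delta_i$-values with the signs of the relevant Euler characteristics. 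Each reduces to an explicit line-bundle cohomology computation on $\QS$, but the bookkeeping has to be carried through for general $k$, and exceptionality together with the precise Chern invariants of the rank-$(2k-1)$ orthogonal bundle $V$ must be extracted from Rudakov's classification rather than by direct low-rank inspection.
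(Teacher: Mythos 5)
Your overall plan is the paper's own: establish the edge with one resolution whose variation produces a curve dual to both rays, then certify the ray $X_{\frac{2k(k-1)}{2k-1},\frac{2k}{2k-1}}$ with a second resolution coming from the pair $(\OO(k-1,1),\OO(k,1))$. Your second part matches the paper's proof essentially verbatim: same pair, same coil $(\OO(-k-1,-3),\OO(-k-1,-2),\OO(-k,-2),\OO(-k,-1))$, same resolution $0\to\OO(-k-1,-3)\osum\OO(-k-1,-2)^{k-1}\to\OO(-k,-2)^k\osum\OO(-k,-1)\to\mathcal{I}_z\to0$, same observation that $Kr_2(k-1,k)$ has expected dimension $0$, and the same two families of moving curves. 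For the first part you take the positive-type pair $(\OO(2k,0),\OO(k,1))$ where the paper's proof of this proposition uses the mixed-type pair $(\OO(2k-1,0),\OO(k,1))$; both produce the same resolution $0\to\OO(-2k-1,-2)\to\OO(-k-1,-1)\osum\OO(-k,-1)\to\mathcal{I}_z\to0$, so that substitution is harmless.

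The genuine gap is in the linking moving curve, which is the one step whose whole purpose is to tie $X_{2k,0}$ and $X_{\frac{2k(k-1)}{2k-1},\frac{2k}{2k-1}}$ into a common face. You vary the component $\OO(-2k-1,-2)\to\OO(-k-1,-1)$ and assert the resulting restriction annihilates both rays; it does not. Setting up the family over the pencil, the twist lands on the summand receiving the varying map, so the induced curve $C$ satisfies $C\cdot\lambda_M(\zeta)=-\chi\left(\zeta\otimes\OO(-k-1,-1)\right)$ for $\zeta\in\xi^\perp$. For $V=\OO(2k,0)$ this vanishes, but for the exceptional bundle $V'=E_{\frac{2k(k-1)}{2k-1},\frac{2k}{2k-1}}$ of rank $2k-1$ one computes $\chi\left(V'\otimes\OO(-k-1,-1)\right)=-2k\neq0$; equivalently, the restriction your pencil imposes is $a+(k+1)b\geq 2kc$, which is strict at $X_{\frac{2k(k-1)}{2k-1},\frac{2k}{2k-1}}$. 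So your curve only certifies $X_{2k,0}$ and does not exhibit the edge. The curve that works is the pencil in $\Hom\left(\OO(-2k-1,-2),\OO(-k,-1)\right)$ (this is the one the paper's proof names): since $\chi\left(V'\otimes\OO(-k,-1)\right)=0=\chi\left(\OO(2k,0)\otimes\OO(-k,-1)\right)$, its restriction is $a+kb\geq 2kc$, which vanishes on both rays. With that correction your argument closes, provided you also note, as the paper does, that extremality of the endpoint $X_{2k,0}$ itself uses the earlier proposition giving the edge through $B$ and $X_{n-1,0}$, while extremality of the other endpoint follows from your two independent curves in the second resolution.
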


\begin{proof}
This is shown for $k=2$ in \cite{BC:BC13}.
Assume $k>2$. 
This proof proceeds with all of the same elements as the previous proof, but slightly altered notation due to $n$ being odd.
Due to this, we give a much briefer proof.

The first statement implies the second statement by the symmetry of the effective cone so we prove only the first statement.
Then it can be shown that the pair $\{\OO(2k-1,0),\OO(k,1)\}$ is an extremal pair.
Then we get the coil
$$\{\OO(-2k-1,-2),\OO(-2k,-2),\OO(-k-1,-1),\OO(-k,-1) \}.$$
The spectral sequence gives the resolution
$$0  \to  \OO(-2k-1,-2)  \to  \OO(-k-1,-1) \osum \OO(-k,-1)  \to  \mathcal{I}_z  \to  0.$$

A moving curve 
is a pencil in the space $\Hom(\OO(-2k-1,-2),\OO(-k,-1))$.
The restriction this moving curve places on $D$ is that $kb \geq 2k-a $.
In particular, $X_{2k,0}$ and $X_{\frac{2k(k-1)}{2k-1},\frac{2k}{2k-1}}$ are dual to this moving curve.

Then the two effective Brill-Noether divisors $D_V$ and $D_{V'}$ where $V$ is the exceptional bundle $\OO(2k,0)$ and $V'$ is the exceptional bundle $E_{\frac{2k(k-1)}{2k-1},\frac{2k}{2k-1}}$ are shown to be on an edge by this moving curve.
Coupled with the previous proposition, it is clear that $X_{2k-1,0}$ spans an extremal ray.

In order to show that $X_{\frac{2k(k-1)}{2k-1},\frac{2k}{2k-1}}$ is an extremal ray at the other end of the edge, we have to provide another extremal pair.
The extremal pair needed is $\{\OO(k-1,1), \OO(k,1)\}$
Then we get the coil
$$\{\OO(-k-1,-3),\OO(-k-1,-2),\OO(-k,-2),\OO(-k,-1) \}.$$
%
The spectral sequence gives the resolution
$$0  \to  \OO(-k-1,-3) \osum \OO(-k-1,-2)^{k-1}  \to  \OO(-k,-2)^k \osum \OO(-k,-1)  \to  \mathcal{I}_z  \to  0.$$
Then we get a map 
$$\pi: \left(\QS\right)^{[n]} \dashrightarrow Kr_{\hom(\OO(-k-1,-2),\OO(-k,-2))}(k-1,k).$$  

By a dimension count, we see that this Kronecker moduli space is zero dimensional so we disregard it.
The moving curves 
are pencils in the spaces $\Hom(\OO(-k-1,-3),\OO(-k,-2))$ and $\Hom(\OO(-k-1,-2),\OO(-k,-1))$.
The restrictions these moving curves place on $D$ are that $kb \geq 2k-a$ and $(k+1)b \geq 4k-3a$.
In particular, $X_{\frac{2k(k-1)}{2k-1},\frac{2k}{2k-1}}$ is dual to these moving curve which have different slopes, so we have shown that it is an extremal ray as promised.
\end{proof}

The final sequence we look at is $n =3k+1$.
We provide this sequence as an example of the large class of extremal rays that be found in more sporadic sequences.
\begin{prop}
The ray spanned by $X_{k-\frac{1}{2},2}$ is an extremal ray of the effective cone of $\left( \QS \right)^{[3k+1]}$ for $k>1$.
Similarly, the ray spanned by $X_{2,k-\frac{1}{2}}$ is an extremal ray of the effective cone of $\left( \QS \right)^{[3k+1]}$ for $k>1$.
\end{prop}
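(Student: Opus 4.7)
The plan is to follow the four-step algorithm used in the previous three propositions of this section. First, I would guess the controlling extremal pair $(E_\alpha, E_\beta) = (\OO(k-1, 2), \OO(k, 2))$, justified by symmetry with the corresponding mirror pair for $X_{2, k-1/2}$. Solving the linear system defining the triple intersection $Q_\xi \cap Q_{\alpha, \xi} \cap Q_{\beta, \xi}$ shows that the unique corresponding orthogonal point has slope $(k-1/2, 2)$ and discriminant $1/2$, so the primary orthogonal Chern character $\xi^+$ has rank $2$, $c_1 = (2k-1, 4)$, and $\ch_2 = 4k-3$; consequently any stable $V \in M(\xi^+)$ yields a Brill--Noether divisor $D_V$ whose class is proportional to $X_{k-1/2, 2}$ by Prop.~\ref{prop:brillnoether}.

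Second, I would check that we are in the mixed type case of Theorem~\ref{thm:resolution-neg pos} by computing $\chi(E_\alpha^*, \mathcal{I}_Z) = 3k - (3k+1) = -1 \leq 0$ and $\chi(E_\beta^*, \mathcal{I}_Z) = 3(k+1) - (3k+1) = 2 \geq 0$ for $k > 1$. The minimally ranked right completion pair of $(E_\alpha, E_\beta)$ is $(F_0, F_1) = (\OO(k-1, 3), \OO(k, 3))$, and $F_{-1} = \OO(k-2, 3)$ by a direct mutation calculation. The required extremality conditions of Definition~\ref{def:extremal pair} would be checked by routine K\"unneth and Serre-duality computations. Theorem~\ref{thm:resolution-neg pos} then gives a resolution of the general $\mathcal{I}_Z$ of the form
$$0 \to \OO(-k-1, -4)^{m_3} \osum \OO(-k+1, -3)^{m_2} \to \OO(-k+2, -3)^{m_1} \osum \OO(-k, -2)^{m_0} \to \mathcal{I}_Z \to 0,$$
with exponents $m_i$ read off from the Beilinson spectral sequence.

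Third, to show that $X_{k-1/2, 2}$ spans an extremal ray rather than merely lying on an edge, I would exhibit two linearly independent moving curves dual to $D_V$. Each comes from varying a pencil in one of the Hom spaces among the resolving bundles while fixing the other morphisms: a pencil in $\Hom(\OO(-k-1, -4), \OO(-k, -2)) \cong H^0(\OO(1, 2))$ produces one curve class, and a pencil in $\Hom(\OO(-k+1, -3), \OO(-k+2, -3)) \cong H^0(\OO(1, 0))$ produces another. Intersecting each with $H_1$, $H_2$, and $B$ checks that the two classes are distinct and both annihilate $D_V$. The symmetric ray $X_{2, k-1/2}$ then follows by interchanging the two rulings of $\QS$. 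The main obstacle will be correctly computing the exponents $m_i$ as functions of $k$, since some intermediate $\Ext$ groups in the Beilinson spectral sequence change sign with $k$ (for instance $\chi(\OO(k-2, 3), \mathcal{I}_Z) = k - 5$ is negative for $k \leq 4$ and positive for $k \geq 6$), so the formulas may require a few separate cases, together with verifying the prioritary and global generation conditions uniformly in $k$.
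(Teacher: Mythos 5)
Your opening moves match the paper: the extremal pair is indeed $\{\OO(k-1,2),\OO(k,2)\}$, the signs $\chi(\OO(-k+1,-2)\otimes\mathcal{I}_Z)=-1$ and $\chi(\OO(-k,-2)\otimes\mathcal{I}_Z)=2$ put you in the mixed-type case, and the orthogonal character $(2,(2k-1,4),4k-3)$ is correct. The first genuine gap is your completion: it is shifted by one step in the system $\{\OO(k-1+i,3)\}_{i\in\ZZ}$. The correct resolving coil is $\{\OO(-k-1,-4),\OO(-k,-3),\OO(-k+1,-3),\OO(-k,-2)\}$, i.e.\ $F_0=\OO(k,3)$ and $F_{-1}=\OO(k-1,3)$, and Thm.~\ref{thm:resolution-neg pos} gives
$$0\to\OO(-k-1,-4)\osum\OO(-k,-3)^{k-1}\to\OO(-k+1,-3)^{k-1}\osum\OO(-k,-2)^{2}\to\mathcal{I}_Z\to 0,$$
with exponents uniform in $k$ (the sign change you worry about in $\chi(\OO(k-2,3)\otimes\mathcal{I}_Z)=k-5$ is a symptom of the wrong coil, not a genuine case division). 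Your proposed resolution, with $\OO(-k+1,-3)$ in the kernel and $\OO(-k+2,-3)$ in the middle, cannot resolve the general $\mathcal{I}_Z$ for any $k>1$: matching $\ch(\mathcal{I}_Z)$ forces $(m_3,m_2,m_1,m_0)=(1,k-1,k-1,2)$, and twisting your sequence by $\OO(k-2,3)$ turns the kernel into $\OO(-3,-1)\osum\OO(-1,0)^{k-1}$, which has no cohomology, and the middle into $\OO^{\,k-1}\osum\OO(-2,1)^{2}$ with $h^0=k-1$; the long exact sequence would then give $h^0(\mathcal{I}_Z(k-2,3))=k-1$, whereas for general $Z$ it equals $\max(0,k-5)$, a contradiction.

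The second gap is in your dual curves. A pencil in $\Hom(F_0^*,F_{-1}^*)$ varies the Kronecker module itself, so the resulting curve maps non-constantly under the fibration $\pi:\left(\QS\right)^{[3k+1]}\dashrightarrow Kr_{\hom(\OO(-k,-3),\OO(-k+1,-3))}(k-1,k-1)$ (a positive-dimensional space here); since $D_V$ is precisely the jumping locus of $\Hom$ between the Kronecker modules of $\mathcal{I}_Z$ and of $V$, such a curve meets $D_V$ positively and is not dual to it. You need two curve classes that fix the Kronecker module and cover the fibers, e.g.\ pencils in $\Hom(\OO(-k-1,-4),\OO(-k+1,-3))$ and $\Hom(\OO(-k,-3),\OO(-k,-2))$ as in the paper, which impose the distinct restrictions $(1+k)b\geq 6k-4a$ and $kb\geq 4k-1-2a$, both vanishing on $X_{k-\frac{1}{2},2}$. (Your pencil in $\Hom(\OO(-k-1,-4),\OO(-k,-2))$ is a legitimate fiber direction, but one such curve only places $D_V$ on an edge.) Finally, you cannot invoke Prop.~\ref{prop:brillnoether} directly: its hypothesis is cohomological orthogonality of $V$ with the general $\mathcal{I}_Z$, which is the nontrivial point; it is established as in Thm.~\ref{thm:eff cone neq neq} by resolving $V$ in terms of the completion coil and pairing its Kronecker module with the one appearing in the correct resolution of $\mathcal{I}_Z$, an argument that cannot be run from your coil.
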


\begin{proof}
This proof again proceeds similarly to the previous proofs so we provide a concise version.
Again, the second statement follows from the first by symmetry.
The extremal controlling pair is $\{\OO(k-1,2),\OO(k,2)\}$
This completes to the coil $$\{\OO(-k-1,-4),\OO(-k,-3), \OO(-k+1,-3),\OO(-k,-2)\}.$$
Then the resolution that we get is 
$$0  \to  \OO(-k-1,-4)  \osum \OO(-k,-3)^{k-1} \to  \OO(-k+1,-3)^{k-1} \osum \OO(-k,-2)^2  \to  I_z  \to  0.$$
Then we get a map 
$$\pi: \left(\QS\right)^{[n]} \dashrightarrow Kr_{\hom(\OO(-k,-3),\OO(-k+1,-3))}(k-1,k-1),$$  
and $V$ has the resolution 
$$0 \to  \OO(k,1) \to E_{\frac{k-1}{3},\frac{7}{3}} \to V \to 0.$$
Next, the Kronecker modules are dual, so we have cohomological orthogonality.
This makes $D_V$ into a divisor.
By Prop. \ref{prop:brillnoether}, we know its class is $X_{k-\frac{1}{2},2}$.
We see that it is an extremal ray by looking at pencils in the spaces $\Hom(\OO(-k-1,-4),\OO(-k+1,-3))$ and $\Hom(\OO(-k,-3),\OO(-k,-2))$ which cover the fibers of the Kronecker fibration.
The restrictions these moving curve places on $D$ are $(1+k)b \geq 6k-4a $ and $kb \geq 4k-1-2a$.
Note, this implies they are distinct curve classes and $X_{k-\frac{1}{2},2}$ is dual to both moving curves.
Thus, the Brill-Noether divisor $D_V$ where $V$ is a bundle with Chern character $\left(2,(2k-1,4),4k-3 \right)$ spans an extremal ray of the effective cone.
\end{proof}

There are a couple more infinite families that we will mention but not prove.
Their proofs follow similar techniques. 
Working out their proofs is a good exercise to become comfortable with this type of computation.  

These families of rays on a edge require some notation.
\begin{definition}
The $\textbf{symmetric value}$ of the effective cone of $\left( \QS \right)^{[N]}$ is the value $a$ for which the ray spanned by $X_{a,a}$ is on the edge of the effective cone.
\end{definition}

Note $X_{a,a}$ may or may not be an extremal ray.
We now state the symmetric value for four infinite sequences of $n$.
\begin{prop}
The symmetric value of the effective cone of $\left( \QS \right)^{[n]}$ is:

I)   $\hspace{.105in}k-1 - \frac{1}{2k-2} \hspace{.49in}$   for $n = k^2-2$, $k>1$

II)  $\hspace{.06in}k-1 \hspace{.95in}$                    for $n = k^2-1 \textnormal{ or } k^2$, $k>1$

III) $k-1+\frac{1}{k+1} \hspace{.55in}$      for $n = k^2+1$, $k>1$

IV)  $k-\frac{1}{2}\hspace{.94in}$          for $n = k^2+k$, $k>0$
\end{prop}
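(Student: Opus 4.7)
The plan is to prove each case by the four-step algorithm applied above to $(\QS)^{[7]}$ and the previous infinite families: identify an extremal controlling pair for the general ideal sheaf, complete to a coil, extract the Beilinson resolution from Thm.~\ref{thm:specseq}, and read off the Brill-Noether divisors and dual moving curves from the Kronecker fibration of Section~\ref{sec: The Kron. Fib.}.

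The key first step is to identify, in each case, a candidate bundle (or mirror-image pair of bundles) $V$ of slope $(a,a)$ satisfying $\chi(V \otimes \mathcal{I}_z) = 0$, where $a$ is the claimed symmetric value. In Case II with $n = k^2$, the candidate is $V = \OO(k-1, k-1)$, for which $\chi(V \otimes \mathcal{I}_z) = k^2 - n = 0$ yields $D_V$ of class proportional to $X_{k-1,k-1}$ directly from Prop.~\ref{prop:brillnoether}. In Case III the candidate is a symmetric bundle of slope $(k^2/(k+1), k^2/(k+1))$, either the rank $k+1$ exceptional bundle or a higher-rank stable bundle lying on $Q_\xi$. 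For Cases I, II with $n = k^2 - 1$, and IV, the symmetric ray lies in the interior of an edge between two mirror-image extremal rays, so I would take a pair $V_1, V_2$ related by swapping the two rulings; concrete candidates are $V_1 = \OO(k, k-2)$, $V_2 = \OO(k-2, k)$ for Case II (both give $\chi(V_i \otimes \mathcal{I}_z) = (k+1)(k-1) - (k^2-1) = 0$), $V_1 = \OO(k, k-1)$, $V_2 = \OO(k-1, k)$ for Case IV (both give $\chi(V_i \otimes \mathcal{I}_z) = k(k+1) - n = 0$), and a mirror pair of rank $k-1$ exceptional bundles extracted from Rudakov's classification for Case I. By Prop.~\ref{prop:brillnoether}, each $D_{V_i}$ is effective once cohomological non-specialty is verified, and the midpoint of the segment joining the classes of $D_{V_1}$ and $D_{V_2}$ is precisely $X_{a,a}$.

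Dual moving curves are produced from the Beilinson resolution of the general $\mathcal{I}_z$ associated to the extremal pair extending $V$ (or each of $V_1, V_2$). Completing the pair to a coil via Rudakov and applying Thm.~\ref{thm:resolution-neg pos} or Thm.~\ref{thm:resolution-pos pos} gives a resolution whose variation of maps, together with variation of the Kronecker module whenever the Kronecker moduli space is positive-dimensional, supplies the required moving curves just as in Sec.~\ref{sec: primary extremal rays of the effective cone}.

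The hardest step will be locating the correct rank $k-1$ exceptional bundles in Case I from Rudakov's classification and then verifying, uniformly in $k$, the numerical conditions of Def.~\ref{def:extremal pair} (or its negative/positive type analogues) for each candidate pair; in particular the global generation of the relevant $\sheafhom$'s and the prioritary condition. Once the pairs are identified these checks reduce to a finite-step arithmetic argument, but a careful case-by-case analysis is unavoidable.
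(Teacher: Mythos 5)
First, a point of comparison: the paper never proves this proposition --- it is stated after the remark that these families ``we will mention but not prove,'' with the proofs left as an exercise in the same technique --- so there is no written argument to measure you against. Your outline does follow the paper's four-step template, and your candidates in Cases II and IV are correct and consistent with the table for $n\le 16$: $V=\OO(k-1,k-1)$ for $n=k^2$, the mirror pairs $\OO(k,k-2),\OO(k-2,k)$ for $n=k^2-1$ and $\OO(k,k-1),\OO(k-1,k)$ for $n=k^2+k$. The underlying mechanism is also sound: if both mirror Brill--Noether divisors are effective and a single moving curve (coming from varying a map in one Beilinson resolution) is dual to both, then the symmetric class $X_{a,a}$, being half their sum, is effective and pairs to zero with a moving curve, hence lies on the boundary, which pins down the symmetric value.

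However, two of your concrete identifications fail, and they are exactly where the difficulty lies. In Case I the mirror bundles are \emph{not} exceptional in general: for $k=3$ ($n=7$) the paper's own computation exhibits the edge-end bundle $F_{2,\frac{3}{2}}$ with Chern character $(2,(4,3),5)$, which has $\Delta=\frac{1}{2}$, whereas a rank-two exceptional bundle has $\Delta=\frac{3}{8}$; for $k=4$ ($n=14$) the relevant bundle is the exceptional $E_{\frac{10}{3},\frac{7}{3}}$ of rank $3$. So ``a mirror pair of rank $k-1$ exceptional bundles from Rudakov's classification'' is not a correct uniform description, and identifying the right characters (exceptional versus characters on the $\Delta=\frac{1}{2}$ hyperbola) for all $k$ is the real work. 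In Case III your primary candidate often does not exist: a symmetric exceptional bundle of slope $\left(\frac{k^2}{k+1},\frac{k^2}{k+1}\right)$ would, for $k=3$, have $\mu_{1,1}=\frac{9}{2}$, forcing rank $2$ and the non-integral $c_1=\left(\frac{9}{2},\frac{9}{2}\right)$; indeed for $n=10$ the symmetric ray $X_{\frac{9}{4},\frac{9}{4}}$ lies in the interior of the edge spanned by $X_{\frac{5}{2},2}$ and $X_{2,\frac{5}{2}}$, so there too a mirror-pair argument is required rather than a single symmetric bundle, and your fallback (``a higher-rank stable bundle lying on $Q_\xi$'' that is cohomologically orthogonal) is precisely an unproved interpolation claim. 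Finally, all of the substantive verification --- choosing the extremal pairs, checking the conditions of Def.~\ref{def:extremal pair} and its analogues uniformly in $k$, writing the resolutions, and confirming that the resulting moving-curve inequalities meet the line $a=b$ at the claimed values --- is deferred, so as it stands this is a plausible plan in the spirit of the paper rather than a proof.
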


\subsection{Completing the Table}
Finally, using our methods, we will give brief proofs of each of the five corners in the table at the beginning of this section that do not follow from our general constructions so far.
We will only state the propositions and proofs for one of each pair of symmetric extremal rays.

\begin{prop}
$X_{4,\frac{4}{3}}$ is an extremal ray of $\left( \QS\right)^{[11]}$.
\end{prop}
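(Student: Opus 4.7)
The plan is to apply the paper's algorithmic machinery: identify an extremal controlling pair, construct the resolving coil, apply Theorem \ref{thm:resolution-neg pos}, build the Kronecker fibration, and exhibit two linearly independent dual moving curves.

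First, I would take $(E_\alpha, E_\beta) = (\OO(4,1), \OO(3,2))$. Since $\chi(\OO(-4,-1), \mathcal{I}_z) = (5)(2) - 11 = -1 < 0$ and $\chi(\OO(-3,-2), \mathcal{I}_z) = (4)(3) - 11 = 1 > 0$, we are in the mixed-type case of Section \ref{subsec:spec seq neg and pos}; intersecting $Q_\xi$, $Q_{\OO(-6,-3)}$, and $Q_{\OO(-3,-2)}$ produces the corresponding orthogonal point $\left((4, \tfrac{4}{3}), \tfrac{2}{3}\right)$, which scales to the primary orthogonal Chern character $(3, (12, 4), 14)$ of class $X_{4, \frac{4}{3}}$. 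The minimally ranked right completion pair is $(F_0, F_1) = (\OO(4, 2), \OO(5, 3))$, and $F_{-1} = L_{\OO(4,2)} \OO(5,3) = E_{\frac{11}{3}, \frac{5}{3}}$ is the rank-$3$ exceptional bundle fitting in $0 \to F_{-1} \to \OO(4, 2)^4 \to \OO(5, 3) \to 0$ (its discriminant $\tfrac{4}{9}$ and the arithmetic $11 \cdot 5 \equiv 1 \pmod 3$ both confirm existence).

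Theorem \ref{thm:resolution-neg pos} applied to the resolving coil $\left(\OO(-6,-3),\, \OO(-4,-2),\, E^*_{\frac{11}{3},\frac{5}{3}},\, \OO(-3,-2)\right)$, together with Chern-class bookkeeping forcing $(m_3, m_2, m_1, m_0) = (1, 2, 1, 1)$, produces the resolution
$$0 \to \OO(-6,-3) \osum \OO(-4,-2)^2 \to E^*_{\frac{11}{3},\frac{5}{3}} \osum \OO(-3,-2) \to \mathcal{I}_z \to 0$$
of the general $\mathcal{I}_z$. The Kronecker module $\OO(-4,-2)^2 \to E^*_{\frac{11}{3},\frac{5}{3}}$ has $N = \hom(\OO(-4,-2), E^*_{\frac{11}{3},\frac{5}{3}}) = h^0(E_{\frac{1}{3}, \frac{1}{3}}) = 4$ (computed from the presentation $0 \to \OO(-1,-1) \to \OO^4 \to E_{\frac{1}{3}, \frac{1}{3}} \to 0$), so by Proposition \ref{prop:Kron large fib} we obtain a map $\pi : (\QS)^{[11]} \dashrightarrow Kr_4(2, 1)$ with $4$-dimensional target and $18$-dimensional fibers.

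Theorem \ref{thm:eff cone neq neq} then produces the effective Brill-Noether divisor $D_V$ of class $X_{4, \frac{4}{3}}$ for $V$ of Chern character $(3, (12, 4), 14)$; cohomological orthogonality of $V$ to the general $\mathcal{I}_z$ comes from the duality of Kronecker modules in that theorem's proof. Two linearly independent dual moving curves, obtained by fixing the Kronecker module and varying pencils in $\Hom(\OO(-6,-3), \OO(-3,-2))$ and $\Hom(\OO(-4,-2)^2, \OO(-3,-2))$, give distinct linear restrictions on $D = aH_1 + bH_2 - \tfrac{1}{2}B$, both tight at $(a, b) = (4, \tfrac{4}{3})$, thereby establishing $X_{4, \frac{4}{3}}$ as an extremal ray. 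The main obstacle is verifying the technical conditions of Definition \ref{def:extremal pair} for the orthogonal pair $(\OO(4,1), \OO(3,2))$---especially determining the spectral-sequence indices $\Delta_1, \Delta_2$ and confirming global generation of the relevant sheaf homomorphism bundles---but these reduce to routine cohomology computations once $F_{-1}$ is identified as $E_{\frac{11}{3}, \frac{5}{3}}$.
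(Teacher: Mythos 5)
Your setup reproduces the paper's proof almost verbatim: the same controlling pair (up to dualization), the same resolving coil $\left(\OO(-6,-3),\OO(-4,-2),E^*_{\frac{11}{3},\frac{5}{3}},\OO(-3,-2)\right)$, the same resolution with exponents $(1,2,1,1)$, the same Kronecker fibration to a $4$-dimensional space, and the same Brill--Noether divisor $D_V$ with $\ch V=(3,(12,4),14)$. The gap is in the last step, where extremality has to come from \emph{two independent} dual moving curve classes. You assert, without computing, that the pencils in $\Hom(\OO(-6,-3),\OO(-3,-2))$ and in $\Hom(\OO(-4,-2)^2,\OO(-3,-2))$ impose distinct restrictions, both tight at $(4,\tfrac43)$. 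They do not: both of your pencils only vary components landing in the summand $\OO(-3,-2)$, and when such a pencil is completed to a flat family over $\PP^1$ (rescale the $\OO(-3,-2)$ row; the limit presentation with the complementary component set to zero still has torsion-free cokernel for general data), the normalized K-theoretic class controlling the curve is $[\OO(-3,-2)]-\xi$ in either case. The resulting numerical condition on $D=aH_1+bH_2-\tfrac12 B$ is $2a+3b\geq 12$ for \emph{both} pencils, so your two curves have proportional classes. They show only that $D_V$ lies on the face cut out by $2a+3b=12$ (the edge through $X_{4,\frac43}$ and $X_{\frac{12}{5},\frac{12}{5}}$), not that $X_{4,\frac43}$ spans an extremal ray.

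The paper's proof avoids this by taking the second pencil to vary the component $\OO(-6,-3)\to E^*_{\frac{11}{3},\frac{5}{3}}$, i.e.\ the map from $E_{-\alpha}(K)$ into the Kronecker part $K$ (this also fixes the Kronecker module, so it still lies in a fiber of $\pi$). Its class is governed instead by $[\OO(-6,-3)]-\xi$, giving the independent restriction $3a+6b\geq 20$, which is tight at $X_{4,\frac43}$ and at the other neighbor $X_{\frac{40}{9},\frac{10}{9}}$. Pairing that curve with your first one (restriction $2a+3b\geq 12$) is what pins down $X_{4,\frac43}$ as a corner of the effective cone. So to repair your argument, replace the $\Hom(\OO(-4,-2)^2,\OO(-3,-2))$ pencil by a pencil of maps $\OO(-6,-3)\to E^*_{\frac{11}{3},\frac{5}{3}}$ (or, equivalently, a general pencil of the full column of maps out of $\OO(-6,-3)$), and actually carry out the two intersection-number computations rather than asserting distinctness.
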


\begin{proof}
Let $Z \in \left( \QS\right)^{[11]}$ be general.
The relevant extremal pair is $\{\OO(-4,-1),\OO(-3,-2)\}$.
The resolving coil then will be $$\{\OO(-6,-3),\OO(-4,-2),E_{\frac{-11}{3},\frac{-5}{3}},\OO(-3,-2)\}$$ since 
$$\chi(\OO(-4,-1),\mathcal{I}_Z) =  1*1((1+0+4)(1+0+1)-0-11) = -1 \textnormal{, and}$$
$$\chi(\OO(-3,-2),\mathcal{I}_Z) = 1*1((1+0+3)(1+0+2)-0-11) = 1.$$
Then the resolutions we get from the generalized Beilinson spectral sequence are 
$$0 \to \OO(-6,-3) \osum \OO(-4,-2)^2 \to E_{\frac{-11}{3},\frac{-5}{3}} \osum \OO(-3,-2) \to \mathcal{I}_Z \to 0 \textnormal{ and }$$
$$0 \to \OO(2,2) \to V \to \OO(5,1)^2 \to 0.$$
Next, the Kronecker map is
$$\pi: \left(\QS\right)^{[11]}  \dashrightarrow Kr_{\hom\left(E_{\frac{11}{3},\frac{5}{3}},\OO(4,2)\right)}(1,2).$$ 
Note that the dimension of this Kronecker moduli space is $4*2*1-2^2-1^2+1 = 4$

Then, the Brill-Noether divisor is $D_V$ where $V$ is a bundle that has Chern character $(3,(12,4),14)$.
The two types of moving curves covering each fiber come from pencils of maps $\OO(-6,-3) \to K$ 
and $\OO(-6,-3) \to \OO(-3,-2)$.
The restrictions these two moving curves place on $D$ are that $3b\geq 12-2a$ and $6b\geq 20-3a$.
In particular, $X_{4,\frac{4}{3}}$ is dual to both moving curves, $X_{\frac{40}{9},\frac{10}{9}}$ is dual to the first moving curve, and $X_{\frac{12}{5},\frac{12}{5}}$ is dual to the second moving curve.
\end{proof}

\begin{prop}
$X_{\frac{12}{5},\frac{12}{5}}$ is an extremal ray of $\left( \QS\right)^{[11]}$.
\end{prop}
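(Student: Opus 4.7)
The plan closely mirrors the proof of the preceding proposition for $X_{4,\frac{4}{3}}$ and exploits the symmetry of $X_{\frac{12}{5},\frac{12}{5}}$. First I would identify the extremal pair $\{\OO(-3,-2),\OO(-2,-3)\}$, which bridges the $X_{4,\frac{4}{3}}$ pair $\{\OO(-4,-1),\OO(-3,-2)\}$ from the preceding proposition and its symmetric image $\{\OO(-2,-3),\OO(-1,-4)\}$ for $X_{\frac{4}{3},4}$. A direct Riemann--Roch computation gives $\chi(\OO(-3,-2),\mathcal{I}_Z) = \chi(\OO(-2,-3),\mathcal{I}_Z) = P(3,2)-11 = 1 > 0$, placing us in the positive type case of Thm.~\ref{thm:resolution-pos pos}.

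To complete $(\OO(3,2),\OO(2,3))$ to a coil, I would use the helix structure on $\QS$: shifting the standard coil $(\OO,\OO(1,0),\OO(0,1),\OO(1,1))$ by $-K=(2,2)$ exhibits $(\OO(3,2),\OO(2,3),\OO(3,3),\OO(4,4))$ as a coil. The minimally ranked right completion pair is therefore $(F_0,F_1)=(\OO(3,3),\OO(4,4))$, and the left mutation $F_{-1}=L_{\OO(3,3)}\OO(4,4)$ fits into the regular mutation sequence $0\to F_{-1}\to \OO(3,3)^4\to \OO(4,4)\to 0$, giving $F_{-1}=E_{\frac{8}{3},\frac{8}{3}}$. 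With the resolving coil $(\OO(-3,-3), E_{\frac{-8}{3},\frac{-8}{3}},\OO(-2,-3),\OO(-3,-2))$, the generalized Beilinson spectral sequence together with a Chern character count (which forces $m_0=m_1=1$, $m_2=2$, $m_3=7$) produces
$$0 \to \OO(-3,-3)^7 \to E_{\frac{-8}{3},\frac{-8}{3}}^{2}\osum \OO(-2,-3)\osum \OO(-3,-2) \to \mathcal{I}_Z \to 0.$$

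The Kronecker subcomplex $\OO(-3,-3)^7 \to E_{\frac{-8}{3},\frac{-8}{3}}^2$ has $N=\hom(\OO(-3,-3),E_{\frac{-8}{3},\frac{-8}{3}})=\chi(\OO(-3,-3),E_{\frac{-8}{3},\frac{-8}{3}})=4$, yielding a dominant rational map $\pi:\left(\QS\right)^{[11]}\dashrightarrow Kr_4(7,2)$ onto a target of dimension $4\cdot 7\cdot 2-49-4+1=4$, so $\pi$ has positive-dimensional fibers. By Prop.~\ref{prop:brillnoether}, the Brill--Noether divisor $D_V$ associated to a general $V$ with $\ch(V)=(5,(12,12),26)$ has class $X_{\frac{12}{5},\frac{12}{5}}$. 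Cohomological orthogonality of $V$ with the general ideal sheaf follows by applying Thm.~6.1 of \cite{CHW:CHW14} to the Kronecker modules extracted from $V$'s symmetric resolution (whose only non-orthogonal summands are $\OO(-3,-3)$ and $E_{\frac{-8}{3},\frac{-8}{3}}$), exactly as in the proof of Thm.~\ref{thm:eff cone neq neq}.

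Finally, to establish extremality I would exhibit two distinct dual moving curves covering the fibers of $\pi$: a pencil in $\Hom(\OO(-3,-3)^7,\OO(-3,-2))$ (which shares its restriction on $D=aH_1+bH_2-c\tfrac{B}{2}$ with the moving curve used for $X_{4,\frac{4}{3}}$) and a pencil in $\Hom(\OO(-3,-3)^7,\OO(-2,-3))$ (its symmetric image, linking to $X_{\frac{4}{3},4}$). These two families yield symmetric inequalities on $D$, both tight at $X_{\frac{12}{5},\frac{12}{5}}$ but with distinct slopes, forcing the ray to be extremal. The main obstacle is verifying that $\{\OO(-3,-2),\OO(-2,-3)\}$ satisfies every condition of Def.~\ref{def:pos extremal pair}, especially the global generation of the relevant $\sheafhom$ sheaves and the prioritary condition on general extensions; fortunately, the symmetry of the pair reduces this to bookkeeping parallel to the analogous verifications in the preceding proposition.
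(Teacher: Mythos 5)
Your argument is correct in substance but takes a genuinely different route from the paper's proof. The paper resolves the general $\mathcal{I}_Z$ using the all--line-bundle coil $\{\OO(-4,-4),\OO(-3,-3),\OO(-2,-3),\OO(-3,-2)\}$, obtaining $0 \to \OO(-4,-4)^2 \to \OO(-3,-3)\osum\OO(-2,-3)\osum\OO(-3,-2)\to\mathcal{I}_Z\to 0$; the associated Kronecker space is zero-dimensional and is discarded, and the two dual moving curves are produced directly as pencils of maps into the $\OO(-2,-3)$ and $\OO(-3,-2)$ summands, with explicit inequalities linking $X_{\frac{12}{5},\frac{12}{5}}$ to $X_{4,\frac{4}{3}}$ and $X_{\frac{4}{3},4}$; the orthogonal bundle $V$ with $\ch(V)=(5,(12,12),26)$ is exhibited via $0\to E_{\frac{7}{3},\frac{7}{3}}^2\to E_{\frac{26}{11},\frac{26}{11}}\to V\to 0$. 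You instead follow the framework of Section \ref{sec: Beilinson Spec seq} to the letter: taking the minimally ranked completion pair $(\OO(3,3),\OO(4,4))$ and its left mutation $E_{\frac{8}{3},\frac{8}{3}}$ gives the coil $(\OO(-3,-3),E_{-\frac{8}{3},-\frac{8}{3}},\OO(-2,-3),\OO(-3,-2))$ and the longer resolution with exponents $7,2,1,1$, hence a four-dimensional Kronecker target $Kr_4(7,2)$ with positive-dimensional fibers covered by your two pencils. Both versions check out numerically (your exponents are exactly $h^1(\mathcal{I}_Z(1,1))=7$, $h^1(\mathcal{I}_Z(2,2))=2$, $h^0(\mathcal{I}_Z(3,2))=h^0(\mathcal{I}_Z(2,3))=1$, and the Chern characters and the $22$-dimensional parameter counts agree), and both end with the same divisor class and the same two-dual-moving-curve extremality argument; what the paper's coil buys is a much leaner resolution involving only line bundles and no need to verify semistability of the $(7,2)$ Kronecker modules or dominance of a nontrivial fibration, while your version buys a direct instance of the positive-dimensional-fiber case of Theorem \ref{thm:eff cone neq neq} and makes the role of the rank-three mutation $E_{\frac{8}{3},\frac{8}{3}}$ explicit.

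Two small cautions. First, a ``Chern character count'' does not by itself force $m_0,\dots,m_3$; the exponents are the cohomology groups appearing in the generalized Beilinson spectral sequence, and it is the vanishings guaranteed by the conditions of Definition \ref{def:pos extremal pair} (which do hold here, e.g. $h^0(\mathcal{I}_Z(1,1))=h^2(\mathcal{I}_Z(1,1))=0$ and $h^0(\mathcal{I}_Z(2,2))=h^2(\mathcal{I}_Z(2,2))=0$ for general $Z$) that pin them down; you flag this verification but should not phrase it as a formal count. Second, your helix description of the coil is slightly off (twisting the standard coil by $(2,2)$ gives $(\OO(2,2),\OO(3,2),\OO(2,3),\OO(3,3))$, and one further mutation produces $(\OO(3,2),\OO(2,3),\OO(3,3),\OO(4,4))$), and the cohomological orthogonality of $V$ together with the distinctness of the two curve classes are asserted via symmetry rather than computed; these are genuine but routine gaps, comparable in size to what the paper itself leaves to its general theorems.
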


\begin{proof}
Let $Z \in \left( \QS\right)^{[11]}$ be general.
The relevant extremal pair is $\{\OO(-2,-3),\OO(-3,-2)\}$.
The resolving coil then will be $$\{\OO(-4,-4),\OO(-3,-3),\OO(-2,-3),\OO(-3,-2)\}$$ since 
$$\chi(\OO(-2,-3),\mathcal{I}_Z) =  1*1((1+0+2)(1+0+3)-0-11) = 1 \textnormal{, and}$$
$$\chi(\OO(-3,-2),\mathcal{I}_Z) = 1*1((1+0+3)(1+0+2)-0-11) = 1.$$
Then the resolutions we get from the generalized Beilinson spectral sequence are
$$0 \to \OO(-4,-4)^2 \to \OO(-3,-3) \osum \OO(-2,-3) \osum \OO(-3,-2) \to \mathcal{I}_Z \to 0 \textnormal{ and }$$
$$0 \to E_{\frac{7}{3},\frac{7}{3}}^2 \to E_{\frac{26}{11},\frac{26}{11}} \to V \to 0.$$
Next, the Kronecker map is
$$\pi: \left( \QS\right)^{[11]} \dashrightarrow Kr_{\hom\left(\OO(3,3),\OO(4,4)\right)}(1,2).$$ 
Note that the dimension of this Kronecker moduli space is $2*2*1-2^2-1^2+1 = 0$ so the map tells us nothing.

Then, the Brill-Noether divisor is $D_V$ where $V$ is a bundle that has Chern character $(5,(12,12),26)$.
There are two types of moving curves coming from pencils of maps $K \to \OO(-2,-3)$ and $K \to \OO(-3,-2)$.
The restrictions these two moving curves place on $D$ are that $3b\geq 20-6a$ and $6b\geq 20-3a$.
In particular, $X_{\frac{12}{5},\frac{12}{5}}$ is dual to both moving curves, $X_{4,\frac{4}{3}}$ is dual to the first moving curve, and $X_{\frac{4}{3},4}$ is dual to the second moving curve.
\end{proof}

\begin{prop}
$X_{\frac{9}{2},\frac{3}{2}}$ is an extremal ray of $\left( \QS\right)^{[13]}$.
\end{prop}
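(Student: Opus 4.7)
The plan is to replicate, essentially verbatim, the four-step structure of the preceding proof for $X_{4,\frac{4}{3}}$ on $\left(\QS\right)^{[11]}$. Let $Z \in \left(\QS\right)^{[13]}$ be general with $\xi = \ch(\mathcal{I}_Z) = (1,(0,0),-13)$. I take as extremal pair $\{\OO(-5,-1),\OO(-4,-2)\}$: Riemann--Roch gives $\chi(\OO(-5,-1),\mathcal{I}_Z)=P(5,1)-13=-1$ and $\chi(\OO(-4,-2),\mathcal{I}_Z)=P(4,2)-13=2$, placing us in the mixed regime of Subsec.~\ref{subsec:spec seq neg and pos}, and the three surfaces $Q_{\OO(-7,-3)}\colon (x-6)(y-2)=\Delta$, $Q_{\OO(-4,-2)}\colon (x-3)(y-1)=\Delta$, and $Q_\xi\colon (x+1)(y+1)=\Delta+13$ meet exactly at $(\mu^+,\Delta^+)=(\frac{9}{2},\frac{3}{2},\frac{3}{4})$. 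This yields the primary orthogonal Chern character $\xi^+=(2,(9,3),12)$.

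The minimally-ranked right completion pair of $(\OO(5,1),\OO(4,2))$ is $(F_0,F_1)=(\OO(5,2),\OO(6,3))$, which extends it to the coil $(\OO(5,1),\OO(4,2),\OO(5,2),\OO(6,3))$ on $\QS$; a routine Kunneth check shows all the required $\Ext^*$ vanishings. The left mutation $F_{-1}=L_{F_0}F_1$ is the rank-three exceptional $E_{\frac{14}{3},\frac{5}{3}}$ defined by $0\to F_{-1}\to \OO(5,2)^4\to \OO(6,3)\to 0$. Applying Thm.~\ref{thm:resolution-neg pos} to the resolving coil $(\OO(-7,-3),\OO(-5,-2),E_{\frac{-14}{3},\frac{-5}{3}},\OO(-4,-2))$ and matching Chern characters to $\xi$ yields the resolution
\[
0\to \OO(-7,-3)\oplus \OO(-5,-2)^{3}\to E_{\frac{-14}{3},\frac{-5}{3}}\oplus \OO(-4,-2)^{2}\to \mathcal{I}_Z\to 0.
\]
The Kronecker sub-complex $W\colon \OO(-5,-2)^{3}\to E_{\frac{-14}{3},\frac{-5}{3}}$ has $N=\hom(\OO(-5,-2),E_{\frac{-14}{3},\frac{-5}{3}})=4$ and produces a rational map $\pi\colon \left(\QS\right)^{[13]}\dashrightarrow Kr_4(3,1)$, whose target has expected dimension $4\cdot 3-9-1+1=3>0$, so $\pi$ has positive-dimensional fibres.

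For a general $V\in M(\xi^+)$, Riemann--Roch gives $\chi(V^*,\mathcal{I}_Z)=2(\tfrac{55}{4}-\tfrac{3}{4}-13)=0$, and cohomological orthogonality to the general $\mathcal{I}_Z$ reduces, exactly as in the proof of Thm.~\ref{thm:eff cone neq neq}, to the vanishing of an internal $\Hom$ between the Kronecker modules in the resolutions of $U$ and of $V$. Prop.~\ref{prop:brillnoether} then gives the effective Brill--Noether divisor $D_V$, whose class is $X_{\frac{9}{2},\frac{3}{2}}$. Following the $X_{4,\frac{4}{3}}$ template, two linearly independent dual moving curves arise from pencils in $\Hom(\OO(-7,-3),E_{\frac{-14}{3},\frac{-5}{3}})$ and $\Hom(\OO(-7,-3),\OO(-4,-2))$; both sit inside fibres of $\pi$ and so are dual to $D_V$, with one linking $X_{\frac{9}{2},\frac{3}{2}}$ to $X_{\frac{60}{11},\frac{12}{11}}$ and the other linking it to $X_{\frac{7}{2},2}$, forcing $D_V$ to span an extremal ray.

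The main obstacle will be the bookkeeping required to verify the numerical hypotheses of Def.~\ref{def:extremal pair} (the signs of the relevant Euler characteristics, the correct $\Delta_i$ values in the right-mutated coil, and the global generation of the four sheaf-$\sheafhom$s in item (5) of that definition). These are routine cohomology computations involving line bundles and the single rank-three exceptional $E_{\frac{14}{3},\frac{5}{3}}$, but they are the only places where the argument requires genuine work beyond the $X_{4,\frac{4}{3}}$ template.
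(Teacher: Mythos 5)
Your proposal follows the paper's own proof essentially step for step: the same extremal pair $\{\OO(-5,-1),\OO(-4,-2)\}$ with the same Euler characteristics, the same resolving coil $\{\OO(-7,-3),\OO(-5,-2),E_{\frac{-14}{3},\frac{-5}{3}},\OO(-4,-2)\}$ and resolution of $\mathcal{I}_Z$, the same Kronecker fibration (the paper writes it as $Kr_{\hom(E_{\frac{14}{3},\frac{5}{3}},\OO(5,2))}(1,3)$ of dimension $3$), the same Brill--Noether divisor $D_V$ with $\ch(V)=(2,(9,3),12)$, and the same two families of moving curves obtained by varying the components of the map out of $\OO(-7,-3)$, linking $X_{\frac{9}{2},\frac{3}{2}}$ to $X_{\frac{7}{2},2}$ and $X_{\frac{60}{11},\frac{12}{11}}$. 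The only items the paper makes explicit that you leave implicit are the resolution $0\to\OO(6,1)\to V\to\OO(3,2)\to 0$ and the numerical restrictions $4b\geq 15-2a$, $7b\geq 24-3a$ imposed by the two curves, which are routine.
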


\begin{proof}
Let $Z \in \left( \QS\right)^{[13]}$ be general.
The relevant extremal pair is $\{\OO(-5,-1),\OO(-4,-2)\}$.
The resolving coil then will be $$\{\OO(-7,-3),\OO(-5,-2),E_{\frac{-14}{3},\frac{-5}{3}},\OO(-4,-2)\}$$ since 
$$\chi(\OO(-5,-1),\mathcal{I}_Z) =  1*1((1+0+5)(1+0+1)-0-13) = -1 \textnormal{, and}$$
$$\chi(\OO(-4,-2),\mathcal{I}_Z) = 1*1((1+0+4)(1+0+2)-0-13) = 2.$$
Then the resolutions we get from the generalized Beilinson spectral sequence are
$$0 \to \OO(-7,-3) \osum \OO(-5,-2)^3 \to E_{\frac{-14}{3},\frac{-5}{3}} \osum \OO(-4,-2)^2 \to \mathcal{I}_Z \to 0 \textnormal{ and }$$
$$0 \to \OO(6,1) \to V \to \OO(3,2) \to 0.$$
Next, the Kronecker map is
$$\pi: \left( \QS\right)^{[13]}  \dashrightarrow Kr_{\hom\left(E_{\frac{14}{3},\frac{5}{3}},\OO(5,2)\right)}(1,3).$$ 
Note that the dimension of this Kronecker moduli space is $4*1*3-3^2-1^2+1 = 3$.

Then, the Brill-Noether divisor is $D_V$ where $V$ is a bundle that has Chern character $(2,(9,3),12)$.
There are two types of moving curves coming from pencils of maps $\OO(-7,-3) \to K$ 
and $\OO(-7,-3) \to  \OO(-4,-2)^2$.
The restrictions these two moving curves place on $D$ are that $4b\geq 15-2a$ and $7b\geq 24-3a$.
In particular, $X_{\frac{9}{2},\frac{3}{2}}$ is dual to both moving curves, $X_{\frac{7}{2},2}$ is dual to the first moving curve, and $X_{\frac{60}{11},\frac{12}{11}}$ is dual to the second moving curve.
\end{proof}

\begin{prop}
$X_{\frac{8}{3},\frac{8}{3}}$ is an extremal ray of $\left( \QS\right)^{[13]}$.
\end{prop}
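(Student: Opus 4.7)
The plan is to mirror the worked example for $X_{\frac{12}{5},\frac{12}{5}}$ at $n=11$, executing the four-step algorithm used throughout this section.

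\textbf{Step 1 (Extremal controlling pair).} By direct search over the controlling exceptionals lying on $Q_\xi$, the relevant extremal pair is $\{\OO(-3,-2),\OO(-2,-3)\}$. A direct Riemann--Roch computation gives
$$\chi(\OO(-3,-2),\mathcal{I}_Z) = \chi(\OO(-2,-3),\mathcal{I}_Z) = (1+3)(1+2)-13 = -1,$$
so both Euler characteristics are negative and we are in the negative-type setting of Thm.~\ref{thm:resolution-neg neg}.

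\textbf{Step 2 (Coil and resolution).} Taking the minimally ranked right completion pair of $(\OO(2,3),\OO(3,2))$ with $F_0 = \OO(4,4)$ and $F_{-1} = \OO(3,3)$ yields the coil
$$\{\OO(-4,-5),\OO(-5,-4),\OO(-4,-4),\OO(-3,-3)\}.$$
Applying the generalized Beilinson spectral sequence and solving the resulting system of Chern character equations (rank, both coordinates of $c_1$, and $\ch_2$) forces $m_3=m_2=1$, $m_1=0$, $m_0=3$, so the general $\mathcal{I}_Z$ has the resolution
$$0 \to \OO(-4,-5)\osum \OO(-5,-4) \to \OO(-3,-3)^3 \to \mathcal{I}_Z \to 0.$$
One checks: rank $-1-1+3=1$, $c_1 = (4+5-9,\,5+4-9) = (0,0)$, and $\ch_2 = -20-20+27 = -13$.

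\textbf{Step 3 (Brill--Noether divisor).} The primary orthogonal Chern character is $\xi^+ = (3,(8,8),20)$, with $\Delta = \tfrac{4}{9} = \tfrac{1}{2}(1-\tfrac{1}{9})$, so $V = E_{\frac{8}{3},\frac{8}{3}}$ is the rank-three exceptional bundle of that slope. The integer solutions of $\chi(V\otimes\OO(a,b))=0$ reduce to $(11+3a)(11+3b)=4$, giving exactly $(a,b)\in\{(-3,-3),(-4,-5),(-5,-4)\}$---precisely the three line bundles appearing in the resolution above. Hence $V$ is cohomologically orthogonal to each factor of the resolution (non-specialty following from standard slope considerations), so cohomologically orthogonal to the general $\mathcal{I}_Z$, and Prop.~\ref{prop:brillnoether} yields the effective Brill--Noether divisor $D_V$ with class $X_{\frac{8}{3},\frac{8}{3}}$.

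\textbf{Step 4 (Dual moving curves).} Since $V$ is exceptional, the associated Kronecker moduli space is a single point, so I would produce two moving curves directly from the resolution by varying pencils of sheaf maps: the first inside $\Hom(\OO(-4,-5),\OO(-3,-3)) = H^0(\OO(1,2))$ and the second inside $\Hom(\OO(-5,-4),\OO(-3,-3)) = H^0(\OO(2,1))$, with the complementary map fixed. By the reflectional symmetry of the construction under swapping the two rulings of $\QS$, the two curves yield restrictions on an effective divisor $D = aH_1+bH_2-\tfrac{c}{2}B$ that are mirror images of each other, and their common intersection on the diagonal $a=b$ is precisely the ray spanned by $X_{\frac{8}{3},\frac{8}{3}}$. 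Since the two classes in $N_1((\QS)^{[13]})$ are distinct, $X_{\frac{8}{3},\frac{8}{3}}$ spans an extremal ray.

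The main obstacle is verifying the technical hypotheses of Def.~\ref{def:neg extremal pair} for this pair---especially global generation of the relevant $\sheafhom$ bundles and the prioritary condition on the sheaves fitting in the exact triangle---together with computing the intersection numbers of each pencil against $H_1$, $H_2$, and $B$ precisely enough to confirm linear independence of the two curve classes, thereby ruling out that $X_{\frac{8}{3},\frac{8}{3}}$ only lies on a higher-dimensional face rather than at an extremal ray.
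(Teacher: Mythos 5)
Your proposal follows the paper's proof essentially verbatim: the same extremal pair $\{\OO(-2,-3),\OO(-3,-2)\}$ with both Euler characteristics $-1$, the same coil $\{\OO(-4,-5),\OO(-5,-4),\OO(-4,-4),\OO(-3,-3)\}$, the same resolution $0 \to \OO(-4,-5)\osum\OO(-5,-4) \to \OO(-3,-3)^3 \to \mathcal{I}_Z \to 0$ with no Kronecker map, the same exceptional $V = E_{\frac{8}{3},\frac{8}{3}}$, and the same two pencils of maps to $\OO(-3,-3)^3$ as dual moving curves. The one item you flag as unfinished (distinctness of the two curve classes) is exactly what the paper settles by computing the explicit restrictions $5b \geq 24-4a$ and $4b \geq 24-5a$, which have different slopes and meet precisely at $X_{\frac{8}{3},\frac{8}{3}}$.
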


\begin{proof}
Let $Z \in \left( \QS\right)^{[13]}$ be general.
The relevant extremal pair is $\{\OO(-2,-3),\OO(-3,-2)\}$.
The resolving coil then will be $$\{\OO(-4,-5),\OO(-5,-4),\OO(-4,-4),\OO(-3,-3)\}$$ since 
$$\chi(\OO(-2,-3),\mathcal{I}_Z) =  1*1((1+0+2)(1+0+3)-0-13) = -1 \textnormal{, and}$$
$$\chi(\OO(-3,-2),\mathcal{I}_Z) = 1*1((1+0+3)(1+0+2)-0-13) = -1.$$
Then the resolution we get from the generalized Beilinson spectral sequence is 
$$0 \to \OO(-4,-5) \osum \OO(-5,-4) \to \OO(-3,-3)^3 \to \mathcal{I}_Z \to 0,$$ 
so this is no Kronecker map.

Next, the Brill-Noether divisor is $D_V$ where $V$ is the exceptional bundle $E_{\frac{8}{3},\frac{8}{3}}.$ 
There are two types of moving curves coming from pencils of maps $\OO(-4,-5) \to \OO(-3,-3)^3$ and $\OO(-5,-4) \to  \OO(-3,-3)^3$.
The restrictions these two moving curves place on $D$ are that $5b\geq 24-4a$ and $4b\geq 24-5a$.
In particular, $X_{\frac{8}{3},\frac{8}{3}}$ is dual to both moving curves, $X_{\frac{7}{2},2}$ is dual to the first moving curve, and $X_{2,\frac{7}{2}}$ is dual to the second moving curve.
\end{proof}

\begin{prop}
$X_{\frac{10}{3},\frac{7}{3}}$ is an extremal ray of $\left( \QS\right)^{[14]}$.
\end{prop}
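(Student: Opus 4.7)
The proof will follow the same four-step template employed for the other non-generic extremal rays in the table above. The target Brill-Noether divisor is $D_V$ where $V = E_{\frac{10}{3},\frac{7}{3}}$ is the unique exceptional bundle of rank $3$ and slope $\left(\frac{10}{3},\frac{7}{3}\right)$; since $\Delta(V) = \frac{1}{2}(1-\frac{1}{9}) = \frac{4}{9}$, a direct Hirzebruch--Riemann--Roch computation gives $\chi(V\otimes \mathcal{I}_Z)=3(P(\frac{10}{3},\frac{7}{3})-\frac{4}{9}-14)=0$, and Prop.~\ref{prop:brillnoether} then identifies the class of $D_V$ with $X_{\frac{10}{3},\frac{7}{3}}$.

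First, I would take the extremal controlling pair to be $\{\OO(-4,-2),\OO(-3,-3)\}$: these are the two exceptional line bundles of slope closest to $V$ that are cohomologically orthogonal to it (one checks $\chi(V(-4,-2)) = 3(\frac{1}{3}\cdot\frac{4}{3}-\frac{4}{9}) = 0$ and similarly $\chi(V(-3,-3)) = 0$), and
$$\chi(\OO(-4,-2),\mathcal{I}_Z) = 5\cdot 3-14 = 1, \qquad \chi(\OO(-3,-3),\mathcal{I}_Z) = 4\cdot 4-14 = 2,$$
placing us in the positive type regime of Thm.~\ref{thm:resolution-pos pos}. I would then complete the pair $(\OO(4,2),\OO(3,3))$ to a coil via its minimally ranked right completion $(F_0,F_1)$ and apply Thm.~\ref{thm:specseq}, producing a Beilinson resolution of the general $\mathcal{I}_Z$ of the form
$$0 \to (F_0^*)^{m_3} \to (F_{-1}^*)^{m_2} \osum \OO(-3,-3)^{m_1} \osum \OO(-4,-2)^{m_0} \to \mathcal{I}_Z \to 0,$$
with the $m_i$ determined by the relevant Euler characteristics, whose subcomplex $(F_0^*)^{m_3}\to(F_{-1}^*)^{m_2}$ gives a dominant rational Kronecker map $\pi\colon(\QS)^{[14]}\dashrightarrow Kr_N(m_3,m_2)$ via Prop.~\ref{prop:Kron large fib}.

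The theorems of Sec.~\ref{sec: primary extremal rays of the effective cone} then apply. Cohomological orthogonality of $V$ with the general $\mathcal{I}_Z$ reduces, via the dual resolution of $V$ coming from the same coil, to the vanishing of the appropriate Hom between associated Kronecker modules, which is precisely Thm. 6.1 of \cite{CHW:CHW14}. This yields the effective Brill-Noether divisor $D_V$ of the predicted class, and extremality is then established by two linearly independent dual moving curves constructed as pencils in two distinct Hom-spaces appearing in the resolution --- typically a pencil of maps $F_0^* \to F_{-1}^*$ (when $\dim Kr_N(m_3,m_2) > 0$, so that varying the Kronecker module covers fibers of $\pi$) together with a pencil of maps $F_0^* \to \OO(-4,-2)$ or $F_0^* \to \OO(-3,-3)$. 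These two curves translate to two independent linear inequalities on $D = aH_1+bH_2-c\frac{B}{2}$, both saturated by $X_{\frac{10}{3},\frac{7}{3}}$ and separating it from the two neighboring extremal rays $X_{6,1}$ and $X_{\frac{7}{3},\frac{10}{3}}$ already produced in the table.

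The main technical obstacle will be verifying the six conditions of Def.~\ref{def:pos extremal pair} for $\{\OO(-4,-2),\OO(-3,-3)\}$ --- especially the global generation of the relevant sheaf Homs and the prioritary condition on the cone in (6) --- together with confirming that $Kr_N(m_3,m_2)$ has the expected positive dimension so that $\pi$ is genuinely non-trivial (otherwise we would only have a birational map and would need the variant of Thm.~\ref{thm:eff cone neq neq} treating that case). Should the minimally ranked right completion pair involve a higher-rank exceptional bundle in place of a line bundle, the explicit shape of the resolution would be adjusted accordingly, but the overall four-step argument proceeds structurally exactly as for $X_{\frac{8}{3},\frac{8}{3}}\in(\QS)^{[13]}$ and $X_{\frac{12}{5},\frac{12}{5}}\in(\QS)^{[11]}$ worked out in detail above.
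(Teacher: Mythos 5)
Your plan follows the paper's proof essentially step for step: the same extremal pair $\{\OO(-4,-2),\OO(-3,-3)\}$ with the same Euler characteristics $\chi(\OO(-4,-2),\mathcal{I}_Z)=1$ and $\chi(\OO(-3,-3),\mathcal{I}_Z)=2$ placing you in the positive type case, the same orthogonal bundle $V=E_{\frac{10}{3},\frac{7}{3}}$ with $[D_V]=X_{\frac{10}{3},\frac{7}{3}}$, and the same strategy of exhibiting two dual moving curves as pencils in Hom-spaces of the Beilinson resolution. The one place your default expectation diverges from what actually happens is the Kronecker step: the spectral sequence here yields
$$0 \to \OO(-5,-4)^2 \to \OO(-4,-2) \osum \OO(-3,-3)^2 \to \mathcal{I}_Z \to 0,$$
so the exponent on $F_{-1}^*=\OO(-4,-3)$ vanishes and there is no Kronecker fibration at all — this is not the birational case nor a variant of Thm.~\ref{thm:eff cone neq neq}, but the situation of Thm.~\ref{thm:eff cone= =}, where the two moving curves come directly from pencils of maps $\OO(-5,-4)^2\to\OO(-4,-2)$ and $\OO(-5,-4)^2\to\OO(-3,-3)^2$, giving the inequalities $3b\geq 17-3a$ and $4b\geq 16-2a$, both saturated by $X_{\frac{10}{3},\frac{7}{3}}$. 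Since you explicitly flagged that the Kronecker space might degenerate and pointed to the $X_{\frac{8}{3},\frac{8}{3}}$ computation on $(\QS)^{[13]}$ as the structural model — which is exactly the right one — the proposal is sound; just correct the description of the fallback case and note that the completion pair here is the pair of line bundles $(\OO(-5,-4),\OO(-4,-3))$, of which only the first survives in the resolution.
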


\begin{proof}
Let $Z \in \left( \QS\right)^{[14]}$ be general.
The relevant extremal pair is $\{\OO(-3,-3),\OO(-4,-2)\}$.
The resolving coil then will be $$\{\OO(-5,-4),\OO(-4,-3),\OO(-4,-2),\OO(-3,-3)\}$$ since 
$$\chi(\OO(-3,-3),\mathcal{I}_Z) =  1*1((1+0+3)(1+0+3)-0-14) = 2 \textnormal{, and}$$
$$\chi(\OO(-4,-2),\mathcal{I}_Z) = 1*1((1+0+4)(1+0+2)-0-14) = 1.$$
Then the resolution we get from the generalized Beilinson spectral sequence is 
$$0 \to \OO(-5,-4)^2 \to \OO(-4,-2) \osum \OO(-3,-3)^2 \to \mathcal{I}_Z \to 0,$$
so this is no Kronecker map.

Next, the Brill-Noether divisor is $D_V$ where $V$ is the exceptional bundle $E_{\frac{10}{3},\frac{7}{3}}.$ 
There are two types of moving curves coming from pencils of maps $\OO(-5,-4)^2 \to \OO(-4,-2)$ and $\OO(-5,-4)^2 \to  \OO(-3,-3)^2$.
The restriction these two moving curves place on $D$ are that $3b\geq 17-3a$ and $4b\geq 16-2a$.
In particular, $X_{\frac{10}{3},\frac{7}{3}}$ is dual to both moving curves, $X_{\frac{7}{3},\frac{10}{3}}$ is dual to the first moving curve, and $X_{6,1}$ is dual to the second moving curve.
\end{proof}

\subsection{A Rank Two Example}
Let $\xi = \left(\textnormal{log}(2),(\frac{1}{2},0),2 \right)$
Then, we can find that $\{\OO(-1,2),\OO(0,1)\}$ is an extremal pair for $M(\xi)$.
It gives the resolution 
$$0 \to \OO(-1,-4) \to \OO(0,-2) \osum \OO(0,-1)^2 \to U \to 0$$
where $U\in M(\xi)$ is general.
This gives that the divisor $D_{\OO(-1,3)}$ lies on an edge of the effective cone.
In this case, we can actually do two dimension counts to see that varying either map gives a moving curve dual to $D_{\OO(-1,3)}$ so it in fact spans an extremal ray of the effective cone.

\bibliographystyle{alphanum}
\bibliography{fullbibliography}

\newcommand{\etalchar}[1]{$^{#1}$}
\begin{thebibliography}{{Mac}2}

\bibitem[AB]{AB:AB13}
D.~{A}rcara and A.~{B}ertram.
\newblock Bridgeland-stable moduli spaces for {$K$}-trivial surfaces.
\newblock {\em J. Eur. Math. Soc. (JEMS)}, 15(1):1--38, 2013.
\newblock With an appendix by Max Lieblich.

\bibitem[ABCH]{ABCH:ABCH13}
D.~{A}rcara, A.~{B}ertram, I.~{C}oskun, and J.~{H}uizenga.
\newblock {The minimal model program for the Hilbert scheme of points on
  $\PP^2$ and Bridgeland Stability}.
\newblock {\em Adv. Math}, 235:580--626, 2013.

\bibitem[{A}be]{A:ATBD}
T.~{A}be.
\newblock Semistable sheaves with symmetric {$c_1$} on a quadric surface.
\newblock {\em Preprint}, 2016.

\bibitem[{B}al]{Ba:Ba87}
E.~{B}allico.
\newblock {On moduli of vector bundles on rational surfaces}.
\newblock {\em {Arch. Math.}}, 49:267--272, 1987.

\bibitem[BBMT]{BBMT:BBMT14}
A.~{B}ayer, A.~{B}ertram, E.~{M}acr{\`{\i}}, and Y.~{T}oda.
\newblock Bridgeland stability conditions of threefolds {II}: {A}n application
  to {F}ujita's conjecture.
\newblock {\em J. Algebraic Geom.}, 23(4):693--710, 2014.

\bibitem[BC]{BC:BC13}
A.~{B}ertram and I.~{C}oskun.
\newblock {The birational geometry of the Hilbert scheme of points on
  surfaces}.
\newblock {\em Birational geometry, rational curves and arithmetic}, {Band
  453}:15--55, 2013.

\bibitem[BCHM]{BCHM:BCHM10}
C.~{B}irkar, P.~{C}ascini, C.~{H}acon, and J.~{M}c{K}ernan.
\newblock {Existence of minimal models for varieties of log general type}.
\newblock {\em J. Amer. Math. Soc.}, 23:405--468, 2010.

\bibitem[BCZ]{BCZ:BCZ16}
A.~{B}ayer, A.~{C}raw, and Z.~{Z}hang.
\newblock Nef divisors for moduli spaces of complexes with compact support.
\newblock {\em arXiv:1602.00863}, 2016.

\bibitem[BHL{\etalchar{+}}]{BHLRSWZ:BHLRSWZTBD}
B.~{B}olognese, J.~{H}uizenga, Y.~{L}in, E.~{R}iedl, B.~{S}chmidt, M.~{W}oolf,
  and X.~{Z}hao.
\newblock {Nef cones of Hilbert schemes of points on surfaces}.
\newblock {\em {arXiv}}, ({preprint arXiv:1509.04722}), 2015.

\bibitem[BM1]{BM2:BM14-2}
A.~{B}ayer and E.~{Macr\`i}.
\newblock {MMP for moduli of sheaves on K3s via wall-crossing: nef and movable
  cones, Lagrangian fibrations}.
\newblock {\em Inventiones mathematicae}, 198(3):505--590, 2014.

\bibitem[BM2]{BM:BM14}
A.~{B}ayer and E.~{M}acr{\`i}.
\newblock {Projectivity and birational geometry of Bridgeland moduli spaces}.
\newblock {\em J. Amer. Math. Soc.}, 27:707--752, 2014.

\bibitem[BM3]{BeMa:BeMa15}
A.~{B}ertram and C.~{M}artinez.
\newblock Change of polarization for moduli of sheaves on surfaces as
  {B}ridgeland wall-crossing.
\newblock {\em arXiv:1505.07091}, 2015.

\bibitem[BMS]{BMS:BMSTBD}
A.~{B}ayer, E.~{M}acr{\`i}, and P.~{S}tellari.
\newblock {The space of stability conditions on Abelian threefolds and on some
  Calabi-Yau threefolds}.
\newblock {\em {arXiv}}, ({preprint arXiv:1410.1585}), 2014.

\bibitem[BMT]{BMT:BMT14}
A.~{B}ayer, E.~Macr{\`{\i}}, and Y.~{To}da.
\newblock Bridgeland stability conditions on threefolds {I}:
  {B}ogomolov-{G}ieseker type inequalities.
\newblock {\em J. Algebraic Geom.}, 23(1):117--163, 2014.

\bibitem[BMW]{BMW:BMW14}
A.~{B}ertram, C.~{M}artinez, and J.~{W}ang.
\newblock The birational geometry of moduli spaces of sheaves on the projective
  plane.
\newblock {\em Geom. Dedicata}, 173:37--64, 2014.

\bibitem[{Bo}u]{Bo:Bo87}
J.F. {Bo}utot.
\newblock {Singularit\'es rationnelles et quotients par les groupes
  r\'eductifs}.
\newblock {\em Invent. Math.}, 88(1):65--68, 1987.

\bibitem[{B}ri1]{B:B07}
T.~{B}ridgeland.
\newblock {Stability conditions on triangulated categories}.
\newblock {\em {Annals of Mathematics}}, 166:317--345, 2007.

\bibitem[{B}ri2]{B2:B208}
T.~{B}ridgeland.
\newblock Stability conditions on {$K3$} surfaces.
\newblock {\em Duke Math. J.}, 141(2):241--291, 2008.

\bibitem[CC]{CC:CC15}
J.~{C}hoi and K.~{C}hung.
\newblock The geometry of the moduli space of one-dimensional sheaves.
\newblock {\em Sci. China Math.}, 58(3):487--500, 2015.

\bibitem[CH1]{CH2:CH14}
I.~{C}oskun and J.~{H}uizenga.
\newblock The birational geometry of the moduli spaces of sheaves on {$\PP^2$}.
\newblock {\em Proceedings of the G\"okova Geometry-Topology Conference 2014},
  pages 114--155, 2014.

\bibitem[CH2]{CH3:CH14-2}
I.~{C}oskun and J.~{H}uizenga.
\newblock Interpolation, {B}ridgeland stability and monomial schemes in the
  plane.
\newblock {\em J. Math. Pures Appl. (9)}, 102(5):930--971, 2014.

\bibitem[CH3]{CH4:CH15}
I.~{C}oskun and J.~{H}uizenga.
\newblock The nef cone of the moduli space of sheaves and strong {B}ogomolov
  inequalities.
\newblock {\em arXiv:1512.02661}, 2015.

\bibitem[CH4]{CH:CHTBD}
I.~{C}oskun and J.~{H}uizenga.
\newblock {The ample cone of moduli spaces of sheaves on the plane}.
\newblock {\em Algebraic Geometry}, To appear.

\bibitem[CHP]{CHP:CHP16}
I.~{C}oskun, D.~{H}yeon, and J.~{P}ark.
\newblock Castelnuovo-mumford regularity and {B}ridgeland stability of points
  in the projective plane.
\newblock {\em Preprint}.

\bibitem[CHW]{CHW:CHW14}
I.~{C}oskun, J.~{H}uizenga, and M.~{W}oolf.
\newblock {The effective cone of the moduli space of sheaves on the plane}.
\newblock {\em {J}ournal of the {E}uropean {M}athematical {S}ociety}, To
  appear.

\bibitem[CZ1]{LZ2:LZTBD}
L.~{C}hunyi and X.~{Z}hao.
\newblock Birational models of moduli spaces of coherent sheaves on the
  projective plane.
\newblock {\em In Preparation}.

\bibitem[CZ2]{LZ:LZ13}
L.~{C}hunyi and X.~{Z}hao.
\newblock The {MMP} for deformations of {H}ilbert schemes of points on the
  projective plane.
\newblock {\em arXiv:1312.1748}, 2013.

\bibitem[DLP]{DLP:DLP85}
J.-M. {D}r{\'e}zet and J.~{L}e {P}otier.
\newblock {Fibr\'es stabl\'es et fibr\'es exceptionnels sur $\PP_2$}.
\newblock {\em Ann. Ec. Norm. Sup.}, 18:193--244, 1985.

\bibitem[DN]{DN:DN89}
J.-M. {D}r{\'e}zet and M.S. {N}arasimhan.
\newblock {Groupe de Picard des vari\'et\'es de modules de faisceaux
  semi-stables sur les courbes alg\'ebriques}.
\newblock {\em {Invent. Math.}}, 97:53--94, 1989.

\bibitem[{D}r{\'e}1]{D:D87}
J.-M. {D}r{\'e}zet.
\newblock {Fibr\'es exceptionnels et vari\'eti\'es de modules de faisceaux
  semi-stables sur $\PP_2(\CC)$}.
\newblock {\em {J. reine angew. Math.}}, {Band 380}:14--58, 1987.

\bibitem[{D}r{\'e}2]{D2:D91}
J.-M. {D}r{\'e}zet.
\newblock {Points non factoriels des vari\'et\'es de modules de faisceaux
  semi-stables sur une surface rationelles}.
\newblock {\em {Journal f\"ur die reine und angewandte Mathematik}},
  413:99--126, 1991.

\bibitem[{Fo}g]{Fo:Fo73}
J.~{Fo}garty.
\newblock Algebraic families on an algebraic surface. {II}. {T}he {P}icard
  scheme of the punctual {H}ilbert scheme.
\newblock {\em Amer. J. Math.}, 95:660--687, 1973.

\bibitem[{Gi}e]{Gi:Gi77}
D.~{Gi}eseker.
\newblock {On the Moduli of Vector Bundles on an Algebraic Surface}.
\newblock {\em Annals of Mathematics}, 106(1):pp. 45--60, 1977.

\bibitem[{G}or]{G:G89}
A.L. {G}orodentsev.
\newblock {Exceptional bundles on surfaces with a moving anti-canonical class}.
\newblock {\em {Math. Izvestiya, AMS Transl.}}, 33(1):67--83, 1989.

\bibitem[{G}{\"o}t]{Go:Go96}
L.~{G}{\"o}ttsche.
\newblock {Rationality of moduli spaces of torsion free sheaves over rational
  surfaces}.
\newblock {\em {Manuscripta Math.}}, 89:193--201, 1996.

\bibitem[{Hau}]{Hau:Hau01}
J.~{Hau}sen.
\newblock {A generalization of Mumford's Geometric Invariant Theory}.
\newblock {\em {Documenta Mathematica}}, 6:571--592, 2001.

\bibitem[HK]{HK:HK00}
Y.~{H}u and S.~{K}eel.
\newblock {Mori dream spaces and GIT}.
\newblock {\em {Mich. Math. J.}}, 48:331--348, 2000.

\bibitem[HL]{HuL:HuL10}
D.~{Hu}ybrechts and M.~{L}ehn.
\newblock {\em The geometry of moduli spaces of sheaves}.
\newblock Cambridge Mathematical Library. Cambridge University Press,
  Cambridge, second edition, 2010.

\bibitem[{H}ui1]{Hu2:Hu13}
J.~{H}uizenga.
\newblock Restrictions of {S}teiner bundles and divisors on the {H}ilbert
  scheme of points in the plane.
\newblock {\em Int. Math. Res. Not. IMRN}, (21):4829--4873, 2013.

\bibitem[{H}ui2]{Hu:Hu16}
J.~{H}uizenga.
\newblock Effective divisors on the {H}ilbert scheme of points in the plane and
  interpolation for stable bundles.
\newblock {\em J. Algebraic Geom.}, 25(1):19--75, 2016.

\bibitem[{K}ar]{K2:K94}
B.V. {K}arpov.
\newblock {Semistable sheaves on Del Pezzo surfaces and Kronecker modules}.
\newblock {\em preprint}, 1994.

\bibitem[{K}ol]{K3:K96}
J.~{K}oll{\'a}r.
\newblock {Singularities of pairs}.
\newblock {\em {arXiv}}, ({preprint arXiv:alg-geom/9601026v2}), 1996.

\bibitem[{L}az]{L:L04}
R.~{L}azarsfeld.
\newblock {\em Positivity in algebraic geometry. {I}}, volume~48 of {\em
  Ergebnisse der Mathematik und ihrer Grenzgebiete. 3. Folge. A Series of
  Modern Surveys in Mathematics [Results in Mathematics and Related Areas. 3rd
  Series. A Series of Modern Surveys in Mathematics]}.
\newblock Springer-Verlag, Berlin, 2004.
\newblock Classical setting: line bundles and linear series.

\bibitem[LP]{LP:LP97}
J.~{L}e {P}otier.
\newblock {\em Lectures on vector bundles}, volume~54 of {\em Cambridge Studies
  in Advanced Mathematics}.
\newblock Cambridge University Press, Cambridge, 1997.
\newblock Translated by A. Maciocia.

\bibitem[LQ1]{LQ:LQ94}
W.-P. {L}i and Z.~{Q}in.
\newblock {Stable vector bundles on algebraic surfaces}.
\newblock {\em {Transactions of the AMS}}, 345:833--852, 1994.

\bibitem[LQ2]{LoQi:LoQi14}
J.~{L}o and Z.~{Q}in.
\newblock Mini-walls for {B}ridgeland stability conditions on the derived
  category of sheaves over surfaces.
\newblock {\em Asian J. Math.}, 18(2):321--344, 2014.

\bibitem[{Mac}1]{Maci2:Maci}
A.~{Maci}ocia.
\newblock Computing the walls associated to {B}ridgeland stability conditions
  on projective surfaces.
\newblock {\em arXiv:1202.4587}, 2012.

\bibitem[{Mac}2]{Mac:Mac14}
E.~{Mac}r{\`{\i}}.
\newblock A generalized {B}ogomolov-{G}ieseker inequality for the
  three-dimensional projective space.
\newblock {\em Algebra Number Theory}, 8(1):173--190, 2014.

\bibitem[{Ma}r1]{Ma3:Ma75}
M.~{Ma}ruyama.
\newblock {Stable vector bundles on an algebraic surface}.
\newblock {\em {Nagoya Math. J.}}, 58:25--68, 1975.

\bibitem[{Ma}r2]{Ma:Ma77}
M.~{Ma}ruyama.
\newblock {Moduli of stables sheaves,I}.
\newblock {\em {J. Math. Kyoto Univ.}}, 17:91--126, 1977.

\bibitem[{Ma}r3]{Ma2:Ma78}
M.~{Ma}ruyama.
\newblock {Moduli of stable sheaves,II}.
\newblock {\em {J. Math. Kyoto Univ.}}, 18:577--614, 1978.

\bibitem[MM]{MM:MM13}
A.~{M}aciocia and C.~{M}eachan.
\newblock {Rank 1 Bridgeland stable moduli spaces on a principally polarized
  abelian surface}.
\newblock {\em Int. Math. Res. Not. IMRN}, (9):2054--2077, 2013.

\bibitem[MO1]{MO:MO07}
A.~{M}arian and D.~{O}prea.
\newblock Counts of maps to {G}rassmannians and intersections on the moduli
  space of bundles.
\newblock {\em J. Differential Geom.}, 76(1):155--175, 2007.

\bibitem[MO2]{MO2:MO08}
A.~{M}arian and D.~{O}prea.
\newblock A tour of theta dualities on moduli spaces of sheaves.
\newblock In {\em Curves and abelian varieties}, volume 465 of {\em Contemp.
  Math.}, pages 175--201. Amer. Math. Soc., Providence, RI, 2008.

\bibitem[{Mo}z]{Mo:Mo13}
S.~{Mo}zgovoy.
\newblock {Invariants of moduli spaces of stable sheaves on ruled surfaces}.
\newblock {\em {arXiv}}, ({preprint arXiv:1302.4134}), 2013.

\bibitem[MP]{MP:MP13}
A.~{M}aciocia and D.~{P}iyaratne.
\newblock {Fourier-Mukai Transforms and Bridgeland Stability Conditions on
  Abelian Threefolds}.
\newblock {\em arXiv}, ({preprint arXiv:1304.3887}), 2013.

\bibitem[MS]{MS:MS16}
E.~{M}acr\`i and B.~{S}chmidt.
\newblock Introduction to {B}ridgeland stability.
\newblock {\em CIMPA-ICTP-Mexico research school 2016}, In Preparation, 2016.

\bibitem[MW]{MW:MW97}
K.~{M}atsuki and R.~{W}entworth.
\newblock Mumford-{T}haddeus principle on the moduli space of vector bundles on
  an algebraic surface.
\newblock {\em Internat. J. Math.}, 8(1):97--148, 1997.

\bibitem[{Nu}e1]{Nu:NuTBD}
H.~{Nu}er.
\newblock {MMP via wall-crossing for Bridgeland moduli spaces on an Enriques
  surface}.
\newblock {\em In Preparation}.

\bibitem[{N}ue2]{Nu2:Nu14:}
H.~{N}uer.
\newblock Projectivity and birational geometry of {B}ridgeland moduli spaces on
  an {E}nriques surface.
\newblock {\em arXiv:1406.0908}, 2014.

\bibitem[NZ]{NZ:NZ90}
D.Yu. {N}ogin and S.K. {Z}ube.
\newblock {Computing invariants of exceptional bundles on a quadric}.
\newblock {\em Helices and vector bundles: Seminaire Rudakov}, 148:23--32,
  1990.

\bibitem[{Q}in]{Q:Q92}
Z.~{Q}in.
\newblock {Moduli of stable sheaves on ruled surfaces and their Picard group}.
\newblock {\em {J. f\"ur die reine und angew. math.}}, 433:201--219, 1992.

\bibitem[{Re}i]{Re:Re08}
M.~{Re}ineke.
\newblock Moduli of representations of quivers.
\newblock In {\em Trends in representation theory of algebras and related
  topics}, EMS Ser. Congr. Rep., pages 589--637. Eur. Math. Soc., Z\"urich,
  2008.

\bibitem[{R}ud1]{R:R89}
A.N. {R}udakov.
\newblock {Exceptional vector bundles on a quadric}.
\newblock {\em Math. USSR Izv.}, 33(1):115, 1989.

\bibitem[{R}ud2]{R2:R94}
A.N. {R}udakov.
\newblock {A description of Chern classes of semistable sheaves on a quadric
  surface}.
\newblock {\em Journal fur die reine und angewandte Mathematik}, {Band
  453}:113--136, 1994.

\bibitem[{S}ch]{S:S14}
B.~{S}chmidt.
\newblock A generalized {B}ogomolov-{G}ieseker inequality for the smooth
  quadric threefold.
\newblock {\em Bull. Lond. Math. Soc.}, 46(5):915--923, 2014.

\bibitem[{Ta}k]{Ta:Ta72}
F.~{Ta}kemoto.
\newblock {Stable vector bundles on algebraic surfaces}.
\newblock {\em Nagoya Math. J.}, 47:29--48, 1972.

\bibitem[{T}od1]{To3:To14-3}
Y.~{T}oda.
\newblock Derived category of coherent sheaves and counting invariants.
\newblock {\em arXiv:1404.3814}, 2014.

\bibitem[{To}d2]{To2:To14}
Y.~{To}da.
\newblock A note on {B}ogomolov-{G}ieseker type inequality for {C}alabi-{Y}au
  3-folds.
\newblock {\em Proc. Amer. Math. Soc.}, 142(10):3387--3394, 2014.

\bibitem[{To}d3]{To:To14}
Y.~{To}da.
\newblock {Stability conditions and birational geometry of projective
  surfaces}.
\newblock {\em Compos. Math.}, 150(10):1755--1788, 2014.

\bibitem[{T}ra]{Tr:Tr15}
R.~{T}ramel.
\newblock New stability conditions on surfaces and new {C}astelnuovo-type
  inequalities for curves on complete-intersection surfaces.
\newblock {\em PhD Thesis, University of Edinburgh}, 2015.

\bibitem[{W}al]{Wa:Wa98}
C.~{W}alter.
\newblock {Irreducibility of moduli spaces of vector bundles on birationally
  ruled surfaces}.
\newblock {\em {Algebraic Geometry: Papers Presented for the Europroj
  Converences in Catania and Barcelona}}, pages 201--211, 1998.

\bibitem[{W}oo]{Wo:Wo13}
M.~{W}oolf.
\newblock {Nef and effective cones of moduli spaces torsion free sheaves on
  $\PP^2$}.
\newblock {\em {arXiv}}, ({preprint arXiv:1305.1465}), 2013.

\bibitem[{Y}os1]{Y:Y96}
K.~{Y}oshioka.
\newblock {The Picard group of the moduli spaces of stable sheaves on a ruled
  surface}.
\newblock {\em {J. Math. Kyoto Univ.}}, 36(2):279--309, 1996.

\bibitem[{Y}os2]{Y2:Y01}
K.~{Y}oshioka.
\newblock {M}oduli spaces of stable sheaves on abelian surfaces.
\newblock {\em Mathematische Annalen}, 321(4):817--884, 2001.

\bibitem[{Y}os3]{Y4:Y12}
K.~{Y}oshioka.
\newblock Bridgeland’s stability and the positive cone of the moduli spaces
  of stable objects on an abelian surface.
\newblock {\em arXiv:1206.4838}, 2012.

\bibitem[{Y}os4]{Y5:Y14}
K.~{Y}oshioka.
\newblock Wall crossing of the moduli spaces of perverse coherent sheaves on a
  blow-up.
\newblock {\em arXiv:1411.4955}, 2014.

\end{thebibliography}
\end{document}